\documentclass[11pt]{article}

\usepackage{amsfonts}
\usepackage{amsmath,amsthm,amscd,amssymb,mathrsfs,setspace}
\usepackage{latexsym,epsf,epsfig}
\usepackage{cite}
\usepackage{color}
\usepackage[hmargin=2.6cm,vmargin=2.6cm]{geometry}
\usepackage{hyperref}
\frenchspacing

\setcounter{MaxMatrixCols}{10}

\newcommand{\ds}{\displaystyle}

\newcommand{\reals}{\mathbb{R}}

\newcommand{\cD}{\mathscr{D}}

\newcommand{\cE}{{\mathcal{E}}}

\newcommand{\mN}{\mathbb{N}}

\newcommand{\bu}{\mathbf u}

\newcommand{\bw}{\mathbf w}
\newcommand{\bF}{\mathbf F}

\newcommand{\bV}{\mathbf V}

\newcommand{\grad}{\nabla}
\newcommand{\Om}{\Omega}

\theoremstyle{plain}
\newtheorem{theorem}{Theorem}[section]
\newtheorem{lemma}[theorem]{Lemma}
\newtheorem{proposition}[theorem]{Proposition}

\newtheorem{corollary}[theorem]{Corollary}
\newtheorem{assumption}{Assumption}
\theoremstyle{remark}
\newtheorem{definition}{Definition}
\newtheorem{remark}{Remark}[section]
\numberwithin{equation}{section}
\numberwithin{theorem}{section}
\numberwithin{remark}{section}
\numberwithin{assumption}{section}
\numberwithin{condition}{section}

\begin{document}

\title{Nonlinear Quasi-static Poroelasticity}
\author{Lorena Bociu\footnote{2311 Stinson Dr., North Carolina State University, Raleigh, NC, 27607; {\em lvbociu@ncstate.edu}}  \hskip 2cm Justin T. Webster\footnote{1000 Hilltop Dr., University of Maryland, Baltimore County, Baltimore, MD, 21250;~ {\em websterj@umbc.edu}} }

\maketitle

\begin{abstract}
\noindent  We analyze a quasi-static Biot system of poroelasticity for both compressible and incompressible constituents. The main feature of this model is a nonlinear coupling of pressure and dilation through the system's permeability tensor. Such a model has been analyzed previously from the point of view of constructing weak solutions through a fully discretized approach. In this treatment, we consider simplified Dirichlet type boundary conditions in both the elastic displacement and pressure variables and give a full treatment of weak solutions. Our construction of weak solutions for the nonlinear problem is based on a priori estimates, a requisite feature in addressing the nonlinearity. We utilize a spatial semi-discretization and employ a multi-valued fixed point argument for a clear construction of weak solutions. We also provide regularity criteria for uniqueness of solutions. 
\vskip.25cm

\noindent {\bf Keywords}: {poroelasticity, nonlinear coupling, implicit evolution, fixed point methods}
\vskip.25cm
\noindent
{\em 2010 AMS}: 74F10, 76S05, 35M13, 35A01, 35B65, 35Q86, 35Q92
\vskip.4cm
\noindent Acknowledged Support: L. Bociu was partially supported by NSF-DMS 1555062 (CAREER). J.T. Webster was partially supported by NSF-DMS 1907620.
\end{abstract}

\section{Introduction}

Poroelasticity refers to (Darcy) fluid flow within a deformable, porous medium. The development of this field has been inspired by geophysics and petroleum engineering problems, in particular, reservoir, environmental, and earthquake engineering. Mathematically, the subject was initiated in  the 1D work of Terzaghi in the 1920s, and the groundbreaking consolidation theory developed by Biot in the 1940--50s \cite{biot}.  It was Biot's work which instigated the rapid development and progress of this field. The relevant literature is now abundant, and we only list here representative fundamental treatments: \cite{barry, dcheng, coussy, philwheel, frenchpaper, zenisek, show1, cao}.  
In all of the works motivated by geophysical applications, the poroelastic structures considered are  soil and/or rock  (for instance, in most of the aforementioned references). However, cartilages, bones, as well as brain, heart, and liver tissue etc., are also poroelastic structures. Therefore, the theory of poroelasticity can be used and applied to fluid flows inside cartilages, bones, and engineered tissue scaffolds, as well as in perfusion in the optic nerve head---see \cite{bgsw, MBE, MBE2} and references and discussion therein.

From a mathematical point of view, poroelastic systems constitute a coupled system of a (possibly degenerate) parabolic fluid pressure and a hyperbolic (inertial) or elliptic (quasi-static) system of elasticity for the displacements of the porous matrix containing the fluid. The saturated elastic matrix is modeled through homogenization \cite{frenchpaper,biot}, in the sense that the pressure and displacement are distributed quantities throughout the physical domain. In this treatment, we focus on poroelastic models with specific applications in biomechanics (in contrast to those tailored to geomechanical systems). Thus we work under the assumptions of {\em full saturation, negligible inertia, small deformations, and (possibly) compressible mixture components}. The applications of interest give rise to a permeability taken as a nonlinear function of the so called fluid content (a particular linear combination of pressure and dilation). This type of nonlinear coupling introduces a variety of complications detailed below, and, in particular, destroys the monotone nature of the problem. 

Such a nonlinear poroelastic model was first considered---from a mathematical point of view---in \cite{cao}, and shortly after in \cite{bgsw}.\footnote{We note that in \cite{show2} a nonlinear version of the Biot problem is considered, but the structure of the nonlinearity there is monotone in nature and different from the physical nonlinearity presented here for biological applications.} The former reference \cite{cao} and sequel \cite{cao2} focus on the {\it compressible Biot model} and construct  
weak solutions through a full spatio-temporal discretization,  in the mathematically simplified framework of homogeneous Dirichlet boundary conditions for both fluid pressure and solid displacement.  Those references take the earlier linear theory \cite{frenchpaper, show1,showmono} as their primary motivation, and use Brouwer's fixed point theorem at the level of the fully discretized problem. The latter reference \cite{bgsw} focuses on {\it Biot models with incompressible constituents} and constructs weak solutions (also using discretizations in both time and space \cite{zenisek}) for both poroelastic and poro-viscoelastic systems with non-homogeneous, mixed boundary conditions that are  physically relevant to opthalmological applications. A key theme in this latter work \cite{bgsw} is the careful analysis of the requisite boundary and source regularity for the construction of weak solutions, as this aspect is crucial in understanding the
mechanisms leading to tissue damage in the optic nerve head, and consequent vision loss possibly associated
with glaucoma. 
Both \cite{cao,bgsw} obtain a priori estimates in the fully discretized setting, and much of the challenge lies in adequately addressing the 
nonlinear and non-monotone coupling to obtain a weak solution in the limit. The reference \cite{cao} provides a straightforward regularity criterion for uniqueness of solutions, but does not actually consider smooth solutions, nor address the permissibility of multipliers used to obtain estimates.

In this treatment, we provide  a careful mathematical construction of weak solutions using semi-discretization in space, in the setting of fully homogeneous boundary conditions. 
One primary goal is to clearly elucidate the challenges introduced into the Biot problem by the inclusion of non-monotone nonlinear coupling. We also include a novel, sharper uniqueness criterion for solutions of sufficient smoothness. Our approach is based on a priori estimates for the time-dependent linearization, from which we construct a fixed point correspondence. An interesting feature of this approach is that we cannot appeal to uniqueness of solutions for the aforementioned linear problem, as we do not satisfy requisite hypotheses for established theories (e.g., that in \cite{showmono}). Indeed, weak solutions themselves are not permissible test functions, presenting a great hurdle in the analysis. To address this issue, we utilize a multi-valued fixed point approach, along with a careful construction of the correspondence between the permeability function and the resulting fluid content.

{\bf In summary}: This paper addresses the existence of weak solutions to a quasilinear Biot system, based on a fixed point approach that circumvents the lack of monotonicity in the system's nonlinear coupling.
We believe that the construction given here is quite natural, and illustrative of the complexities in the analysis that are introduced by the presence of nonlinearity and its interaction with the boundary conditions.

\subsection{PDE Model}

Let $\Omega$ be an open, bounded subset of $\mathbb{R}^3$ representing the spatial domain occupied by the (fully saturate) fluid-solid mixture, with smooth boundary $\Gamma = \partial \Omega$. Let $\mathbf{x}$ be the position vector of each point in the body with respect to a fixed Cartesian reference frame. The symbol $\mathbf n$ will be used to denote the unit outward normal vector to $\Omega$. 
Let 
 $V_f(\mathbf{x},t)$ be the volume occupied by the fluid component in a representative  volume $V(\mathbf{x},t)$ element centered at $\mathbf{x} \in \Omega$ at time $t$. Then the porosity $\phi$ and the fluid content $\zeta$ are given by 
$\ds \phi(\mathbf{x},t) = V_f(\mathbf{x},t)/V(\mathbf{x},t)
\quad \mbox{and} \quad 
\zeta(\mathbf{x},t) = \phi(\mathbf{x},t)-\phi_0(\mathbf{x})$, 
where $\phi_0$ is a baseline (local) value for the porosity.
\vskip.1cm

\noindent \underline{Balance Equations:} Under the assumptions of small deformations, full saturation of the mixture,
and negligible inertia, we can write
the balance of linear momentum for the mixture and the balance of mass for the fluid component as
\begin{equation}\label{eq:system_1}
\partial_t\zeta + \nabla\cdot\mathbf{v}=S(\mathbf x, t) 
\quad \mbox{and}\quad 
-\nabla\cdot\mathbf T + \mathbf F(\mathbf x, t) = \mathbf 0 \quad \mbox{in}\; \Omega\times (0,T),
\end{equation}
where $\mathbf T$ is the total stress, 
$\mathbf{v}$ is the discharge velocity (also commonly called the {\em Darcy velocity} \cite{show1}),
$\mathbf{F}$ is a body force per unit of volume, and
$S$ is a net volumetric fluid production rate. \vskip.1cm

\noindent \underline{Constitutive Equations:}
We complement the balance equations with the constitutive equations:
\vskip.1cm
\noindent The {\it total stress} of the mixture is given by
\begin{equation}
 \label{eq:stress}
\mathbf T = \mathbf T_e - \alpha p \mathbf{I} = 2\mu_e \varepsilon(\mathbf{u}) + \lambda_e (\nabla \cdot \mathbf{u})\, \mathbf{I} - \alpha p \mathbf{I}, 
\end{equation}
where $\mathbf{u}$ is the solid displacement, the symmetrized gradient ~
$\varepsilon(\mathbf{u}) = (\nabla \mathbf u + \nabla \mathbf u^T)/2$~ gives the strain tensor,  $\alpha$ is the Biot-Willis constant,
$p$ is the Darcy fluid pressure, $\mathbf{I}$ is the identity tensor, and $\lambda_e$ and $\mu_e$ are the  elasticity parameters.

\vskip.2cm
\noindent The \textit{discharge (Darcy) velocity} has the following formula via Darcy's law \cite{show1}:
\begin{equation}
\label{eq:vel_darcy}
\mathbf{v} = - k(\phi)\mathbf{I} \nabla  p.
\\
\end{equation} 
The particular form of the relationship between the permeability $k$ and the porosity $\phi$ depends on the geometrical architecture of the pores in the elastic matrix and the properties of the fluid. 
We allow for $k$ to be a general continuous function, assuming only that it is bounded above and below (as discussed below, in Assumption \ref{Assumpk}, and consistent with \cite{cao,bgsw}).
\vskip.2cm
\noindent The \textit{fluid content} is given by 
\begin{equation}
\label{eq:fluid_increment}\
 \zeta = c_0p+\alpha \nabla \cdot \mathbf{u},
\end{equation}
where $c_0$ is the constrained specific storage coefficient \cite{dcheng,show1,frenchpaper}. Using the relation between porosity and fluid content, as well as the definition of permeability, we can see that permeability in the system depends nonlinearly on the fluid content. In the special case of incompressible constituents, due to the fact that the constrained storage coefficient $c_0 = 0$ and $\alpha = 1$, the permeability becomes a nonlinear function of dilation alone. This is the scenario that is specifically  addressed in \cite{bgsw}. \vskip.2cm

\noindent \underline{Boundary Conditions:}  We consider homogeneous Dirichlet boundary conditions for both the structural displacement $\bu$  (and hence $\bu_t$, when defined) and the fluid pressure $p$ \begin{equation}
\mathbf u = \mathbf 0, \quad p =0 \quad \mbox{on}\; \Gamma\,.
\end{equation}
This choice is 
in line with the model considered in \cite{cao,owc}. 
In our previous work \cite{bgsw}, we considered complex physical configurations, incorporating both nonhomogeneous Dirichlet and Neumann boundary conditions for the elastic displacement and fluid pressure. These physically motivated mixed boundary conditions could be incorporated here, and this is the subject of future work.
\vskip.2cm
\noindent \underline{Initial Conditions:} Initial conditions are to be specified for the fluid content, $\zeta$, as it is the only term which appears under the temporal integration in the mass-balance equation \eqref{eq:system_1}:
\hskip4cm $\zeta(\mathbf{x}, 0) = d_0  \quad \mbox{in}\;\; \Omega.$

\begin{remark} \label{dataremark} In discussing {\em various notions of solutions} (as in Section \ref{Uniqueness}), one can find the requirement that $d_0=\zeta(0)=[c_0p+\alpha \nabla \cdot \bu](0)$  for some $\bu(t=0)=\bu_0$ specified independently, taken in an appropriate space (see \cite{zenisek}, as well as \cite{bgsw}). In these works, a different construction for solutions is utilized.  We do note that for the linear case, in the most general ``weak" setting \cite{show1}, only $d_0$ should be needed. In this weak situation, the  construction is done independent of a priori estimates obtained in standard Hilbert spaces such as $L^2(\Omega)$ and $H_0^1(\Omega)$. In \cite{owc}, solutions are also constructed in the linear case, but initial data is taken to be smoother than ``finite energy" considerations require. 
\end{remark}

\noindent \underline{PDE System:} 
To summarize, below is the nonlinear system under consideration:
\begin{eqnarray} 
&- \nabla\cdot [2\mu_e \varepsilon(\mathbf{u}) + \lambda_e (\nabla \cdot \mathbf{u})\, \mathbf{I} - \alpha p \mathbf{I}]= \bF \hspace{.4cm} &\text{in} ~~~\Omega\times(0,T) \label{1}\label{Eq:elasticity}\\ 
& \zeta_t-\nabla\cdot \big[k(\zeta)\nabla p\big]=S&\text{in} ~~~\Omega\times(0,T)  \label{2}\label{Eq:pressure}\\
& \zeta = c_0p+\alpha \nabla \cdot \bu & \text{in} ~~~\Omega\times(0,T) \\
& \bu= {\mathbf 0} ~~\text{and} ~~p = 0 &\text{on} ~~~\Gamma \times(0,T)\\
&\zeta(0) = d_0 &\text{in} ~~~\Omega, \mbox{ for } \; t=0
 \label{3}\label{Eq:IC}
\hspace{.5cm}
\end{eqnarray}

Note that \eqref{Eq:elasticity} can be written equivalently  as 
$$-\mu \Delta \mathbf{u} - (\lambda + \mu) \grad(\grad \cdot \mathbf{u}) + \alpha \grad p = \bF,
$$
where the Laplacian above is interpreted component-wise.

\begin{assumption}\label{Assumpk}[Bounds on the Permeability Function]
We assume that the permeability function $k: \reals \to \reals$ is continuous and that there exist constants $k_1 > 0$ and $k_2 >0$ s.t. 
$$0 < k_1 \leq k(x) \leq k_2, \ \ \forall x \in \reals.$$
\end{assumption}
\begin{remark} With a slight abuse of notation, we denote by
$k(\Psi(\cdot,t))$ the Nemytskii operator associated with $k$. Using our assumptions on the function $k$, and the theory of superposition operators \cite{RR, Tr}, we have that the operator $k$ is bounded and continuous from $L^2(\Omega \times (0,T))$ into $L^2(\Omega \times (0,T))$. 

In order to obtain uniqueness of solution in Section \ref{Uniqueness}, we will further assume that $k$ is a globally Lipschitz function, i.e., $k \in Lip(\mathbb R)$. \end{remark}

\begin{assumption}\label{Assumpcoeff}[Other Assumptions] In what follows, for simplicity, we set to unity non-essential (from the mathematical point of view) parameters. This is to say, we take $\lambda_e=\mu_e=\alpha=1$. The parameter $c_0$ is retained as is, with no dependence on other parameters,  as we will take $c_0 \searrow 0$ in the construction of weak solutions for the case of incompressible constituents.
\end{assumption}

\section{Main Results}
\subsection{Notation, Function Spaces, and Conventions}
We make the following conventions for the rest of the paper. Norms $\|\cdot\|_D$ are taken to be $L^2(D)$ for a domain $D$. Inner products in $L^2(D)$ are written as $(\cdot,\cdot)_D$, where the subscript will be omitted when the context is clear.  The standard Sobolev space of order $s$ defined on a domain $D$ \cite{kesavan} will be denoted by $ H^s(D)$, with $H^s_0(D)$ denoting the closure of $C_0^{\infty}(D)$ in the $H^s(D)$ norm
(which we denote by $\|\cdot\|_{H^s(D)}$ or $\|\cdot\|_{s}$).   Vector valued spaces will be denoted as $\mathbf L^2(\Omega) \equiv [L^2(\Omega)]^n$ and $\mathbf H^s(\Omega) = [H^s(\Omega)]^n$. We make use of the standard notation for the trace of functions $\gamma: H^1(D) \to H^{1/2}(\partial D)$ which generalizes restriction to a lower dimensional manifold. We will make use of the spaces $L^2(0,T;U)$ and $H^s(0,T;U)$, where $U$ is a Banach space. These norms (and associated inner products) will be denoted with the appropriate subscript, e.g., $||\cdot||_{L^2(0,T;U)}$. We utilize the Frobenius scalar product for tensors with the Einstein summation convention:
$$(\mathbf A, \mathbf B) = \int_{\Omega} (A_{ij}B_{ij})d\Omega,$$ 
sometimes also denoted by $\int_{\Omega}\mathbf A :\mathbf B~d\Omega$. Notice that, when $\mathbf A=\mathbf B$, we write 
$$(\mathbf A, \mathbf A) = \int_{\Omega} \mathbf A : \mathbf A ~d\Omega=\sum_{i,j}(A_{ij},A_{ij})=||\mathbf A||^2,$$ the latter norm taken in the Frobenius sense.

The primary spaces in our analysis below are
\begin{align}
V \equiv  ~H^1_{0}(\Omega), & \hskip1.5cm
\bV \equiv ~(H^1_{0}(\Omega))^3,\hskip1.5cm 
\end{align} for the pressure $p$ and elastic displacement  $\bu$, respectively. 
The norms in these spaces are taken in the natural sense, respectively, accounting for Poincar\'e's and Korn's inequalities \cite{kesavan}. For $V$, we take the standard gradient norm: $||v||_V = ||v||_{H_0^1(\Omega)} = ||\nabla v||_{L^2(\Omega)}$. We will frequently need to denote the duality pairing between $V$ and $V'$ or $\bV$ and $\bV'$, for which we will use the generic notation $\langle \cdot, \cdot \rangle$. (For more general spaces $B$ and $B'$, we may write $\langle \cdot , \cdot \rangle_{B' \times B}$ for clarity.)

We utilize the notation $\nabla \bu$ as the Jacobian matrix of $\mathbf u$ and the associated symmetric gradient $\varepsilon(\bu)$, yielding the following definitions and
formal identities:
\begin{align}
&\nabla \mathbf u = (\partial_j u^i),~~\nabla\mathbf u^T =(\partial_iu^j);  ~~ \varepsilon(\mathbf u) = \dfrac{1}{2}[\nabla \bu +\nabla \bu^T]&\\
&(\nabla \bu, \nabla \bw)_{\Omega} =  \int_{\Omega} [\nabla \bu : \nabla \bw] d\mathbf x =~  \int_{\Omega} tr (\nabla \bu \nabla\bw^T)d\mathbf x=  \int_{\Omega} tr(\nabla \bw^T \nabla \bu)d\mathbf x=(\nabla\bw^T,\nabla \bu^T)& \\
&(\varepsilon(\bu),\nabla \bw)=\frac{1}{2}(\nabla \bu , \nabla \bw)+\frac{1}{2}(\nabla \bu^T, \nabla \bw)=~ (\varepsilon(\bu), \varepsilon(\bw)) .&
\end{align}

In the simplified setting, the bilinear form associated with the elasticity operator is given by
\begin{equation}\label{bil}e(\bu,\bw) = (\nabla \cdot \bu, \nabla \cdot \bw) + (\nabla \bu,\nabla \bw)+(\nabla \bu,\nabla \bw^T).\end{equation} 
We topologize the space $\bV$ via ~$e(\cdot,\cdot)$, which is to say that we take the norm induced by $e(\cdot,\cdot)$ as the norm on $\bV$, and, via Korn's inequality  and Poincar\'e, this is equivalent to the full $\mathbf H^1(\Omega)$ norm on $\bV$ \cite{kesavan}, as in \cite{cao,bgsw}.

In our estimates below, we utilize the notation of ~$Q_1 \lesssim Q_2$~ to indicate that there is a constant $C$ depending only on non-critical quantities such that  ~$Q_1 \le CQ_2.$ In general, throughout the paper the quantity $C$ represents a generic constant that may change from line to line. If a constant exhibits a critical dependence, this will be denoted with subscripts or in parentheses, for instance: ~$||f|| \le C_p||g||$ or ~$A=A(\Omega)$.

Finally, in this analysis {\em we assume that the principal domain $\Omega$ is of class $\mathcal C^2$} \cite{kesavan,ciarlet}, so that elliptic regularity results apply \cite{kesavan}.
\subsection{Weak Solutions}
As one can see in \cite{owc,show1,cao,frenchpaper}, for instance, there are many different notions of strong and weak solution to poroelastic systems. Our notion of solution is consistent with that provided in \cite{zenisek}, in the sense that the solution satisfies a weak space-time form of \eqref{1}--\eqref{3}.  Moreover, our weak solutions are in line with the general notion of weak solution for the time-dependent linear problem holding in the dual sense (in $L^2(0,T;V')$), as presented \cite{showmono}.

\begin{definition}[Weak solutions]\label{zerosol}
A solution to \eqref{1}--\eqref{3} with $c_0\ge 0$ is represented by the pair of functions 
$$\bu \in L^2(0,T;\bV) \ \ \text{and}\ \  p \in L^2(0,T;V),$$ 
with $\zeta = c_0p + \nabla \cdot \bu \in L^2(0,T;L^2(\Omega))\cap H^1(0,T;V')$, such that: 
\vskip.1cm
\noindent(a) the following variational forms are satisfied for any $\bw \in L^2(0,T;\bV)$, $q \in L^2(0,T;V)$:
\begin{align}\label{weakform}
\int_0^T e(\bu,\bw) dt - \int_0^T& (p,\nabla \cdot \bw)_{\Omega} dt = \int_0^T (\bF,\bw)_{\Omega} ~dt\\\label{weakform2}
\int_0^T \big(k(\zeta)\nabla p, \nabla q\big)_{\Omega} ~dt-\int_0^T&\langle\zeta_t, q\rangle_{V'\times V} ~dt = \int_0^T \langle S,q \rangle_{V'\times V} ~dt
\end{align}
(b) for every $q \in V$, the term $(\zeta(t),q)_{L^2(\Omega)}$ uniquely defines an absolutely continuous function on $[0,T]$ and the initial condition $\big(\zeta(0), q)_{L^2(\Omega)} = (d_0, q)_{L^2(\Omega)}$ is satisfied. 
\end{definition}

\begin{remark} \label{dataremark2} Alternatively to (b) above, one could assume that $d_0 \in L^2(\Omega)$ but specify that $\zeta \in C([0,T]; V')$ and $\zeta(t) \big|_{t=0} = [c_0p+\nabla \cdot \bu](t)\big|_{t=0} = d_0$ in the $H^{-1}(\Omega)$ sense. This is precisely what we will obtain through our constructions. \end{remark}

\begin{remark}  In \cite{zenisek,bgsw}, test functions are taken as space-time products, and all terms are defined in terms of spatial $L^2(\Omega)$ inner products. Our formulation is equivalent by density, as the test functions of the form $\bw(\mathbf x)f(t)$, with $\bw \in \bV$ and $f \in C_0^{\infty}(0,T)$, are dense in $L^2(0,T;\bV)$; similarly, test functions of the form $q(\mathbf x)f(t)$, with $q\in V$ and $f \in C_0^{\infty}(0,T)$, are dense in $L^2(0,T;V)$. \end{remark}

\begin{remark} We note finally that the above definition of weak solution could certainly be weakened. For instance, in the weak form of elasticity, the RHS could be replaced by $\int_0^T\langle \bF, \bw \rangle_{\bV' \times \bV}dt$ and the initial condition $d_0$ could be taken in $V'$ (being mindful of the previous approaches in \cite{frenchpaper, bgsw,cao,show1,indiana}). However, we use Definition \ref{zerosol} based on the regularity required for our construction, predominantly influenced by the presence of nonlinearity in the problem and a careful treatment of the spatial regularity of $\zeta=c_0p+\nabla \cdot \bu$.  
\end{remark}

It will be convenient in the estimates below to utilize a notation for ``source data" associated to a priori estimates obtained in the analysis of the pressure equation \eqref{weakform2}.
\begin{definition}\label{energy_data}[\textit{Notion of Source Data}]
\begin{align}
 \text{DATA}\Big|_0^T \equiv& \int_0^T\Big[||S(t)||^2_{V'}+||\bF_t(t)||^2_{\bV'}+||\bF(t)||_{\bV'}^2\Big]dt \label{data0}
\end{align}
\end{definition}

\subsection{Main Results and Comparison to Previous Literature}\label{mainresultssec}
We begin this section with the statements of the principal results, and follow them with an in-depth, technical discussion of our results in relation to the literature. It is important to note the ways in which our contributions here represent alternative proofs for similar results in the literature, and in what ways our approaches here are novel. Indeed, there is a striking amount of subtlety already present in the analysis of the associated {\em linear} Biot system.

The first auxiliary result we discuss is that of existence of weak solutions---adapted to the setting at hand and restricted to $c_0>0$---for the associated linear problem. Namely, given $z \in L^2(0,T;L^2(\Omega))$ and associated permeability $k(z)$, we want to solve:
\begin{equation}\label{abstractinf}
\begin{cases}
- \Delta \mathbf{u} - 2 \grad(\grad \cdot \mathbf{u})=-\nabla p+\bF  &~ \in L^2(0,T; \mathbf V')\\
\zeta_t-\nabla \cdot [k(z)\nabla p]=S&~ \in L^2(0,T;V')\\
\zeta=c_0p+\nabla \cdot \bu &~ \in L^2(0,T;L^2(\Omega))\\
\zeta(0)=d_0&~ \in L^2(\Omega).
\end{cases}
\end{equation}

\begin{theorem}[Linear Weak Solution]\label{linearwellp*} Let $c_0 > 0$, and assume that the permeability $k$ satisfies the hypotheses of Assumption \ref{Assumpk}. Let $d_0\in L^2(\Omega)$, $z \in L^2(0,T;L^2(\Omega))$, $\bF \in L^2(0,T;\mathbf L^2(\Omega))\cap H^1(0,T;\bV')$, and $S \in L^2(0,T;V')$. Then \eqref{abstractinf} has a  weak solution $(\bu(z),p(z), \zeta(z))$, where $\bu(z) \in L^2(0,T;\mathbf H^2(\Omega)\cap \bV)$, $p(z) \in L^2(0,T;V)$, and $\zeta(z) \in L^2(0,T;H^1(\Omega)) \cap H^1(0,T;V')$, with associated estimates:  
\begin{align}\label{placeholder1}
c_0||p||_{L^{\infty}(0,T;L^2(\Omega))}^2+\|p\|_{L^2(0,T;V)} \lesssim& ~ ||d_0||^2_{L^2(\Omega)}+ DATA|_0^T\\[.2cm] \label{placeholder2}
||\bu||^2_{L^2(0,T;\mathbf H^2(\Omega)\cap \bV)} \lesssim &~ ||p||_{L^2(0,T,V)}^2+||\bF||_{L^2(0,T;\mathbf L^2(\Omega))}^2\\[.2cm] \label{placeholder3}
\|[c_0p+\nabla \cdot \bu]_t\|_{L^2(0,T;V')} \lesssim &~  \|p\|_{L^2(0,T;V)} + \|S\|_{L^2(0,T;V')}
\end{align}
\end{theorem}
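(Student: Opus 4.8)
The plan is to construct the solution by a Galerkin (spatial semi-discretization) scheme, derive the three a priori estimates uniformly in the discretization parameter, and pass to the limit. Since $z$ (and hence the coefficient $k(z)$) is merely frozen data satisfying the bounds $k_1 \le k(z) \le k_2$ of Assumption \ref{Assumpk}, the system \eqref{abstractinf} is genuinely \emph{linear}, though not in the standard coercive parabolic form because of the constraint $\zeta = c_0 p + \nabla\cdot\bu$ coupling an elliptic equation to a parabolic one.

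\textbf{Step 1 (Galerkin setup).} Fix smooth bases for $\bV$ and $V$ (e.g.\ eigenfunctions of the elasticity operator $e(\cdot,\cdot)$ and of $-\Delta$ on $H_0^1$), let $\bV_m, V_m$ be the span of the first $m$ modes, and seek $\bu_m(t)\in \bV_m$, $p_m(t)\in V_m$ solving the finite-dimensional analogue of \eqref{weakform}--\eqref{weakform2} with $\zeta_m = c_0 p_m + \nabla\cdot\bu_m$ and $\zeta_m(0)$ the projection of $d_0$. Because $c_0>0$, the elasticity equation can be used to solve $\bu_m$ in terms of $p_m$ (it is uniformly elliptic), and substituting into the mass equation yields a system of ODEs for the coefficients of $p_m$ with an invertible (since $c_0>0$) mass matrix; hence local, then global, existence of $\bu_m,p_m$ follows from the a priori bounds below.

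\textbf{Step 2 (Energy estimate, \eqref{placeholder1}).} Test the discrete elasticity equation with $\bu_{m,t}$ and the discrete mass equation with $p_m$, then add. The key cancellation is that $(p_m,\nabla\cdot\bu_{m,t}) = (\nabla\cdot\bu_{m,t},p_m)$ appears with opposite signs, so the cross terms telescope and one is left with
$$
\frac12\frac{d}{dt}\Big[e(\bu_m,\bu_m) + c_0\|p_m\|^2\Big] + \big(k(z)\nabla p_m,\nabla p_m\big) = (\bF,\bu_{m,t}) + \langle S,p_m\rangle_{V'\times V}.
$$
The term $(\bF,\bu_{m,t})$ is the delicate one: $\bu_{m,t}$ is not controlled a priori, so I would integrate it by parts in time, $\int_0^t(\bF,\bu_{m,t}) = (\bF,\bu_m)\big|_0^t - \int_0^t(\bF_t,\bu_m)$, which is exactly why $\bF\in H^1(0,T;\bV')$ enters DATA$|_0^T$; the resulting terms are absorbed using the coercivity of $e(\cdot,\cdot)$ (Korn + Poincar\'e), the lower bound $k_1$ on $k(z)$, Young's inequality, and Gr\"onwall. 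This produces \eqref{placeholder1} together with a uniform bound on $\|\bu_m\|_{L^\infty(0,T;\bV)}$, with constants independent of $m$ and of $z$ (depending only on $k_1$).

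\textbf{Step 3 (Elliptic regularity for $\bu$, \eqref{placeholder2}).} For each fixed $t$, the elasticity equation $-\Delta\bu_m - 2\grad(\grad\cdot\bu_m) = -\nabla p_m + \bF$ has right-hand side in $\mathbf L^2(\Omega)$; since $\Omega$ is $\mathcal C^2$, elliptic (Agmon--Douglis--Nirenberg / Lam\'e) regularity gives $\bu_m\in\mathbf H^2(\Omega)\cap\bV$ with the bound \eqref{placeholder2}. One must be slightly careful that this estimate passes through the Galerkin truncation — this is standard if the basis is chosen to be the elasticity eigenfunctions, or alternatively one recovers it in the limit by testing the limit equation (which holds in $L^2(0,T;\bV')$) against $\mathbf H^2$ test functions; integrating \eqref{placeholder2} in time and using \eqref{placeholder1} gives the stated $L^2(0,T;\mathbf H^2\cap\bV)$ bound.

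\textbf{Step 4 (Bound on $\zeta_t$, \eqref{placeholder3}).} From the discrete mass equation, $\langle\zeta_{m,t},q\rangle = \langle S,q\rangle - (k(z)\nabla p_m,\nabla q)$ for $q\in V_m$; using $|k(z)|\le k_2$ and $\|q\|_V = \|\nabla q\|$, the right side is bounded by $(\|S\|_{V'} + k_2\|\nabla p_m\|)\|q\|_V$, so $\|\zeta_{m,t}\|_{V'}\lesssim \|S\|_{V'}+\|p_m\|_V$; squaring and integrating gives \eqref{placeholder3}. Here I would note that $\zeta_{m,t}$ a priori lies in $V_m'$, but since $V_m\subset V$ densely in the relevant limit and the bound is uniform, the limiting $\zeta_t$ lies in $L^2(0,T;V')$.

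\textbf{Step 5 (Passage to the limit).} The uniform bounds yield (along a subsequence) weak and weak-$\ast$ limits: $\bu_m\rightharpoonup\bu$ in $L^2(0,T;\mathbf H^2\cap\bV)$, $p_m\rightharpoonup p$ in $L^2(0,T;V)$, $\zeta_m\rightharpoonup\zeta$ in $L^2(0,T;H^1(\Omega))$ with $\zeta_{m,t}\rightharpoonup\zeta_t$ in $L^2(0,T;V')$. Linearity makes the limit passage in all terms immediate — in particular $k(z)\nabla p_m\rightharpoonup k(z)\nabla p$ since $k(z)$ is a fixed $L^\infty$ multiplier — so $(\bu,p,\zeta)$ satisfies \eqref{weakform}--\eqref{weakform2}. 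The Aubin--Lions lemma (with $\zeta_m$ bounded in $L^2(0,T;H^1)$ and $\zeta_{m,t}$ in $L^2(0,T;V')$) gives $\zeta\in C([0,T];V')$ and $\zeta(0)=d_0$ in the sense of Remark \ref{dataremark2}, verifying part (b) of Definition \ref{zerosol}. Finally $\zeta = c_0p+\nabla\cdot\bu$ holds in $L^2(0,T;L^2(\Omega))$ by weak convergence of the linear constraint, and the a priori estimates \eqref{placeholder1}--\eqref{placeholder3} are inherited by weak lower semicontinuity of the norms.

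\textbf{Main obstacle.} The crux is Step 2: handling $(\bF,\bu_{m,t})$ without any a priori control of $\bu_{m,t}$, which forces the integration-by-parts in time and the $H^1(0,T;\bV')$ regularity of $\bF$ — and, relatedly, making sure the energy identity closes with constants depending on $k(z)$ only through $k_1$, so that these estimates are uniform over all admissible frozen coefficients (this uniformity is what makes them usable for the fixed point argument later). The degeneracy as $c_0\searrow 0$ does not appear here since $c_0>0$ is assumed, but the estimates are deliberately written so that the $c_0$-dependence is explicit and the bad term $c_0\|p\|_{L^\infty(0,T;L^2)}^2$ is the only one that degenerates.
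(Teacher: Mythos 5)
Your proposal is correct, but it takes a genuinely different route from the paper. The paper never tests the elasticity equation with $\bu_t$ at all: it first translates away the momentum source by setting $\bu_{\bF}=\cE^{-1}(\bF)$ (Section \ref{translation}), then reduces the two-field system to the single implicit scalar evolution $[(c_0\mathbf I+B)p]_t-\nabla\cdot[k(z)\nabla p]=S$ via the pressure-to-dilation operator $B=-\nabla\cdot\cE^{-1}\nabla$, runs a Galerkin scheme in $V$ alone with the multiplier $p_n$ (Appendix B), and only afterwards reconstructs $\bu=\cE^{-1}(-\nabla p+\bF)$ and its $\mathbf H^2$ regularity. In that formulation the hypothesis $\bF\in H^1(0,T;\bV')$ is consumed by the translation (so that $\nabla\cdot\bu_{\bF,t}$ can be absorbed into $S$), whereas in yours it is consumed by the integration by parts in time of $(\bF,\bu_{m,t})$ --- the same regularity for essentially the same reason. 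What the paper's route buys is that the implicit $B$-formulation is exactly the object reused in the fixed-point correspondence $\mathscr F$ and matches Showalter's framework; what your route buys is the classical symmetric Biot energy identity (\v{Z}en\'i\v{s}ek-style), the extra conclusion $\bu\in L^\infty(0,T;\bV)$ for free, and no need to establish the operator-theoretic properties of $B$ before constructing anything. Note that your use of $\bu_{m,t}$ as a multiplier is unproblematic only because it happens at the finite-dimensional level --- this is precisely the multiplier that is impermissible for weak solutions of the continuous problem, which is why the paper avoids it.

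Two small loose ends, neither fatal. First, your energy identity produces the initial term $e(\bu_m(0),\bu_m(0))+c_0\|p_m(0)\|^2$, which you do not discuss; it is controlled by combining the discrete elasticity equation at $t=0$ with the prescription of $\zeta_m(0)$, giving a bound by $\|d_0\|^2_{L^2(\Omega)}$ and $\|\bF\|^2_{L^\infty(0,T;\bV')}\lesssim \text{DATA}|_0^T$ (with a $c_0$-dependent constant, exactly as in the paper's bound $(d_0,[c_0\mathbf I+B]^{-1}d_0)$). Second, in Step 4 the discrete equation only bounds the action of $\zeta_{m,t}$ on $V_m$, so the uniform $L^2(0,T;V')$ bound you feed into Aubin--Lions in Step 5 is really a bound on the projected derivative; since the problem is linear, strong compactness of $\zeta_m$ is not actually needed --- weak convergence suffices to pass to the limit, and $\zeta\in C([0,T];V')$ with $\zeta(0)=d_0$ then follows from $\zeta\in H^1(0,T;V')$ together with the standard integration-by-parts-in-time identification of the initial datum, which is how the paper recovers it in Appendix B.
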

A few remarks are in order about the linear result above:
\begin{remark} We note that nothing in the above result (or its corresponding proof in Appendix B) changes if, instead of $k=k(z)$  we take $k=k(\mathbf x,t)$ to be a given $L^{\infty}(0,T;L^{\infty}(\Omega))$ - function. 
\end{remark}
\begin{remark}\label{showremark} The linear result above for $k=k(\mathbf x,t)$ was obtained earlier in \cite{indiana} (later exposited in \cite[p.116]{showmono}). Those results provide the existence of weak solutions for time-dependent implicit problems. The conditions for existence are quite general and permit $c_0\ge 0$. Moreover uniqueness results are available
with the additional hypothesis that  $k_t \in L^1(0,T;L^{\infty}(\Omega))$. For the analysis of the nonlinear problem, we cannot impose additional assumptions on the permeability, as we must apply results for $k(z(t))$ in our fixed point construction, precluding additional hypotheses on $k$.
\end{remark}

Next we present our result for the existence of weak solution to the nonlinear system:

\begin{theorem}[Nonlinear Weak Solution]\label{th:main}
Consider the nonlinear coupled system (\ref{Eq:elasticity})--(\ref{Eq:IC}) with $c_0\ge 0$, permeability function $k(\cdot)$ satisfying Assumption \ref{Assumpk}, and distributed sources $S \in L^2(0,T;V')$ and $\bF \in L^2(0,T;\mathbf L^2(\Omega)) \cap H^1(0,T;\bV')$.  For initial data~ $\zeta(0)=d_0 \in L^2(\Omega)$, there exists a weak solution  
~$\bu \in L^2(0,T;\bV) \ \ \text{and}\ \  p \in L^2(0,T;V)$ 
in the sense of Definition \ref{zerosol}.
Moreover, $c_0p \in L^{\infty}(0,T;L^2(\Omega))$ and $\bu \in L^2(0,T;\mathbf H^2(\Omega) \cap \bV)$, with
 the same inequalities \eqref{placeholder1}--\eqref{placeholder3} holding for nonlinear weak solutions.

\end{theorem}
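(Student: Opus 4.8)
The plan is to construct weak solutions through a set-valued fixed point argument built on the linear theory of Theorem~\ref{linearwellp*}, treating first the compressible case $c_0>0$ and then recovering $c_0=0$ by a limiting procedure. Write $Q=\Omega\times(0,T)$. Fix $c_0>0$ and define the solution correspondence $\mathcal S\colon L^2(Q)\rightrightarrows L^2(Q)$ by letting $\mathcal S(z)$ be the set of all $\zeta=c_0p+\nabla\cdot\bu$ arising from weak solutions $(\bu,p,\zeta)$ of the linear system \eqref{abstractinf} with permeability $k(z)$. Since $k_1\le k(z)\le k_2$ for every $z$ (Assumption~\ref{Assumpk}), Theorem~\ref{linearwellp*} gives $\mathcal S(z)\neq\emptyset$ for all $z\in L^2(Q)$, and the estimates \eqref{placeholder1}--\eqref{placeholder3} hold with $\lesssim$-constants independent of $z$. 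Consequently $\mathrm{Range}(\mathcal S)$ lies in a fixed bounded subset of $L^2(0,T;H^1(\Omega))\cap H^1(0,T;V')$, which by the Aubin--Lions compactness lemma (using $H^1(\Omega)\hookrightarrow\hookrightarrow L^2(\Omega)\hookrightarrow V'$) is relatively compact in $L^2(Q)$. Take $\mathcal K$ to be the closed convex hull of $\mathrm{Range}(\mathcal S)$: then $\mathcal K$ is a nonempty, convex, compact subset of $L^2(Q)$ with $\mathcal S(\mathcal K)\subseteq\mathcal K$.

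Next I would verify the hypotheses of a Kakutani--Fan--Glicksberg-type set-valued fixed point theorem for $\mathcal S\big|_{\mathcal K}$. For fixed $z$ the system \eqref{abstractinf} is linear in $(\bu,p,\zeta)$, so a convex combination of two solutions is again a solution with the same data and initial value $d_0$; hence each $\mathcal S(z)$ is convex. For the closed graph property, suppose $z_n\to z$ in $L^2(Q)$ and $\zeta_n\in\mathcal S(z_n)$ with $\zeta_n\to\zeta$ in $L^2(Q)$; using the uniform bounds extract subsequences with $\bu_n\rightharpoonup\bu$ in $L^2(0,T;\mathbf H^2(\Omega)\cap\bV)$, $p_n\rightharpoonup p$ in $L^2(0,T;V)$, $\zeta_n\rightharpoonup\zeta$ in $L^2(0,T;H^1(\Omega))$, $(\zeta_n)_t\rightharpoonup\zeta_t$ in $L^2(0,T;V')$, and $\zeta_n\to\zeta$ in $C([0,T];V')$. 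The linear elasticity identity \eqref{weakform} passes to the limit immediately. In the pressure identity \eqref{weakform2} the only nonlinear term is $\int_0^T(k(z_n)\nabla p_n,\nabla q)\,dt=\int_0^T(\nabla p_n,\,k(z_n)\nabla q)\,dt$; since the superposition operator $k(\cdot)$ is continuous on $L^2(Q)$ and $0\le k(z_n)\le k_2$, dominated convergence yields $k(z_n)\nabla q\to k(z)\nabla q$ strongly in $L^2(Q)$, which against the weak convergence $\nabla p_n\rightharpoonup\nabla p$ gives convergence of the pairing, the remaining terms being linear. The convergence $\zeta_n\to\zeta$ in $C([0,T];V')$ preserves $\zeta(0)=d_0$. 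Thus $\zeta\in\mathcal S(z)$, the graph is closed, and the same argument with $z_n\equiv z$ shows each value $\mathcal S(z)$ is closed. The fixed point theorem then produces $z^\ast\in\mathcal S(z^\ast)$, i.e.\ a triple with $\zeta^\ast=c_0p^\ast+\nabla\cdot\bu^\ast=z^\ast$ solving \eqref{abstractinf} with $k(z^\ast)=k(\zeta^\ast)$, which is exactly a weak solution of \eqref{Eq:elasticity}--\eqref{Eq:IC} for $c_0>0$; the bounds \eqref{placeholder1}--\eqref{placeholder3} are inherited verbatim.

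For the incompressible case $c_0=0$, I would apply the above with $c_0=1/n\searrow0$ to get nonlinear weak solutions $(\bu_n,p_n,\zeta_n)$, $\zeta_n=\tfrac1n p_n+\nabla\cdot\bu_n$, obeying \eqref{placeholder1}--\eqref{placeholder3} with constants independent of $n$. Then $\|p_n\|_{L^2(0,T;V)}$, $\|\bu_n\|_{L^2(0,T;\mathbf H^2(\Omega)\cap\bV)}$ and $\|(\zeta_n)_t\|_{L^2(0,T;V')}$ are bounded, so $\tfrac1n p_n\to0$ in $L^2(0,T;V)$ and $\zeta_n$ is bounded in $L^2(0,T;H^1(\Omega))\cap H^1(0,T;V')$; Aubin--Lions again yields a subsequence with $\zeta_n\to\zeta$ strongly in $L^2(Q)$, $\zeta=\nabla\cdot\bu$, together with $p_n\rightharpoonup p$ in $L^2(0,T;V)$ and $\bu_n\rightharpoonup\bu$ in $L^2(0,T;\mathbf H^2(\Omega)\cap\bV)$. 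Passing to the limit in \eqref{weakform}--\eqref{weakform2} exactly as before --- now $\zeta_n\to\zeta$ in $L^2(Q)$ makes $k(\zeta_n)\to k(\zeta)$ in $L^2(Q)$ --- and invoking weak lower semicontinuity of the norms to retain the estimates finishes the proof.

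I expect the main obstacle to be the closed-graph (upper semicontinuity) verification for the set-valued map $\mathcal S$. Uniqueness of solutions to the linear problem \eqref{abstractinf} is not available under the present hypotheses --- weak solutions are not themselves admissible multipliers --- so the solution map is genuinely multivalued and one cannot fall back on a single-valued Schauder argument. The delicate point is extracting strong $L^2(Q)$-compactness of $\zeta$ from the regularity $L^2(0,T;H^1(\Omega))\cap H^1(0,T;V')$ and combining it, via continuity of the superposition operator $k(\cdot)$ and a weak--strong pairing, with the merely weak convergence of $\nabla p_n$ in the nonlinear Darcy flux. A secondary technical issue is keeping all constants in \eqref{placeholder1}--\eqref{placeholder3} uniform as $c_0\searrow0$, which is what makes the incompressible limit legitimate.
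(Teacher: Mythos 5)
Your proposal follows essentially the same route as the paper: a multi-valued fixed point for the map $z\mapsto\zeta$ built on the linear theory, with Aubin--Lions compactness of the fluid content, the Nemytskii continuity of $k(\cdot)$ to pass to the limit in the Darcy flux, and a singular limit $c_0\searrow 0$ for the incompressible case (the paper uses Bohnenblust--Karlin on all of $L^2(Q)$ with relatively compact range rather than Kakutani--Fan--Glicksberg on the closed convex hull, which is an immaterial difference). One point needs repair, and it is precisely the subtlety the whole multi-valued setup exists to handle: you define $\mathcal S(z)$ as the set of \emph{all} $\zeta$ arising from weak solutions of the linear problem, and then assert that consequently the range of $\mathcal S$ is bounded in $L^2(0,T;H^1(\Omega))\cap H^1(0,T;V')$. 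That inference is not available --- since weak solutions are not admissible test functions, an arbitrary weak solution of \eqref{abstractinf} is not known to satisfy \eqref{placeholder1}--\eqref{placeholder3}; Theorem \ref{linearwellp*} only produces \emph{some} solution obeying them. The correspondence must therefore be defined as the set of weak solutions \emph{that satisfy the a priori estimates} (as the paper does); this restricted set is still nonempty, and one must then check that it is itself convex and closed, which works because convex combinations and weak limits preserve the bounds with the same constant (convexity and weak lower semicontinuity of norms). With that adjustment every remaining step of yours goes through. A small genuine improvement on your side: in the closed-graph step you move $k(z_n)$ onto the test function and use dominated convergence to get $k(z_n)\nabla q\to k(z)\nabla q$ strongly in $L^2(Q)$ for every $q\in L^2(0,T;V)$, pairing it with $\nabla p_n\rightharpoonup\nabla p$; the paper instead restricts to $q\in L^2(0,T;V)\cap L^\infty(0,T;W^{1,\infty}(\Omega))$ and concludes by density, so your variant is slightly cleaner.
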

\begin{remark} We note from the estimates above that, in the sense of the pressure equation, the solution is truly ``weak." However, since we have elliptic-parabolic coupling, with the regularity assumptions placed on $\bF$, the solution is ``strong" in the sense of the elastic displacement, since that equation holds $a.e.~\mathbf x,~a.e.~t$. 
\end{remark}

We now address uniqueness of solution through the imposition of additional regularity hypotheses. As the problem is fundamentally quasi-linear in nature, such additional regularity for uniqueness is expected. Our approach to uniqueness is rooted in multiplier estimates, which are themselves problematic for weak solutions. Hence, we need to restrict our attention to the class of weak solutions (for fixed data $\bF,S,d_0$,  permeability function, and intrinsic parameters) such that the solution can properly be used as a test function. Beyond this, additional spatial regularity will be needed to manipulate the nonlinear permeability term. 

\begin{definition}\label{Wset} Define the class of  solutions $\mathcal W_T=\mathcal W_T(\bF,S,d_0,k(\cdot))$ as consisting of weak solutions that have additional time regularity, i.e.:
\begin{equation}\label{W_T}
\mathcal W_T\equiv \big\{ (\bu,p) ~\text{is a weak solution in the sense of Definition \ref{WeakSol} on $[0,T]$ }~| ~p_t \in L^2(0,T;L^2(\Omega))\big\}.
\end{equation}
\end{definition}

\begin{theorem}[Uniqueness]\label{th:main2} Let $c_0 \ge 0$. Suppose that, in addition to Assumption \ref{Assumpk}, we have $k \in Lip(\mathbb R)$ and assume $(\bu,p) \in \mathcal W_T$. 

\begin{itemize}
\item Suppose $d_0 = \nabla \cdot \bu_0+c_0p_0 \in L^2(\Omega)$ for some $\bu_0 \in \bV$. If additionally $p \in L^2(0,T;W^{1,\infty}(\Omega))$, then the solution $(\bu,p)$ is unique in the class $\mathcal W_T$. 
\item If $c_0>0$ and $p \in L^2(0,T;W^{1,\infty}(\Omega))$, then  $(\bu,p)$ is unique in the class $\mathcal W_T$. 

\end{itemize}

In each of the above cases, if one solution in $\mathcal W_T$ has the appropriate additional spatial regularity for $p$, then all solutions in $\mathcal W_T$ are equal. 
\end{theorem}
\begin{remark}
By the standard Sobolev embeddings \cite{kesavan}, it is sufficient for the theorem above to have $p \in L^2(0,T;H^{2.5+\delta}(\Omega))$ for any $\delta>0$.
\end{remark}
\begin{remark} We note the two above cases sacrifice one hypothesis at the cost of another. In the second bullet point, some compressibility needs to be assumed, but no additional structure of the data $d_0$ need be assumed. In the first bullet, we can take $c_0=0$ but require information about $\bu(t=0)$ to be independently specified. We point out here that these conditions are an improvement of those in \cite{cao}; in that reference they require $c_0>0$ and also impose a smallness condition on $\nabla p$ in ${L^{\infty}\big((0,T)\times \Omega\big)}$ in terms of the intrinsic parameters. We also mention that, to the best of our knowledge, no previous work actually constructs strong solutions. \end{remark}

\noindent {\bf Challenges and Relation to Previous Literature:}  The main mathematical challenges in this problem are represented by (i) the implicit, degenerate evolution present in the system, as well as (ii) the nonlinear coupling (with no evident monotone structure) in the permeability---it being a function of fluid content. There is substantial mathematical literature focused on well-posedness analysis for  linear poroelastic systems, where the permeability tensor is assumed to be constant. The key references in the linear setting are   \cite{frenchpaper,zenisek, indiana, owc, show1}. 

A foundational reference for all of the cited mathematical Biot studies is \cite{zenisek}. This paper provides a construction of solutions in the {\it 2-D linear case} using Rothe's method (full temporal and spatial discretization), with the analysis based on a priori estimates. The analysis is done on the entire $(\bu,p)$ system, and, as such, requires the specification of initial displacement $\bu(0) \in V^2$ and initial pressure $p(0) \in L^2(\Omega)$. In contrast, the seminal work  in \cite{frenchpaper, showmono,show1} reduces the full linear Biot system to an implicit, degenerate evolution. This allows---again in the linear case---a modified semigroup theory to obtain both weak and strong solutions, and uniqueness is addressed. 
For the weak solutions (what \cite[Section 6]{show1} calls the ``holomorphic case") only specification of the initial fluid content $\zeta(0) \in H^{-1}$ is needed. The implicit semigroup approach works in quotient and seminormed spaces to reduce the implicit problem to that of a regular explicit Banach-valued ODE \cite[Chapter IV.6]{showmono}. Estimates in this setting are obtained in the dual domain for associated resolvents. As such, the approach is not immediately generalizable to time-dependent and/or quasilinear cases. 

We note that in \cite[Chapter III.3]{showmono}, a nice formulation for weak solutions to the linear, time-dependent problem is presented based on a generalized version of Lax-Milgram due to Lions (an approach that originally appeared in \cite{indiana}). As pointed out above,  that theory is applicable to the linear problem at hand with comparable results to our Theorem \ref{linearwellp*}. Lastly, with respect to the linear analysis, the more recent \cite{owc} provides a Galerkin-based construction of solutions for the full Biot problem, making use of an explicit solver for the embedded Stokes-type problem in the dynamics. There, solutions are clearly constructed without temporal discretization, but strong assumptions  are made on the data in order to obtain good a priori estimates. 

In the authors' previous work \cite{bgsw},  the nonlinear problem presented here is addressed with $c_0=0$ and allows for the possibility of visco-elastic effects in the Biot structure. Additionally, motivated by physical considerations, a configuration  with mixed boundary conditions on a Lipschitz domain, and non-zero boundary sources, is considered. That work is based on full spatio-temporal discretization (adapting the linear argument in \cite{zenisek}). As such we make use of the stronger assumption on initial data in order to obtain good estimates at the temporally discretized level and carefully pass with the limit, invoking compactness in the fluid content derived from a small amount of elliptic regularity. The multipliers approach works there, albeit in the discrete setting, with two subsequent limit passages required. The accompanying estimates are less natural however, and the fully discretized nature is neither optimal nor natural for modern numerical analysis of the nonlinear problem. 

In comparison, our goal here is to provide a theory of solutions  for the nonlinear poroelastic coupling in (\ref{Eq:elasticity})--(\ref{Eq:IC}). As mentioned before, we consider a similar model and set of assumptions as the ones used in  \cite{cao}. However, we permit the case of fluid-solid mixtures which may have incompressible constituents ($c_0=0$), with applications to biological tissues. This degeneracy is rather benign at the linear level, but presents subtle challenges for the analysis here, owing to the fact that the key operator $B$ (as seen in Section \ref{Bdef})  is not invertible on $L^2(\Omega)$, but $c_0\mathbf I+B$ is. Indeed, we use critically the presence of $c_0>0$ to construct solutions, and then produce the solution for $c_0=0$ via a singular limit approach as $c_0 \searrow 0$. It is also worthwhile to note that, in line with biological applications, we allow the permeability to depend on the {\em full} fluid content, i.e., $k(\zeta)$ for $\zeta=c_0p+\alpha \nabla \cdot \bu$ (also as in \cite{bgsw}). In \cite{cao}, the construction critically requires $c_0>0$ \cite[p.1259(line 6), p.1260(line -9)]{cao} but there, {\em the permeability depends only on dilation}, i.e., $k(\nabla \cdot \bu)$ \cite[p.1254]{cao}; this distinction is mathematically non-trivial.

We present here what we believe to be the most direct and illustrative approach for existence of weak solutions. Our approach does not involve the discretization of the balance equations in both time and space \cite{cao, bgsw}. We believe this is beneficial for future considerations, as full discretization is cumbersome for a sought-after construction of smooth solutions, and our semi-discretized approach is perhaps more amenable to numerical treatment. The work in \cite{show1} focuses on constant permeability $k$ (which renders a linear coupling in the system) and develops a semigroup theory for implicit evolution equations for both strong and weak solutions. The approach is generalized for the case of nonlinear permeability function dependent on pressure which preserves a monotone structure in \cite{show2}. We note that the strategy developed in \cite{show1} can not be directly applied here, as the model at hand does not exhibit such monotonicity properties. Rather, we build linear time dependent solutions and carefully construct a functional correspondence that leads to a fixed point. In constructing weak solutions via  estimates in a fixed point argument, we hope to have provided a framework for the future construction of smooth solutions, which should be unique, according to the criterion given here.

\section{Fundamental Operators and Translation of Momentum Source}\label{opsprops}
In this section we introduce the principal operators that are used in the proofs of the main theorems, along with their properties. We  
follow the abstract framework provided in \cite{show1}. 
In the last part of the section we provide formal ``translations" of the linear and nonlinear problems that allow us to consider the problem with null distributed force in the balance of linear momentum, upon translating the initial data and the pressure source $S$. 

In what follows, it will be necessary to invoke the gradient and divergence operators \cite{show1,temam}, and their dual relationship with respect to $\bV = \mathbf H_0^1(\Omega)$ and $\bV'=\mathbf H^{-1}(\Omega)$. Namely, from \cite{temam}, the standard gradient
$\nabla : L^2 \to \mathbf V'$ ~has  dual operator ~$-[\nabla \cdot] : \mathbf V \to L^2(\Omega)$, with both acting boundedly in those settings. More can be said;
utilizing the standard (abuse of) notation for the quotient space ${L^2_0(\Omega) =}L^2(\Omega) /\mathbb R \equiv \{ f \in L^2(\Omega) ~:~ \int_{\Omega} f =0\},$ we note that the divergence~
$[\nabla \cdot] : \mathbf V \to L^2_0(\Omega)$ is invertible and $\nabla : L^2_0(\Omega) \to \mathbf V'$ is an isomorphism.

\subsection{Elasticity Operator $\cE$}\label{elasticop}
In general, the elasticity operator associated to isotropic homogenous media is given by $$-(\lambda_e+\mu_e)\nabla(\nabla \cdot \bu)-\mu_e \Delta \bu=-\nabla\cdot[2\mu_e \varepsilon(\bu)+\lambda_e (\nabla \cdot \bu)\mathbf I].$$ Since we have taken $\mu_e=\lambda_e=1$ here, we consider an operator $\cE(\bu)$, whose action in distribution is given by:
$$\cE_0(\bu) = -\nabla \cdot [2\varepsilon(\bu)+\nabla\cdot \bu]=-2\nabla(\nabla \cdot \bu)-\Delta \bu.$$ This differential action is naturally associated to the symmetric bilinear form $e(\cdot,\cdot): \bV \times \bV \to \mathbb R$ given in \eqref{bil}. In the standard way, for $ \bu \in \bV$, we can define $\mathcal E(\bu) \in \mathbf V'$ by  $\langle \mathcal E(\bu), \cdot \rangle_{\mathbf V'\times \mathbf V}$. By restricting the action of the bilinear form, we can identify an unbounded operator $\cE: \mathbf L^2(\Omega) \to \mathbf L^2(\Omega)$ that encodes the homogeneous Dirichlet boundary conditions.  This is to say, $\mathcal E$ is the operator with domain 
$$\mathcal D(\mathcal E) \equiv \{ \bu \in \bV~:~\mathcal E_0(\bu) \in \mathbf L^2(\Omega)\}.$$
Indeed, $\cE$ as above is a $\mathbf V \to \bV'$ isomorphism \cite{show1},  and $\mathcal E: \mathbf L^2(\Omega) \to \mathbf L^2(\Omega) $ is positive, self-adjoint, and an isomorphism from $\mathcal D(\cE) \to \mathbf L^2(\Omega)$ (the latter invokes elliptic regularity and is stated precisely below).

In the analysis of the momentum equation, we consider a given a $p \in L^2(\Omega)$ (and thus $\nabla p \in \bV'$ \cite{temam,show1}) and produce a corresponding $\bu \in \bV$ which satisfies the stationary elasticity equation, which we will frequently write as 
\begin{equation}\label{convenient}\cE(\bu)=-\nabla p+\bF \in \mathbf V'.\end{equation}

This leads directly to the following lemma:
\begin{lemma}\label{elasticity} Given $\mathbf G \in \mathbf V'$,  we can consider the elasticity problem 
\begin{equation}\begin{cases}\label{ptodmap}
\cE(\bu) = \mathbf G & \in \bV' \\
\bu = 0 &~ \text{on}~\Gamma.
\end{cases}
\end{equation}
This problem is well-posed in the standard weak sense \cite{kesavan,ciarlet}, with a solution $\bu \in \bV$ and  stability estimate
$$||\bu||_{\mathbf V} \le C_w || \mathbf G||_{\mathbf V'},~~\forall \mathbf u \in \mathbf V.$$

Moreover, as $\Omega$ is of class $\mathcal C^2$, classical elliptic regularity applies \cite{ciarlet,temam}. Hence, if $\mathbf G \in \mathbf L^2(\Omega)$, then we have that $\mathbf u \in \mathbf H^2(\Omega)\cap \bV,$ and 
$$||\bu ||_{\mathbf H^2(\Omega)} \le C_r||\mathbf G||_{\mathbf L^2(\Omega)}.$$
 \end{lemma}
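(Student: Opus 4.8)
The plan is to prove the two assertions in sequence: weak well-posedness of \eqref{ptodmap} via the Lax--Milgram theorem applied to the bilinear form $e(\cdot,\cdot)$ from \eqref{bil}, and then the $\mathbf H^2(\Om)$-bound by invoking classical elliptic regularity for the constant-coefficient Lam\'e-type system on the $\mathcal C^2$ domain $\Om$. First I would check the two hypotheses of Lax--Milgram for $e:\bV\times\bV\to\reals$. Boundedness is immediate from Cauchy--Schwarz, since each of the three terms in \eqref{bil} is an $L^2$ pairing of first-order derivatives. For coercivity I would record the identity $\int_\Om \partial_j u^i\,\partial_i u^j\,d\mathbf x=\int_\Om(\grad\cdot\bu)^2\,d\mathbf x$, valid for $\bu\in (C_0^\infty(\Om))^3$ by integrating by parts twice and hence, by density, for all $\bu\in\bV$; this gives $e(\bu,\bu)=2\|\grad\cdot\bu\|_\Om^2+\|\grad\bu\|_\Om^2\ge\|\grad\bu\|_\Om^2$, which Poincar\'e's inequality upgrades to $e(\bu,\bu)\gtrsim\|\bu\|_{\mathbf H^1(\Om)}^2$. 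Equivalently, one may pass to the strain form $e(\bu,\bu)=2(\varepsilon(\bu),\varepsilon(\bu))+\|\grad\cdot\bu\|_\Om^2$ and use Korn together with Poincar\'e, as in the text; either way $e$ is coercive with respect to the norm on $\bV$.

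With boundedness and coercivity in hand, since $\mathbf G\in\bV'$ defines a bounded linear functional $\bw\mapsto\langle\mathbf G,\bw\rangle_{\bV'\times\bV}$, Lax--Milgram produces a unique $\bu\in\bV$ with $e(\bu,\bw)=\langle\mathbf G,\bw\rangle$ for all $\bw\in\bV$, which is exactly the weak formulation $\cE(\bu)=\mathbf G$ in $\bV'$. Testing with $\bw=\bu$ and using coercivity, $c\|\bu\|_{\bV}^2\le e(\bu,\bu)=\langle\mathbf G,\bu\rangle\le\|\mathbf G\|_{\bV'}\|\bu\|_{\bV}$, which gives the stated estimate with $C_w=c^{-1}$.

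For the regularity assertion, suppose now $\mathbf G\in\mathbf L^2(\Om)$. The weak solution then satisfies, in $\mathcal D'(\Om)$, the system $-\Delta\bu-2\grad(\grad\cdot\bu)=\mathbf G$ with $\bu|_\Gamma=0$, i.e. the standard displacement boundary value problem for isotropic linear elasticity, which is strongly elliptic (Legendre--Hadamard) with Dirichlet data satisfying the complementing condition. Since $\Gamma$ is of class $\mathcal C^2$, the Agmon--Douglis--Nirenberg $L^2$ regularity theory (as recorded in \cite{kesavan,ciarlet,temam}) yields $\bu\in\mathbf H^2(\Om)\cap\bV$ with $\|\bu\|_{\mathbf H^2(\Om)}\le C\big(\|\mathbf G\|_{\mathbf L^2(\Om)}+\|\bu\|_{\mathbf L^2(\Om)}\big)$; absorbing the lower-order term via the previous step, $\|\bu\|_{\mathbf L^2(\Om)}\le C\|\bu\|_{\bV}\le CC_w\|\mathbf G\|_{\bV'}\le C'\|\mathbf G\|_{\mathbf L^2(\Om)}$, produces the claimed bound $\|\bu\|_{\mathbf H^2(\Om)}\le C_r\|\mathbf G\|_{\mathbf L^2(\Om)}$.

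The only step requiring any real care is the last one: confirming, or more realistically correctly citing, that the Lam\'e operator $\cE_0$ with Dirichlet data falls under the hypotheses of the classical $\mathbf H^2$ a priori estimate and that $\mathcal C^2$ boundary regularity is enough. The Lax--Milgram part and the stability estimate are entirely routine. Since the elliptic fact is standard for the isotropic elasticity system, I would simply invoke the cited references rather than reproduce the argument.
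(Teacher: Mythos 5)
Your proof is correct and follows exactly the standard route (Lax--Milgram for the coercive bilinear form $e(\cdot,\cdot)$, then Agmon--Douglis--Nirenberg $\mathbf H^2$ regularity for the Lam\'e system on a $\mathcal C^2$ domain) that the paper itself does not write out but simply delegates to the cited references. The coercivity identity $(\nabla\bu,\nabla\bu^T)=\|\nabla\cdot\bu\|^2_{\Omega}$ for $\bu\in\bV$ and the absorption of the lower-order term in the a priori $\mathbf H^2$ estimate are both handled correctly.
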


Unlike \cite{bgsw}, we are working  with a smooth boundary, composed of a single Dirichlet component upon which both pressure $p$ and displacement $\bu$ are zero.  Thus classic elliptic theory can be used  for displacement $\bu$ when $p \in V$ and $\bF \in \mathbf L^2(\Omega)$. When $\mathbf F = \mathbf 0$ in \eqref{convenient}, we have
\begin{equation}\label{ptod} p \in V ~\implies\nabla p \in L^2(\Omega)~\implies~\cE^{-1}(-\nabla p) = \bu \in \mathbf H^2(\Omega)\cap \bV~\implies~\nabla \cdot \bu \in \mathbf H^1(\Omega).\end{equation} Such regularity was not available in \cite{bgsw}, where a more complex, physically-motivated boundary configuration was considered.

\subsection{Diffusion Operator $A_{z(t)}$}

For a smooth $z \in C^1([0,T] \times \Omega)$, we define the linear operator $A_{z(t)}: V \to V'$ by 
\begin{equation}\label{Aaction} A_{z(t)} p = -\nabla \cdot[ k (z(t)) \nabla p],~~\forall p \in  V,\end{equation}
where $k(z(t))$ is interpreted  as a Nemitskii operator for  the given function $z(\mathbf x,t)$ as in \ref{Assumpk}. 
\begin{remark} In practice we will consider this operator through its bilinear form (defined below) when $z \in L^2(0,T;L^2(\Omega))$, considered a.e. $t$.\end{remark}

If we assume that $z \in L^2(0,T; H^1(\Omega))$, then we have an unbounded operator $A_{z(t)}: L^2(\Omega) \to L^2(\Omega)$ with domain $\mathcal D(A_{z(t)}) = H^2(\Omega) \cap V$ and action given by \eqref{Aaction} with associated  bilinear form
\begin{equation}
A[p,q; z(t)]= (k(z(t)\nabla p,\nabla q),~~\forall~p,q \in V,~a.e.~t \in [0,T].
\end{equation}
As noted above, the bilinear form associated to the weak form of $A_{z(t)}$ requires only that $z \in L^2(0,T;L^2(\Omega))$, and can be obtained via density.
When $k\equiv ~const$, $A_{z(t)}$ is just a multiple of the standard Dirichlet Laplacian. In the setting at hand, for a.e. $t\in[0,T]$, (i) $A_{z(t)}$ is a maximal monotone operator, (ii) the bilinear form $A[\cdot, \cdot; z(t)]$ is continuous and coercive on $V$, and (iii) $A_{z(t)}$ is positive and self-adjoint as an unbounded operator on $L^2(\Omega)$.

\subsection{Pressure to Dilation Map}\label{Bdef}
The pressure to dilation map was introduced in the setting of Biot problems in \cite{frenchpaper}, and developed and used extensively in \cite{showmono,show1}. It allows one to reduce the $(\bu,p)$ system in \eqref{1}--\eqref{3} to an implicit evolution problem, such as those studied extensively in \cite{showmono,show1}. This operator is a useful, descriptive tool in the construction of approximate solutions, and the subsequent analyses.

Consider the map $B: L^2(\Omega) \to L^2(\Omega)$, defined through the gradient and divergence operators (as above) by:  
\begin{equation} \label{Bdef*} Bp =-\nabla \cdot \mathcal E^{-1}(\nabla p),\end{equation} motivated by the problem above in \eqref{ptodmap}. Indeed, we have that $p \in H^s(\Omega) \implies \nabla p \in \mathbf H^{s-1}(\Omega)$ with $p \mapsto \nabla p$ continuous in that setting \cite{temam,kesavan}. In the specific case when $p \in L^2(\Omega)$,~~ $\nabla p \in \mathbf H^{-1}(\Omega)=\mathbf V'$. Invoking the properties of the elliptic operator $\cE$, we see that indeed $B \in \mathscr L(L^2(\Omega))$. 
Similarly, the action of $B$ extends readily to $V$. Considering $B$ as above, if $p \in V$, then as above in \eqref{ptod}, $Bp \in H^1(\Omega)$
Similarly, we obtain immediately that $B \in \mathscr L(H^s(\Omega)\cap \bV, H^s(\Omega))$ for $s \ge 1$ via the elliptic regularity associated to $\cE$. 
Note that when $\bF\equiv 0$ in the elasticity equation, as in \eqref{ptod}, we have $Bp = \nabla \cdot \bu$.
\begin{remark} Consider $p \in V$, and, as above $Bp = \nabla \cdot \bu \in H^1(\Omega)$.
Although it is not clear that $\nabla \cdot \bu \in V$, we do know that $Bp=\nabla \cdot \bu \in \text{div}[\bV]$, which does carry additional information, in particular that $Bp \in H^1(\Omega) \cap L^2(\Omega)/\mathbb R$. Moreover, $B \in \mathscr L(L^2_0(\Omega))$.\end{remark}

Consolidating the discussions above we have:
\begin{lemma}\label{elipreg} Given $p \in V$ and $\bF\in \mathbf L^2(\Omega)$, the corresponding solver $\cE^{-1}(-\nabla p+\bF)\in \mathbf H^2(\Omega) \cap \bV$ with associated continuity bound. When $\bF\equiv 0$ and $p \in V$, we have $Bp = \nabla\cdot \bu \in H^{1}(\Omega)$ for $\cE(\bu)=-\nabla p$. From this we obtain  that 
$$B: H_0^1(\Omega) \to H^1(\Omega),~~{\text{continuously}}.$$ 
\end{lemma}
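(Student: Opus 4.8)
The plan is to assemble the statement directly from the ingredients already established, namely Lemma \ref{elasticity}, the chain of implications \eqref{ptod}, and the definition \eqref{Bdef*} of the pressure-to-dilation map $B$. First I would address the general solver: given $p \in V$ and $\bF \in \mathbf L^2(\Omega)$, I set $\mathbf G := -\nabla p + \bF$ and observe that $\nabla p \in \mathbf L^2(\Omega)$ since $p \in V = H_0^1(\Omega)$, so that $\mathbf G \in \mathbf L^2(\Omega) \subset \mathbf V'$. Applying the second (elliptic regularity) part of Lemma \ref{elasticity} with this $\mathbf G$ yields $\bu = \cE^{-1}(-\nabla p + \bF) \in \mathbf H^2(\Omega) \cap \bV$ together with the bound $\|\bu\|_{\mathbf H^2(\Omega)} \le C_r \|\mathbf G\|_{\mathbf L^2(\Omega)} \le C_r(\|\nabla p\|_{\mathbf L^2(\Omega)} + \|\bF\|_{\mathbf L^2(\Omega)})$, which is the claimed continuity bound.

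Next I would specialize to $\bF \equiv 0$. Then $\cE(\bu) = -\nabla p$, and by the very definition \eqref{Bdef*}, $Bp = -\nabla \cdot \cE^{-1}(\nabla p) = \nabla \cdot \bu$. From the previous paragraph $\bu \in \mathbf H^2(\Omega) \cap \bV$, hence $\nabla \cdot \bu \in H^1(\Omega)$ (the divergence drops one derivative), which gives $Bp \in H^1(\Omega)$; this is exactly the content already recorded in \eqref{ptod}. For the final mapping statement, I would note that the assignment $p \mapsto Bp = \nabla \cdot \bu$ is linear, and composing the bounded maps $p \mapsto \nabla p$ (continuous $V \to \mathbf L^2(\Omega)$), $\mathbf G \mapsto \cE^{-1}\mathbf G$ (continuous $\mathbf L^2(\Omega) \to \mathbf H^2(\Omega)\cap\bV$ by elliptic regularity), and $\bu \mapsto \nabla \cdot \bu$ (continuous $\mathbf H^2(\Omega) \to H^1(\Omega)$) produces a continuous linear operator $B : H_0^1(\Omega) \to H^1(\Omega)$, with operator norm controlled by $C_r$ times the norms of the gradient and divergence maps.

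There is essentially no hard step here; the lemma is a bookkeeping consolidation of facts already proved. The only point requiring a modicum of care is ensuring the elliptic regularity invoked in Lemma \ref{elasticity} is genuinely applicable, which rests on the standing hypothesis that $\Omega$ is of class $\mathcal C^2$ (stated in Section 2.1) and on the fact that $\cE$ is the elasticity operator with homogeneous Dirichlet conditions, for which the $\mathbf H^2$-regularity theory \cite{kesavan,ciarlet,temam} holds; I would simply cite this. One should also be mindful not to claim $Bp \in V$ — only $Bp \in H^1(\Omega)$ (indeed $Bp \in H^1(\Omega) \cap L^2_0(\Omega)$, as noted in the preceding remark) — so I would phrase the conclusion precisely as $B : H_0^1(\Omega) \to H^1(\Omega)$ continuously, matching the statement.
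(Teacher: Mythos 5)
Your proposal is correct and follows exactly the route the paper intends: the lemma is stated as a consolidation of Lemma \ref{elasticity}, the implication chain \eqref{ptod}, and the definition \eqref{Bdef*} of $B$, and your assembly of these ingredients (including the composition-of-bounded-maps argument for continuity and the caveat that one only gets $Bp\in H^1(\Omega)$, not $Bp\in V$) matches the paper's reasoning. No gaps.
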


We note some kernel and range properties of the $B$ operator (closely following \cite{show1} and utilizing the properties of $\nabla$ and $\text{divergence}$ \cite{temam}). 
 \begin{lemma}\label{Binvert}
Considered as a mapping on $H_0^1(\Omega)$, $B$ is injective. Considered as a mapping on $L^2(\Omega)$, $ker(B)=\{\text{constants}\}$, and hence $B$ is injective on $L^2(\Omega)/\mathbb R$.
 
With respect to ranges, we have the following:
$$B(L^2(\Omega)) \subseteq L^2(\Omega)/\mathbb R,~~B(H_0^1(\Omega)) \subseteq H^1(\Omega)/\mathbb R.$$
\end{lemma}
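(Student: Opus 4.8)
\textbf{Proof proposal for Lemma \ref{Binvert}.}

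The plan is to deduce all assertions directly from the factorization $Bp = -\nabla \cdot \mathcal E^{-1}(\nabla p)$ together with the duality properties of $\nabla$ and $[\nabla\cdot]$ recorded earlier (namely: $\nabla : L^2_0(\Omega) \to \mathbf V'$ is an isomorphism, $[\nabla \cdot] : \mathbf V \to L^2_0(\Omega)$ is onto, and $\mathcal E : \mathbf V \to \mathbf V'$ is an isomorphism). First I would treat injectivity on $L^2(\Omega)$. Suppose $Bp = 0$, i.e. $-\nabla \cdot \mathcal E^{-1}(\nabla p) = 0$. Writing $\bu = \mathcal E^{-1}(\nabla p) \in \bV$, this says $\nabla \cdot \bu = 0$, so testing the identity $\mathcal E(\bu) = \nabla p$ against $\bu$ itself and integrating by parts gives $e(\bu,\bu) = \langle \nabla p, \bu\rangle = -(p,\nabla\cdot\bu) = 0$; by coercivity of $e(\cdot,\cdot)$ on $\bV$ this forces $\bu = 0$, hence $\nabla p = \mathcal E(\bu) = 0$ in $\mathbf V'$, and therefore $p$ is constant (as $\nabla : L^2_0(\Omega)\to\mathbf V'$ is injective). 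Conversely $B(\text{const}) = 0$ since $\nabla(\text{const}) = 0$. This identifies $\ker(B) = \{\text{constants}\}$ on $L^2(\Omega)$ and yields injectivity on the quotient $L^2(\Omega)/\mathbb R$. For injectivity on $H_0^1(\Omega)$: if $p \in V$ with $Bp = 0$, the same argument shows $p$ is constant, but the only constant in $H_0^1(\Omega)$ is $0$, so $B$ is injective there.

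Next I would establish the range inclusions. For $p \in L^2(\Omega)$, $Bp = \nabla\cdot\bu$ with $\bu = \mathcal E^{-1}(\nabla p) \in \bV$; since $[\nabla\cdot]$ maps $\bV$ into $L^2_0(\Omega)$ (integration by parts against the constant test function, using $\bu\big|_\Gamma = 0$), we get $Bp \in L^2(\Omega)/\mathbb R$, i.e. $B(L^2(\Omega)) \subseteq L^2(\Omega)/\mathbb R$. For $p \in H_0^1(\Omega)$, Lemma \ref{elipreg} already gives $Bp = \nabla\cdot\bu \in H^1(\Omega)$, and combining with the mean-zero property just noted yields $Bp \in H^1(\Omega)\cap (L^2(\Omega)/\mathbb R)$, which I will write as $H^1(\Omega)/\mathbb R$; hence $B(H_0^1(\Omega)) \subseteq H^1(\Omega)/\mathbb R$.

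The only mildly delicate point — and the one I would flag as the main obstacle, though it is not a serious one — is being careful about the two distinct meanings of ``$B$ on $L^2(\Omega)$'' versus ``$B$ on $H_0^1(\Omega)$'': the kernel genuinely differs (constants versus $\{0\}$), and one must not conflate the statement ``$B$ is injective on $H_0^1(\Omega)$'' with ``$\ker B = \{0\}$ on $L^2(\Omega)$'', which is false. The resolution is simply to observe that $H_0^1(\Omega)$ contains no nonzero constants, so the general computation $\ker(B) = \{\text{constants}\}$ restricts correctly. A secondary subtlety is justifying that $\nabla\cdot\bu$ has zero mean for every $\bu \in \bV$ — this is exactly the integration-by-parts identity $\int_\Omega \nabla\cdot\bu\,d\mathbf x = \int_\Gamma \bu\cdot\mathbf n\,dS = 0$, valid here since the boundary is $\mathcal C^2$ and $\bu$ vanishes on $\Gamma$ — and that $\nabla$ is injective from $L^2_0(\Omega)$ to $\mathbf V'$, which was recorded as an isomorphism statement at the start of Section \ref{opsprops}. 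With these in hand the proof is a short assembly of facts already available.
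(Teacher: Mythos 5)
Your proof is correct, and it follows exactly the route the paper intends: the paper states this lemma without proof, deferring to the properties of $\nabla$, $[\nabla\cdot]$, and $\cE$ recorded at the start of Section \ref{opsprops} (and to \cite{show1,temam}), and your argument is precisely the assembly of those facts — the kernel computation via $e(\bu,\bu)=-(p,\nabla\cdot\bu)=0$ plus coercivity, the identification of $\ker\nabla$ with constants, and the mean-zero property of $\nabla\cdot\bu$ for $\bu\in\bV$. The two subtleties you flag (constants versus $\{0\}$ depending on the domain of $B$, and the zero-mean range) are exactly the right ones, and your resolutions are sound.
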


Finally, we have that $B$ is a self-adjoint, monotone operator when considered on $L^2(\Omega)$ \cite{show1,bgsw}.
\begin{lemma}\label{posop} Considering  $B \in \mathscr L(L^2(\Omega))$, it is a non-negative, self-adjoint operator.
\end{lemma}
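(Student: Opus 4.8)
\textbf{Proof proposal for Lemma \ref{posop}.}

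The plan is to verify the two assertions---self-adjointness and non-negativity of $B \in \mathscr L(L^2(\Omega))$---directly from the definition $Bp = -\nabla \cdot \mathcal E^{-1}(\nabla p)$ and the duality between $\nabla$ and $-[\nabla \cdot]$ recorded in Section \ref{opsprops}. First I would fix $p, q \in L^2(\Omega)$ and set $\bu = \mathcal E^{-1}(\nabla p) \in \bV$ and $\bw = \mathcal E^{-1}(\nabla q) \in \bV$; these are well defined since $\nabla p, \nabla q \in \bV'$ and $\mathcal E: \bV \to \bV'$ is an isomorphism (Lemma \ref{elasticity}). Then I would compute
\[
(Bp, q)_{L^2(\Omega)} = (-\nabla \cdot \bu, q)_{L^2(\Omega)} = \langle \nabla q, \bu \rangle_{\bV' \times \bV} = \langle \mathcal E(\bw), \bu \rangle_{\bV' \times \bV} = e(\bw, \bu),
\]
using that $-[\nabla \cdot] : \bV \to L^2(\Omega)$ is dual to $\nabla : L^2(\Omega) \to \bV'$ for the middle identity and the definition of $\mathcal E$ via the bilinear form $e(\cdot,\cdot)$ for the last. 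Since $e(\cdot,\cdot)$ is symmetric (see \eqref{bil} and the formal identities preceding it), $e(\bw,\bu) = e(\bu,\bw)$, and running the same computation with the roles of $p$ and $q$ swapped gives $(Bp,q)_{L^2(\Omega)} = e(\bu,\bw) = e(\bw,\bu) = (Bq,p)_{L^2(\Omega)} = (p, Bq)_{L^2(\Omega)}$. As $B$ is bounded on $L^2(\Omega)$ (established in Section \ref{Bdef}), symmetry on all of $L^2(\Omega)$ is exactly self-adjointness.

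For non-negativity, take $q = p$ in the computation above: $(Bp, p)_{L^2(\Omega)} = e(\bu, \bu) = \|\bu\|_{\bV}^2 \ge 0$, where the last step uses that $e(\cdot,\cdot)$ is the norm-inducing form on $\bV$ (equivalent to the $\mathbf H^1$ norm by Korn and Poincar\'e). This simultaneously shows $(Bp,p)_{L^2(\Omega)} = 0$ only when $\bu = \mathcal E^{-1}(\nabla p) = 0$, i.e. $\nabla p = 0$, i.e. $p$ constant---consistent with $\ker(B) = \{\text{constants}\}$ from Lemma \ref{Binvert}, though this refinement is not needed for the statement.

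I do not anticipate a genuine obstacle here; the lemma is essentially a bookkeeping consequence of the dual pairing $\langle \nabla(\cdot), \cdot\rangle_{\bV'\times\bV} = -(\cdot, \nabla\cdot(\cdot))_{L^2}$ together with the symmetry and positivity of $e(\cdot,\cdot)$. The only point requiring a little care is making sure every pairing is interpreted in the correct space: $\nabla p$ and $\nabla q$ live in $\bV'$ (not $L^2$), so the identity $(-\nabla\cdot\bu, q)_{L^2(\Omega)} = \langle \nabla q, \bu\rangle_{\bV'\times\bV}$ must be read as the duality pairing, and one should note that $-\nabla\cdot\bu \in L^2_0(\Omega) \subset L^2(\Omega)$ so the left-hand $L^2$ inner product does make sense. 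Once the spaces are tracked honestly, the argument is a two-line symmetric-bilinear-form computation.
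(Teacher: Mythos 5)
Your proof is correct. The paper itself gives no argument for Lemma \ref{posop}, deferring instead to \cite{show1,bgsw}, and your computation---$(Bp,q)=\langle \nabla q, \mathcal E^{-1}(\nabla p)\rangle_{\bV'\times\bV}=e(\bw,\bu)$ via the $\nabla$/$-\nabla\cdot$ duality, followed by symmetry and coercivity of $e(\cdot,\cdot)$ on $\bV$---is exactly the standard argument those references rely on; the spaces are tracked correctly throughout.
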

By the standard construction \cite{pazy,ciarletbook}, the self-adjoint operator $B^{1/2} \in \mathscr L(L^2(\Omega))$ is obtained with characterizing property
$$(Bp,q)_{L^2(\Omega)}=(B^{1/2}p,B^{1/2}q)_{L^2(\Omega)} = (p,Bq)_{L^2(\Omega)},~~\forall~p,q \in L^2(\Omega).$$
A central issue in the analysis here is that $B \in \mathscr L(L^2(\Omega))$ need not be coercive in that setting. However, in the case where we consider compressible effects,  the operator $c_0\mathbf I+B : L^2(\Omega) \to L^2(\Omega)$ is coercive. We will use this critically and repeatedly below.

\begin{corollary}\label{Binvert*} Let $c_0>0$. Then the operator $c_0\mathbf I +B~:~L^2(\Omega) \to L^2(\Omega)$ is an isomorphism. \end{corollary}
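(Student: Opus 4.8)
\textbf{Proof proposal for Corollary \ref{Binvert*}.}
The plan is to argue that $c_0\mathbf I + B$ is bounded, self-adjoint, and coercive on $L^2(\Omega)$, and then invoke Lax--Milgram (or equivalently the bounded self-adjoint case of the Lax--Milgram theorem). First I would recall from Lemma \ref{posop} that $B \in \mathscr L(L^2(\Omega))$ is bounded, non-negative, and self-adjoint; hence $c_0\mathbf I+B$ is also bounded and self-adjoint, and for any $p \in L^2(\Omega)$ we have the lower bound
\begin{equation}\label{coerc}
\big((c_0\mathbf I + B)p,\,p\big)_{L^2(\Omega)} = c_0\|p\|_{L^2(\Omega)}^2 + (Bp,p)_{L^2(\Omega)} \ge c_0\|p\|_{L^2(\Omega)}^2,
\end{equation}
using $(Bp,p)_{L^2(\Omega)} = \|B^{1/2}p\|_{L^2(\Omega)}^2 \ge 0$. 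Since $c_0>0$, inequality \eqref{coerc} is precisely coercivity of the associated bilinear form on $L^2(\Omega)$.

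With coercivity and boundedness in hand, the Lax--Milgram theorem applied to the symmetric bilinear form $a(p,q) = ((c_0\mathbf I + B)p, q)_{L^2(\Omega)}$ on the Hilbert space $L^2(\Omega)$ yields that $c_0\mathbf I + B : L^2(\Omega) \to L^2(\Omega)$ is a bijection with bounded inverse; indeed one obtains directly $\|(c_0\mathbf I+B)^{-1}\|_{\mathscr L(L^2(\Omega))} \le c_0^{-1}$. Alternatively, and perhaps more transparently, one can argue by hand: \eqref{coerc} gives $\|(c_0\mathbf I+B)p\|_{L^2(\Omega)} \ge c_0\|p\|_{L^2(\Omega)}$, so the operator is injective with closed range; self-adjointness then forces the range to be dense (its orthogonal complement lies in the kernel), hence all of $L^2(\Omega)$; and the same lower bound shows the inverse is bounded. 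Either route gives that $c_0\mathbf I + B$ is an isomorphism of $L^2(\Omega)$.

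There is really no serious obstacle here: the content is entirely front-loaded into Lemma \ref{posop} (non-negativity and self-adjointness of $B$), which has already been established, and the corollary is a one-line consequence of a shift by a positive multiple of the identity. The only point requiring the slightest care is making sure one invokes the correct functional-analytic statement — coercivity plus boundedness on a Hilbert space — rather than, say, trying to exhibit the inverse explicitly; but as noted, the quantitative bound $\|(c_0\mathbf I+B)^{-1}\| \le c_0^{-1}$ falls out immediately. It is worth recording this bound, since the singular-limit argument as $c_0 \searrow 0$ later in the paper will need to track exactly how this constant degenerates.
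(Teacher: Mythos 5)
Your proposal is correct and follows essentially the same route as the paper: coercivity via $((c_0\mathbf I+B)p,p) \ge c_0\|p\|^2$ combined with boundedness of $B$, then Lax--Milgram applied to the associated bilinear form on $L^2(\Omega)$. The extra observations you add (the alternative closed-range/self-adjointness argument and the explicit bound $\|(c_0\mathbf I+B)^{-1}\| \le c_0^{-1}$) are sound but not part of the paper's proof, which stops at unique solvability with a stability bound.
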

\begin{proof}
By positivity, we note that for $p \in L^2(\Omega)$ 
$$([c_0\mathbf I +B] p,p)_{L^2(\Omega)} = c_0||p||^2+(Bp,p) \ge c_0||p||^2.$$ Hence, the operator $c_0\mathbf I+B$ is coercive on $L^2(\Omega)$. Since $B \in \mathscr L(L^2(\Omega))$, surjectivity follows immediately from classical Lax-Milgram applied to the form
$$\beta(p,q) = \big([c_0\mathbf I+B]p,q\big)_{L^2(\Omega)}.$$
 Which is to say that $$ \beta(p,q)= (f, q)_{\Omega},~~\forall q \in L^2(\Omega)$$ is uniquely solvable with associated stability bound. 
\end{proof}

We conclude this section with some additional remarks about the $B$ operator that arise critically in the context of previous approaches to the problem at hand.
\begin{itemize}
\item Since $B$ implicitly invokes an elliptic solver, its behaviors on $V'$ or any $H^{-s}(\Omega)$---regularity and continuity properties, kernel and range---are {\bf not} in the realm of standard elliptic theory.
\item It is not clear that $B \in \mathscr L (H^{-1}(\Omega))$, or that $B(H_0^1(\Omega)) \subset H_0^1(\Omega)$. \end{itemize}

\subsection{Translation to Eliminate Momentum Source $\bF$}\label{translation}
In this subsection we provide formal ``translations" to the linear and nonlinear problems that allow us to consider the problem with $\bF \equiv 0$. In latter sections, to simplify the analysis, we will operate on the translated problem. After obtaining the principal result in those sections, we will refer to this section and translate back to obtain (linear and nonlinear) results for the original system.

We first note that it is sufficient to solve the {\em linear problem}---where $k=k(z)$ for $z \in L^2(0,T;L^2(\Omega))$ a given function---with $\bF \equiv 0$ by a standard translation. Consider 
\begin{equation}\label{abstractform}
\begin{cases}
\cE(\bu)=-\nabla p  &~ \in L^2(0,T; \mathbf V')\\
\zeta_t+A_{z(\cdot)} p=S&~ \in L^2(0,T;V')\\
\zeta = c_0p+\nabla \cdot \bu &~\in L^2(0,T; L^2(\Omega))\\
\zeta(0)=d_0&~ \in L^2(\Omega)
\end{cases}
\end{equation}
Indeed, as the elasticity equation is elliptic and $\bF \in L^2(0,T;\mathbf L^2(\Omega))$, for a.e. ~$t \in [0,T]$ we can simply write \begin{equation}\label{ellipticsolver}\bu_{\bF}(t) = \cE^{-1}(\mathbf F(t)) \in \mathbf H^2(\Omega) \cap \bV.\end{equation} Additionally, with  the regularity hypotheses of our main theorem  $\bF \in L^2(0,T;\mathbf L^2(\Omega)) \cap H^1(0,T;\bV')$, we have that $\bu_{\bF} \in L^2(0,T; \mathbf H^2(\Omega) \cap \bV)\cap H^1(0,T; \bV)$. Then, considering the variable $\bw = \bu-\bu_{\bF}$, we note that $\bu$ solves \eqref{abstractform} if and only if $\bw$ solves
\begin{equation}\label{abstractform*}
\begin{cases}
\cE(\bw)=-\nabla p  &~ \in L^2(0,T; \mathbf V')\\
c_0 p_t + \nabla \cdot \bw_t+A_{z(\cdot)} p=S+\nabla \cdot \bu_{\bF,t}&~ \in L^2(0,T;V')\\
c_0 p(0) + \nabla\cdot\bw(0)=d_0-\nabla \cdot \bu_{\bF}(0)&~ \in L^2(\Omega).
\end{cases}
\end{equation}
Hence, by re-scaling $S \in L^2(0,T;V')$ and $d_0= \zeta(0) \in L^2(\Omega)$, we obtain an equivalent linear problem for a given $z$ with $\bF \equiv 0$.

In the case of the nonlinear problem, where $A_{z(t)}=A_{\zeta(t)}$ for $\zeta = c_0p+\nabla \cdot \bu$, an additional step is needed. We note that if $\bw= \bu-\bu_{\bF}$ as above, the fluid content has expression $$\zeta=c_0p+\nabla\cdot \bu=c_0p+\nabla \cdot \bw+\nabla \cdot \bu_{\bF}.$$ Since $\nabla \cdot \bu_{\bF} \in L^2(0,T; H^1(\Omega)) \cap H^1(0,T; L^2(\Omega))$, we have that $\nabla \cdot \bu_{\bF} \in C([0,T];L^2(\Omega))$ \cite{evans,showmono}. 
Hence, for any $k(\cdot)$ as in \eqref{Assumpk}, we introduce the function $k_{\bF}(\cdot) \in C(\mathbb R)$ representing the $\bu_{\bF}$-translate of $k(\cdot)$, namely \begin{equation}\label{ktrans} k_{\mathbf F}(\cdot)= k(\cdot +\nabla \cdot \bu_{\bF}).\end{equation}
Since $k(\cdot)$ satisfies \ref{Assumpk}, we obtain immediately that  $k_{\bF}(\cdot)$ satisfies the assumption as well.
Then, from the (abstract) strong form of the original problem,
\begin{equation}\label{abstractform**}
\begin{cases}
\cE(\bu)=-\nabla p+\bF  &~ \in L^2(0,T; \mathbf V')\\
\zeta_t-\nabla \cdot [k(\zeta(\cdot))\nabla p]=S&~ \in L^2(0,T;V')\\
\zeta=c_0p+\nabla \cdot \bu &~ \in L^2(0,T;L^2(\Omega))\\
c_0 p(0) + \nabla\cdot\bu(0)=d_0&~ \in L^2(\Omega),
\end{cases}
\end{equation}
we can write the system for $\bw = \bu-\bu_{\bF}$ as follows
\begin{equation}\label{abstractform***}
\begin{cases}
\cE(\bw)=-\nabla p  &~ \in L^2(0,T; \mathbf V')\\
[c_0 p + \nabla \cdot \bw]_t-\nabla \cdot [k_{\bF}(c_0p+\nabla \cdot \bw )\nabla p]=S+\nabla \cdot \bu_{\bF,t}&~ \in L^2(0,T;V')\\
c_0 p(0) + \nabla\cdot\bw(0)=d_0-\nabla \cdot \bu_{\bF}(0)&~ \in L^2(\Omega).
\end{cases}
\end{equation}
As in the linear case, for a fixed $\bF$ and $k$, by re-scaling $S$ and $d_0$ accordingly, and re-labeling $k \mapsto k_{\bF}$, we again obtain an equivalent problem with $\bF=0$.

\section{Existence of Solutions for the Linear Problem}\label{linearize}
In this section we recapitulate the relevant {\em linear} Theorem \ref{linearwellp*} to be invoked in our fixed point construction. As we noted in the Introduction and Remark \ref{showremark}, the abstract theory of time-dependent linear evolutions from \cite{indiana,showmono} produces an equivalent result. For self-containedness, and to tailor the analysis to the specific estimates utilized in later sections, we provide a brief discussion of the setup here, and also a traditional Galerkin construction of solutions in Appendix B.   \\

Recall that we explicitly assume that $c_0>0$. In line with the translation introduced in Section \ref{translation}, we consider the problem with $\bF \equiv 0$. Moreover, we retain the names for the datum $d_0 \in L^2(\Omega)$ and source $S \in L^2(0,T;V')$ (after updating them, as discussed in the previous section). Thus we consider the linear problem \eqref{abstractform} for a given $z \in L^2(0,T;L^2(\Omega))$.
Interpreting it weakly through the associated bilinear forms $e(\cdot,\cdot)$ and $A[\cdot,\cdot;z(t)],$ and taking the time derivative distributionally in $\mathscr D'(0,T)$ yields an equivalent formulation to our Definition \ref{zerosol} (see \cite{showmono}). Namely, we seek $\bu(z) \in L^2(0,T;\bV)$, $p(z) \in L^2(0,T;V)$ that solve \eqref{abstractform} weakly.

Using the pressure to dilation operator introduced in Section 3, we equivalently reformulate \ref{abstractform} as was done in \cite{show1,cao}  as the (implicit) initial  boundary value problem
\begin{equation}\label{weakpforgiventildep} 
\begin{cases}
[(c_0\mathbf I + B)p]_t -\nabla \cdot [k(z)\nabla p] = S, &  \Omega \times (0,T)\\
p = 0, & \Gamma \times (0,T)\\
(c_0\mathbf I+ B)p(0)= d_0, & \Omega
\end{cases}
\end{equation}

Recall that for $z \in L^2(0,T;L^2(\Omega))$, we defined the bilinear form 
\begin{equation}\label{Bil_form_A}
A[\cdot, \cdot;z(t)] : V \times V \to \reals \ \ \text{by}
\ \ A[p, q;z(t)] = (k(z(t))\nabla p,\nabla q)_{\Omega}.
\end{equation}

\medskip

Let us clearly state a weak formulation for (\ref{weakpforgiventildep}) above.
In the remaining part of this section, the notation $f'$ will denote differentiation in time, and recall the angle brackets  represent the duality pairing between $V'$ and $V$, i.e., $\langle \phi, g\rangle = \phi(g)=\langle\phi, g\rangle_{V'\times V}$. 

\begin{definition}\label{WeakSol} Given $z \in L^2(0,T;L^2(\Omega))$,
we say that $p \in L^2(0,T; V)$ with  $(c_0\mathbf I  + B)p \in L^2(0,T; H^1(\Om))$ and $[(c_0  + B)p]' \in  L^2(0,T; V')$ is a weak solution for (\ref{weakpforgiventildep}) provided that
\begin{enumerate}
\item For every $q \in L^2(0,T; V)$, 
\begin{equation}\label{varform1}
\int_0^T \big\langle[(c_0\mathbf I  + B)p]'(t), q(t)\big\rangle \ dt + \int_0^T A[p(t), q(t);z(t)]\ dt = \int_0^T \langle S(t), q(t)\rangle dt
\end{equation}
\item $\big[(c_0\mathbf I  + B)p\big](0) = d_0$ in the sense of $V'$. 
\end{enumerate}
\end{definition}

Note that since $(c_0 \mathbf I + B)p \in L^2(0,T; H^1(\Om))$ and $[(c_0 \mathbf I  + B)p]' \in  L^2(0,T; V')$, we have that $(c_0 \mathbf I  + B)p \in C([0,T]; V')$ and thus the initial condition makes sense in $V'$.
\begin{remark} With regard to the initial condition, although we only identify the initial condition in the sense of $V'$ (as is consistent with the general weak formulation for implicit equations) we also need $d_0 \in L^2(\Omega)$ for the construction at hand. Secondly, since we do not know that $[c_0\mathbf I+B]p \in V$, we cannot use the standard result \cite{evans} to obtain that $[c_0\mathbf I+B]p\in C([0,T];L^2(\Omega))$. \end{remark}

\medskip

\begin{remark}\label{equivvarforms}
Sometimes it is convenient to work with the following equivalent variational formulation for (\ref{varform1}):
\begin{equation}\label{varform1'}
\big\langle[(c_0 \mathbf I + B)p]', q\big\rangle + A[p, q;z(\cdot)] = \langle S, q\rangle,\ \text{for each}\ q \in V \ \ \text{and a.e. time}\ \ t \in [0,T].
\end{equation} 
We note that the real-valued function $t \mapsto \langle [(c_0\mathbf I  + B)p]', q\big\rangle $ belongs to $L^1_{loc}(0,T) \subset \mathcal{D}'(0,T)$,  and Bochner's theorem yields
$$\big\langle [(c_0 \mathbf I  + B)p]', q\big\rangle= \frac{d}{dt} ([c_0\mathbf I  + B]p(t), q)_{\Omega}\ \ \text{in} \ \ \mathcal{D}'(0,T).$$
Therefore \eqref{varform1'} can be  simply written in the form
$$ \frac{d}{dt} \big([c_0 \mathbf I  + B]p(t), q\big)_{\Omega} + A[p, q;z(t)] = \langle S(t), q\rangle,~\text{ in }~\mathcal{D}'(0,T), ~\text{ for all }~ q \in V. $$
\end{remark}

We now assert the existence of a weak solution as in Definition \ref{WeakSol} to the reduced problem described above in \eqref{weakpforgiventildep}. We state this as a lemma, as it will be used in the proof of Theorem \ref{th:main}; its proof is given in Appendix B.

\begin{lemma}\label{F_welldefined}
Let $z \in L^2(0,T;L^2(\Omega))$, $S \in L^2(0,T;V')$ and $d_0 \in  L^2(\Om)$. Then \eqref{weakpforgiventildep} has  weak solution, according to Definition \ref{WeakSol}.
\end{lemma}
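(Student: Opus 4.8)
The plan is to construct the weak solution of \eqref{weakpforgiventildep} by a Galerkin scheme based on the eigenfunctions of the pressure-problem operator, obtain a priori estimates that are uniform in the discretization parameter, and pass to the limit using weak/weak-$*$ compactness together with the monotonicity (or continuity) properties of the involved operators. Since $c_0>0$ is assumed, the operator $c_0\mathbf I+B$ is an isomorphism on $L^2(\Omega)$ (Corollary \ref{Binvert*}), which is what makes the implicit time derivative $[(c_0\mathbf I+B)p]_t$ controllable. I would work on the translated problem with $\bF\equiv 0$, as set up in Section \ref{translation}, so that $\zeta=c_0p+\nabla\cdot\bu$ and $Bp=\nabla\cdot\bu$.

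\textbf{Step 1: Galerkin setup.} Let $\{e_j\}_{j\ge1}\subset H^2(\Omega)\cap V$ be a smooth orthogonal basis of $L^2(\Omega)$ that is also orthogonal in $V$ (e.g.\ the eigenfunctions of the Dirichlet Laplacian). Set $V_m=\mathrm{span}\{e_1,\dots,e_m\}$ and look for $p_m(t)=\sum_{j=1}^m c_j^m(t)e_j$ solving, for each $q\in V_m$,
\begin{equation}\label{galerkin}
\frac{d}{dt}\big([c_0\mathbf I+B]p_m,q\big)_{\Omega}+A[p_m,q;z(t)]=\langle S(t),q\rangle,\qquad [c_0\mathbf I+B]p_m(0)=P_m d_0,
\end{equation}
where $P_m$ is the $L^2(\Omega)$-projection onto $V_m$. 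Writing this in coordinates gives a linear ODE system $M_m\dot c^m(t)=-K_m(t)c^m(t)+f_m(t)$, where $M_m$ has entries $([c_0\mathbf I+B]e_i,e_j)_{\Omega}$; by Lemma \ref{posop} and Corollary \ref{Binvert*} the matrix $M_m$ is symmetric positive definite, hence invertible, and $K_m(t)$, $f_m(t)$ have entries in $L^1(0,T)$ (using boundedness of $k(z(t))$ from Assumption \ref{Assumpk} and $S\in L^2(0,T;V')$). Carath\'eodory's existence theorem then yields an absolutely continuous solution $c^m$ on $[0,T]$.

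\textbf{Step 2: A priori estimates.} Take $q=p_m(t)$ in \eqref{galerkin}. Since $B$ is self-adjoint and nonnegative, $\frac{d}{dt}([c_0\mathbf I+B]p_m,p_m)_{\Omega}=2\big(c_0\|p_m\|^2+\|B^{1/2}p_m\|^2\big)'/2$ produces, after integration in time and using the lower bound $k(z(t))\ge k_1$,
\begin{equation}\label{apriori}
c_0\|p_m(t)\|^2+\|B^{1/2}p_m(t)\|^2+k_1\int_0^t\|p_m(s)\|_V^2\,ds\ \lesssim\ \|d_0\|_{L^2(\Omega)}^2+\int_0^T\|S(s)\|_{V'}^2\,ds,
\end{equation}
where the $\langle S,p_m\rangle$ term is absorbed by Young's inequality against the $k_1\|p_m\|_V^2$ term. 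Here I use $([c_0\mathbf I+B]p_m(0),p_m(0))_{\Omega}=(P_md_0,p_m(0))_{\Omega}\le\|d_0\|\,\|p_m(0)\|$ and then re-absorb via coercivity of $c_0\mathbf I+B$. This gives $p_m$ bounded in $L^2(0,T;V)\cap L^\infty(0,T;$ ``$(c_0\mathbf I+B)^{1/2}$-norm''$)$, hence (since $c_0>0$) bounded in $L^\infty(0,T;L^2(\Omega))$. For the time derivative, from \eqref{galerkin} and the upper bound $k(z(t))\le k_2$ one estimates $\|[(c_0\mathbf I+B)p_m]_t\|_{V'}\lesssim k_2\|p_m\|_V+\|S\|_{V'}$ when tested against $q\in V_m$; a projection argument (using that $P_m$ is bounded on $V$ for this basis) upgrades this to a bound for $[(c_0\mathbf I+B)p_m]_t$ in $L^2(0,T;V')$. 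Finally, since $Bp_m=\nabla\cdot\bu_m$ with $\bu_m=\cE^{-1}(-\nabla p_m)$, Lemma \ref{elipreg} gives $Bp_m$ bounded in $L^2(0,T;H^1(\Omega))$, so $(c_0\mathbf I+B)p_m$ is bounded in $L^2(0,T;H^1(\Omega))$.

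\textbf{Step 3: Passage to the limit.} Extract a subsequence with $p_m\rightharpoonup p$ in $L^2(0,T;V)$, $(c_0\mathbf I+B)p_m\rightharpoonup (c_0\mathbf I+B)p$ in $L^2(0,T;H^1(\Omega))$ (using continuity of $B$), and $[(c_0\mathbf I+B)p_m]_t\rightharpoonup [(c_0\mathbf I+B)p]_t$ in $L^2(0,T;V')$. The term $A[p_m,q;z(t)]=(k(z(t))\nabla p_m,\nabla q)_{\Omega}$ passes to the limit because $k(z(\cdot))\nabla q\in L^2(0,T;L^2(\Omega))$ is a \emph{fixed} test-type function (recall $z$ is given and fixed in this linear lemma, so $k(z)$ is a fixed $L^\infty$ coefficient — no nonlinearity to contend with here) and $\nabla p_m\rightharpoonup\nabla p$ weakly in $L^2(0,T;L^2(\Omega))$. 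Testing \eqref{galerkin} against $q(t)=\psi(t)e_j$ with $\psi\in C_0^\infty(0,T)$, letting $m\to\infty$, and then using density of $\bigcup_m V_m$ in $V$ yields \eqref{varform1'} for all $q\in V$ and a.e.\ $t$, equivalently the integrated form \eqref{varform1}. For the initial condition: $(c_0\mathbf I+B)p_m\in C([0,T];V')$ with $[(c_0\mathbf I+B)p_m]_t$ bounded in $L^2(0,T;V')$, so by Aubin--Lions-type compactness (or simply weak continuity in $V'$) $(c_0\mathbf I+B)p_m(0)\rightharpoonup(c_0\mathbf I+B)p(0)$ in $V'$; since $P_md_0\to d_0$ in $L^2(\Omega)\hookrightarrow V'$, we get $[(c_0\mathbf I+B)p](0)=d_0$ in $V'$, which makes sense as $(c_0\mathbf I+B)p\in C([0,T];V')$. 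This gives a weak solution in the sense of Definition \ref{WeakSol}.

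\textbf{Main obstacle.} The chief technical subtlety is not the nonlinearity — in this \emph{linear} lemma $z$ and hence $k(z)$ are frozen — but rather handling the implicit, only-partially-coercive operator $c_0\mathbf I+B$: because $B$ is merely nonnegative and $Bp$ need not lie in $V$ (only in $H^1(\Omega)$, not $H^1_0$), one must be careful never to test with $Bp$ directly, to extract the $L^2(0,T;V')$-bound on $[(c_0\mathbf I+B)p_m]_t$ via a basis for which the $L^2$-projection is $V$-stable, and to interpret the initial condition only in $V'$. The coercivity of $c_0\mathbf I+B$ from Corollary \ref{Binvert*} (which crucially requires $c_0>0$) is exactly what rescues the a priori estimate \eqref{apriori}; this is where the hypothesis $c_0>0$ is indispensable, and why the $c_0\searrow 0$ limit must be treated separately in the nonlinear theorem.
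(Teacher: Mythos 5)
Your proposal is correct and follows essentially the same route as the paper's Appendix B: a spatial Galerkin scheme on a basis orthogonal in $V$ and orthonormal in $L^2(\Omega)$, the energy estimate obtained by testing with $p_m$ and using self-adjointness of $B$ together with coercivity of $c_0\mathbf I+B$ and the lower bound $k_1$, a $V'$-bound on $[(c_0\mathbf I+B)p_m]'$, and passage to the limit by weak compactness with the initial condition recovered in $V'$. Your minor variations (imposing the discrete initial data via $P_md_0$ rather than via $[c_0\mathbf I+B]^{-1}d_0$, and recovering the initial condition by weak continuity rather than the $\varphi(0)=1$ integration-by-parts trick) are inessential, and your explicit flagging of the projection argument needed for the $V'$-bound on the implicit time derivative is if anything more careful than the paper's treatment.
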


\begin{proposition}\label{estsfollow} The weak solution constructed in Lemma \ref{F_welldefined} satisfies for a.e. $t \in [0,T]$ the estimates
\begin{align}\label{preest1}
c_0||p(t)||_{L^2(\Omega)}^2+||B^{1/2}p(t)||^2_{L^2(\Omega)}+\|p\|_{L^2(0,T;V)} \lesssim &~ ||d_0||^2_{L^2(\Omega)}+ \|S\|^2_{L^2(0,T;V')}  \\[.2cm] \label{preest2}
\|[(c_0 \mathbf I + B)p]'\|_{L^2(0,T;V')} \lesssim &~  \|p\|_{L^2(0,T;V)} + \|S\|_{L^2(0,T;V')} \\
\|(c_0\mathbf I  + B)p\|_{L^2(0,T;H^1(\Om))} \lesssim &~ \|p\|_{L^2(0,T;V)}. \label{preest3}
\end{align}
\end{proposition}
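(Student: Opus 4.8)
\textbf{Proof plan for Proposition \ref{estsfollow}.}

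The plan is to derive the three estimates by testing the variational formulation \eqref{varform1'} with the solution $p$ itself (formally), then justifying the procedure via the Galerkin approximants from the proof of Lemma \ref{F_welldefined} (Appendix B), where the approximate solutions \emph{are} admissible test functions and the energy identities hold exactly. So first I would work at the Galerkin level: for the finite-dimensional approximant $p_m$, set $q = p_m$ in the approximate equation. The key structural point is that
$$\big\langle [(c_0\mathbf I + B)p_m]', p_m\big\rangle = \frac{1}{2}\frac{d}{dt}\Big(c_0\|p_m(t)\|^2 + \|B^{1/2}p_m(t)\|^2\Big),$$
which uses the self-adjointness and non-negativity of $B$ (Lemma \ref{posop}) to write $(Bp_m,p_m) = \|B^{1/2}p_m\|^2$ and the symmetry of $c_0\mathbf I + B$ to pull out the total time derivative. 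The diffusion term gives $A[p_m,p_m;z(t)] = (k(z(t))\nabla p_m,\nabla p_m) \ge k_1\|\nabla p_m\|^2 = k_1\|p_m\|_V^2$ by the lower bound in Assumption \ref{Assumpk}. The right-hand side $\langle S(t),p_m(t)\rangle$ is bounded by $\|S(t)\|_{V'}\|p_m(t)\|_V \le \frac{k_1}{2}\|p_m\|_V^2 + \frac{1}{2k_1}\|S(t)\|_{V'}^2$ via Cauchy--Schwarz and Young. Integrating from $0$ to $t$ and absorbing, I get
$$c_0\|p_m(t)\|^2 + \|B^{1/2}p_m(t)\|^2 + k_1\int_0^t\|p_m\|_V^2\,ds \lesssim c_0\|p_m(0)\|^2 + \|B^{1/2}p_m(0)\|^2 + \int_0^T\|S\|_{V'}^2\,ds.$$
Since $(c_0\mathbf I+B)p_m(0)$ is the projection of $d_0$, the initial terms are controlled by $\|d_0\|^2$ (using $c_0\|p_m(0)\|^2 + \|B^{1/2}p_m(0)\|^2 = ([c_0\mathbf I+B]p_m(0),p_m(0)) \le \|d_0\|\,\|p_m(0)\| \le \frac12\|d_0\|^2 + \frac12\|p_m(0)\|^2$ and absorbing via the coercivity $([c_0\mathbf I+B]p_m(0),p_m(0)) \ge c_0\|p_m(0)\|^2$, plus a Poincar\'e-type bound relating $\|p_m(0)\|$ to the quadratic form — here the case $c_0>0$ is what makes this clean, as $c_0\mathbf I + B$ is coercive on $L^2$ by Corollary \ref{Binvert*}). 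Taking $m\to\infty$ with lower semicontinuity of norms under weak convergence yields \eqref{preest1}.

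For \eqref{preest3}: since $(c_0\mathbf I + B)p = \zeta = c_0 p + \nabla\cdot\bu$ with $\cE(\bu) = -\nabla p$, the elliptic regularity in Lemma \ref{elipreg}/equation \eqref{ptod} gives $\nabla\cdot\bu = Bp \in H^1(\Omega)$ with $\|Bp\|_{H^1(\Omega)} \lesssim \|p\|_V$, and trivially $\|c_0 p\|_{H^1(\Omega)} \lesssim \|p\|_V$; integrating the square over $[0,T]$ gives \eqref{preest3}. For \eqref{preest2}: from \eqref{varform1'}, for any $q \in V$,
$$\big|\big\langle[(c_0\mathbf I+B)p]'(t),q\big\rangle\big| = \big|\langle S(t),q\rangle - A[p(t),q;z(t)]\big| \le \|S(t)\|_{V'}\|q\|_V + k_2\|\nabla p(t)\|\,\|\nabla q\|,$$
using the upper bound $k(z)\le k_2$; taking the supremum over $\|q\|_V \le 1$ gives $\|[(c_0\mathbf I+B)p]'(t)\|_{V'} \lesssim \|S(t)\|_{V'} + \|p(t)\|_V$, and squaring and integrating over $[0,T]$ yields \eqref{preest2}.

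\textbf{Main obstacle.} The delicate point is not the formal energy computation but its rigorous justification: weak solutions of \eqref{weakpforgiventildep} are \emph{not themselves admissible test functions} (we only know $[(c_0\mathbf I+B)p]' \in L^2(0,T;V')$ while $p \in L^2(0,T;V)$, and crucially we do \emph{not} know $[c_0\mathbf I+B]p \in L^2(0,T;V)$, so the chain-rule lemma of \cite{evans} for the pairing $\langle w', w\rangle$ does not directly apply to recover $\frac{d}{dt}(\cdot,\cdot)$). The clean resolution is to perform the entire argument at the Galerkin level in Appendix B, where the approximants are genuine test functions and the identity $\langle [(c_0\mathbf I+B)p_m]', p_m\rangle = \frac12\frac{d}{dt}([c_0\mathbf I+B]p_m,p_m)$ holds honestly, and then pass to the limit using weak/weak-$*$ lower semicontinuity for the left-hand quadratic quantities and the already-established convergence $p_m \rightharpoonup p$ in $L^2(0,T;V)$. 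The bound \eqref{preest2} is then obtained directly from the limit equation \eqref{varform1'} (which requires no test-function manipulation at all), and \eqref{preest3} is a pure elliptic-regularity consequence of \eqref{ptod}. One should also take care that all generic constants in $\lesssim$ depend only on $k_1, k_2, \Omega$, and the Poincar\'e/Korn constants — in particular not on $c_0$ as $c_0 \searrow 0$ — since \eqref{preest1}--\eqref{preest3} must survive the singular limit used later for the incompressible case; this is automatic here because the $c_0$-dependent term appears with a favorable sign on the left.
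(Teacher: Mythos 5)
Your proposal follows essentially the same route as the paper's own argument in Appendix~B: test the Galerkin equation with $p_n$, use self-adjointness and non-negativity of $B$ to write $\langle[(c_0\mathbf I+B)p_n]',p_n\rangle=\tfrac12\tfrac{d}{dt}([c_0\mathbf I+B]p_n,p_n)$, invoke the coercivity $k_1$ and Young's inequality, pass to the limit by weak lower semicontinuity, and obtain \eqref{preest2} directly from the equation via the $V'$-norm characterization and \eqref{preest3} from the continuity of $B:V\to H^1(\Omega)$. The only wrinkle is your treatment of the initial-data term, where the proposed absorption via $([c_0\mathbf I+B]p_m(0),p_m(0))\ge c_0\|p_m(0)\|^2$ yields a $c_0$-dependent constant rather than closing cleanly; the paper simply retains $(d_0,[c_0\mathbf I+B]^{-1}d_0)_{\Omega}$ on the right-hand side, so this does not change the substance of the argument.
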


The proof of Theorem \ref{linearwellp*} follows immediately, given the solution to \eqref{weakpforgiventildep}.
\begin{proof}[Proof of Theorem \ref{linearwellp*}] Given the lemma above, for a given $z \in L^2(0,T;L^2(\Omega))$, we have obtained the functions
$$p(z) \in L^2(0,T;V),~~\zeta(z) = c_0p(z)+Bp(z) \in L^2(0,T;H^1(\Omega)),$$
where we have denoted the dependence of the solution on the given function $z$. Since $\nabla p(z) \in L^2(\Omega)$ a.e. $t$, we can invoke the elasticity isomorphism as in Section \ref{elasticop} to obtain
$$\bu(z) = \cE^{-1}(-\nabla [p(z))]) \in \mathbf H^2(\Omega) \cap \bV,~~\text{a.e.}~t.$$
By the injectivity of $B$ on $V=H_0^1(\Omega)$, we can  identify $B[p(z)]=\nabla \cdot [\bu(z)]$ (as in \eqref{abstractform}). Hence, we obtain a solution to \eqref{abstractform}.

Finally, with the hypotheses on $\bF$ and $d_0$, we can translate back to the original case as in \eqref{abstractform*} to immediately obtain a weak solution for 
\begin{equation}
\begin{cases}
\cE(\bu)=-\nabla p+\bF  &~ \in L^2(0,T; \mathbf V')\\
\zeta_t-\nabla \cdot [k(z)\nabla p]=S&~ \in L^2(0,T;V')\\
\zeta=c_0p+\nabla \cdot \bu &~ \in L^2(0,T;L^2(\Omega))\\
c_0 p(0) + \nabla\cdot\bu(0)=d_0&~ \in L^2(\Omega).
\end{cases}
\end{equation}
\end{proof}

\section{Nonlinear Problem - Existence of Solutions}
This section contains the proof of Theorem \ref{th:main}. We divide the proof into two parts. First, we focus on the case of compressible constituents, i.e., $c_0 > 0$. Our strategy in this scenario is to show that the map $z \mapsto \zeta$ provided by Theorem \ref{linearwellp*} has a fixed point. In part two of the proof, we obtain existence of solutions for the case of incompressible mixture constituents using a limiting process $c_0 \searrow 0$. 

\subsection{Proof of Theorem \ref{th:main} for $c_0 > 0$}

We begin by considering the general translated problem with $\mathbf F \equiv 0$. By Theorem \ref{linearwellp*}, given $z \in L^2(0,T;L^2(\Omega))$, the  problem \eqref{abstractform} (with associated regularity of data) has a {\em weak solution}, written as $(\bu(z),\zeta(z),p(z))$, satisfying the  estimates in \eqref{preest1}--\eqref{preest3}.
We note that since the solution to the linear problem provided by Theorem \ref{linearwellp*} is not necessarily shown to be unique, we must allow the possibility that the solution mapping is multi-valued (in the sense of the Appendix A). Thus we consider the reduced problem
\begin{equation}\label{refthisone}
\begin{cases}
\zeta_t- \nabla \cdot [k(z)\nabla p]=S&~ \in L^2(0,T;V')\\
\zeta = c_0p+Bp &~\in L^2(0,T; L^2(\Omega))\\
\zeta(0)=d_0&~ \in L^2(\Omega),
\end{cases}
\end{equation}
and define 
a correspondence between the given {\em permeability argument} $z \in L^2(0,T;L^2(\Omega))$ and the resulting fluid contents $\zeta$ taken from weak solutions (in the sense of Definition \ref{WeakSol}) corresponding satisfying the a priori estimates in \eqref{preest1}--\eqref{preest3}.  

\begin{definition}\label{map} For fixed ``data"~ $d_0$ and  $S$ as above, and given $z \in L^2(0,T;L^2(\Omega)$, we define the correspondence 
$$\mathscr F: L^2(0,T;L^2(\Omega)) \twoheadrightarrow L^2(0,T;L^2(\Omega)),$$  by
$$\mathscr F(z) = \big\{\zeta = [c_0\mathbf I+B]p ~:~(p, \zeta) \ \text{is a weak solution of \eqref{refthisone} that satisfies \eqref{preest1}--\eqref{preest3}} \big\}.$$
Clearly, using Lemma \ref{F_welldefined} and Corollary \ref{estsfollow}, we have that the set $\mathscr F(z)  \neq \emptyset$. Moreover, the fact that the range of the correspondence $\mathscr R(\mathscr F)\subseteq L^2(0,T;L^2(\Omega))$ is immediate, since all the elements in the set $p(z)$ belong to $L^2(0,T; L^2(\Omega))$ trivially, and hence by the boundedness of $B \in \mathscr L(L^2(\Omega))$ we have that 
$$[c_0\mathbf I+B]p \in L^2(0,T;L^2(\Omega)), \ \forall p \in p(z).$$
\end{definition}
Note here that by the definition of the correspondence, the satisfaction of the initial condition is included in the definition of $\mathscr F$. Also, note that  passing between a $\zeta(z)$ and a $p(z)$ simply uses the invertibility of $[c_0\mathbf I+B]$ on $L^2(0,T;L^2(\Omega))$, via Corollary \ref{Binvert*}.\\

We will use the Bohnenblust-Karlin Fixed Point Theorem for correspondences \cite{guide} to obtain the existence of (at least) one fixed point for $\mathscr F$. The statement of the theorem, along with the relevant background definitions can be found in the Appendix A. 
We have the following theorem:
\begin{theorem}\label{th:supporting}
The correspondence $\mathscr F: L^2(0,T;L^2(\Omega)) \twoheadrightarrow L^2(0,T;L^2(\Omega))$ defined above has a fixed point. The set of fixed points of $\mathscr F$ is compact in $L^2(0,T;L^2(\Omega))$.
\end{theorem}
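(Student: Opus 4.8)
The plan is to verify the hypotheses of the Bohnenblust--Karlin fixed point theorem (stated in Appendix A) for the correspondence $\mathscr F$ acting on a suitable closed, bounded, convex subset of $L^2(0,T;L^2(\Omega))$. First I would identify the right ambient set: by the a priori estimate \eqref{preest1}, every $\zeta \in \mathscr F(z)$ satisfies $\|\zeta\|_{L^2(0,T;L^2(\Omega))} \lesssim \|d_0\|^2 + \|S\|^2_{L^2(0,T;V')}$, a bound independent of $z$; moreover \eqref{preest3} gives $\zeta = (c_0\mathbf I+B)p \in L^2(0,T;H^1(\Omega))$ with norm controlled by $\|p\|_{L^2(0,T;V)}$, which by \eqref{preest1} is again controlled by the data. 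Thus the range $\mathscr R(\mathscr F)$ lies in a fixed closed ball $K$ of the Hilbert space $L^2(0,T;L^2(\Omega))$ which is moreover a bounded subset of $L^2(0,T;H^1(\Omega)) \cap H^1(0,T;V')$ (the latter via \eqref{preest2}). One restricts $\mathscr F$ to (the closed convex hull of) $K$; since $\mathscr F(K) \subseteq K$, the setup is consistent.

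The key steps, in order, are: (1) \emph{Nonemptiness and closed bounded convex values.} Nonemptiness is Lemma \ref{F_welldefined} together with Proposition \ref{estsfollow}, as already noted in Definition \ref{map}. For convexity: if $\zeta_1,\zeta_2 \in \mathscr F(z)$ with associated pressures $p_1,p_2$, then $p_i = (c_0\mathbf I+B)^{-1}\zeta_i$ by Corollary \ref{Binvert*}, the variational equation \eqref{varform1} is \emph{linear} in $(p,\zeta)$ for fixed $z$ (the permeability $k(z)$ is frozen), the initial condition is affine, and the a priori estimates \eqref{preest1}--\eqref{preest3} are norm bounds that are preserved under convex combinations — so $\theta\zeta_1 + (1-\theta)\zeta_2 \in \mathscr F(z)$. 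Closedness of $\mathscr F(z)$ follows by a routine limiting argument: a sequence $\zeta^n \to \zeta$ in $L^2(0,T;L^2(\Omega))$ with $\zeta^n \in \mathscr F(z)$ has $p^n = (c_0\mathbf I+B)^{-1}\zeta^n \to p$ in $L^2(0,T;L^2(\Omega))$, but the bound \eqref{preest1} gives $p^n$ bounded in $L^2(0,T;V)$ so (a subsequence of) $p^n \rightharpoonup p$ in $L^2(0,T;V)$; passing to the limit in the linear variational form \eqref{varform1} (the term $A[p^n,q;z]$ is linear and weakly continuous in $p^n$ since $k(z)$ is a fixed $L^\infty$ coefficient) and in the initial condition (using the bound \eqref{preest2} and Aubin--Lions to pass in $[(c_0\mathbf I+B)p^n]'$) shows $\zeta \in \mathscr F(z)$. (2) \emph{Upper hemicontinuity / closed graph.} Since the range sits in the fixed compact-in-weak set $K$, it suffices to show $\mathscr F$ has closed graph: take $z^n \to z$ in $L^2(0,T;L^2(\Omega))$ and $\zeta^n \in \mathscr F(z^n)$ with $\zeta^n \to \zeta$; one must show $\zeta \in \mathscr F(z)$. (3) \emph{Compactness.} The set of fixed points is closed (it is a closed subset of $K$ by the closed-graph property applied with $z^n = \zeta^n$) and contained in the bounded set $K \subset L^2(0,T;H^1(\Omega)) \cap H^1(0,T;V')$; by the Aubin--Lions--Simon compactness theorem this embeds compactly into $L^2(0,T;L^2(\Omega))$, so a closed bounded subset there is compact.

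The main obstacle is step (2), the closed graph property, because this is where the Nemytskii operator $z \mapsto k(z)$ must be handled. The argument: from $\zeta^n \in \mathscr F(z^n)$ and the uniform estimates \eqref{preest1}--\eqref{preest3}, the pressures $p^n = (c_0\mathbf I+B)^{-1}\zeta^n$ are bounded in $L^2(0,T;V)$ and $[(c_0\mathbf I+B)p^n]'$ is bounded in $L^2(0,T;V')$; extract $p^n \rightharpoonup p$ in $L^2(0,T;V)$, hence $\nabla p^n \rightharpoonup \nabla p$ in $L^2(\Omega\times(0,T))$, and $\zeta^n = (c_0\mathbf I+B)p^n \to \zeta = (c_0\mathbf I+B)p$ (the convergence is already assumed; consistency of the limit uses continuity of $(c_0\mathbf I+B)$). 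Meanwhile $z^n \to z$ in $L^2(\Omega\times(0,T))$, so a subsequence converges a.e., and by Assumption \ref{Assumpk} ($k$ continuous and bounded) together with dominated convergence, $k(z^n) \to k(z)$ strongly in $L^r(\Omega\times(0,T))$ for every finite $r$, in particular with $k(z^n)$ uniformly bounded in $L^\infty$. One then passes to the limit in $\int_0^T A[p^n,q;z^n]\,dt = \int_0^T (k(z^n)\nabla p^n, \nabla q)\,dt$ by writing it as $\int k(z^n)(\nabla p^n - \nabla p)\cdot\nabla q + \int (k(z^n)-k(z))\nabla p \cdot \nabla q + \int k(z)\nabla p\cdot\nabla q$: the first term vanishes by $k(z^n)\nabla q$ converging strongly in $L^2$ against the weak convergence $\nabla p^n \rightharpoonup \nabla p$ (more carefully, $k(z^n)\nabla q \to k(z)\nabla q$ in $L^2$, and $\nabla p^n \rightharpoonup \nabla p$, so the pairing passes — here one uses test functions $q$ of product form, dense in $L^2(0,T;V)$), and the second vanishes by $k(z^n)\to k(z)$ in $L^2$ paired against the fixed $L^2$ function $\nabla p \cdot \nabla q$. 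The linear term $\langle [(c_0\mathbf I+B)p^n]', q\rangle$ passes by weak convergence of derivatives, and the initial condition passes using $(c_0\mathbf I+B)p^n \to (c_0\mathbf I+B)p$ in $C([0,T];V')$ (from the Aubin--Lions-type embedding). This establishes $\zeta \in \mathscr F(z)$, completing the closed graph verification and hence, by Bohnenblust--Karlin, the existence of a fixed point; compactness of the fixed-point set then follows as in step (3).
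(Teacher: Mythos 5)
Your proposal is correct and follows essentially the same route as the paper: Bohnenblust--Karlin applied to $\mathscr F$, with nonempty convex and closed values coming from linearity of the frozen-coefficient problem, the closed-graph/UHC property coming from the Nemytskii continuity of $k$ combined with a splitting of the bilinear form and Aubin--Lions compactness, and relative compactness of the range from the uniform $L^2(0,T;H^1(\Omega))\cap H^1(0,T;V')$ bounds. The only cosmetic differences are that you verify the closed graph directly on a closed ball (the paper establishes the sequential UHC criterion of Theorem \ref{sequentialcrit} and then invokes Theorem \ref{theoneweuse}), and your three-term splitting of the permeability term, handled by dominated convergence, avoids the paper's restriction to test functions in $L^{\infty}(0,T;W^{1,\infty}(\Omega))$ followed by density.
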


\begin{proof}[Proof of Theorem \ref{th:supporting}] Let $d_0 \in L^2(\Omega)$ and $S \in L^2(0,T; V')$ be given. 
We consider the correspondence  $\mathscr F: L^2(0,T;L^2(\Omega)) \twoheadrightarrow L^2(0,T;L^2(\Omega))$ defined above in Definition \ref{map}. 
By the construction of {\em linear solutions} as given in Lemma \ref{F_welldefined} and the corresponding estimates in Corollary \ref{estsfollow}, we have that for each $z \in L^2(0,T;L^2(\Omega))$, the set $\mathscr F(z) \neq \emptyset$. Moreover, for each element $\zeta \in \mathscr F(z)$, via \ref{F_welldefined}, we have that 
$$\zeta \in L^2(0,T;V), \ \ \text{and} \ \ \zeta_t \in L^2(0,T;V'),$$ with associated estimates in \eqref{preest1}--\eqref{preest3}.
\vskip.2cm
\noindent \underline{Step I.} First, we show that the correspondence is convex- and closed-valued,  i.e., $\mathscr F(z)$ is convex and closed, for each $z \in  L^2(0,T;L^2(\Omega))$. Thus let $z \in  L^2(0,T;L^2(\Omega))$ and let $\zeta_1, \zeta_2 \in \mathscr F(z)$. 
This means that  the pairs $\big(p_i, \zeta_i = [c_0\mathbf I+B]p_i\big)$ satisfy the definition provided in Definition \ref{WeakSol} for $i = 1,2$, namely for every $q \in L^2(0,T; V)$, we have
\begin{equation}\label{varform11}
\int_0^T \big\langle [c_0\mathbf I  + B]p_i'(t), q(t) \big \rangle \ dt + \int_0^T A[p_i(t), q(t);z(t)]\ dt = \int_0^T \langle S(t), q(t) \rangle \ dt
\end{equation}
and $\big[(c_0\mathbf I  + B)p_i\big](0) = d_0$.

Since the problem is {\em linear} in $k(z)$ with $A[p,q;z]=\big(k(z)\nabla p, \nabla q)_{\Omega}$, convexity in the weak form of solutions and initial conditions is immediate by taking the appropriate linear combination of the above equalities. For the associated inequalities in \eqref{preest1}--\eqref{preest3}, the convexity of norms is sufficient. Indeed, we show this for \eqref{preest1}: Suppose for $p_i(z)$, $i=1,2$ we have:
\begin{align}\label{refthis}c_0||p_i(z)||_{L^{\infty}(0,T;L^2(\Omega))}^2+||B^{1/2}p_i(z)||^2_{L^{\infty}(0,T;L^2(\Omega))}&+\|p_i(z)\|_{L^2(0,T;V)} \nonumber  \\ &\le C[||d_0||^2_{L^2(\Omega)}+ ||S||^2_{L^2(0,T;V')}].\end{align}
Then $$||\alpha p_1+(1-\alpha)p_2||^2 \le \alpha||p_1||^2+(1-\alpha)||p_2||^2,$$ and hence by multiplying the inequality \eqref{refthis} by $\alpha$ when $i=1$, and again multiplying by $(1-\alpha)$ when $i=2$, then adding the results, yields that the function $\alpha p_1+(1-\alpha)p_2$ satisfies \eqref{placeholder1} for any $\alpha \in [0,1]$ {\em with the same constant $C$ associated to the RHS}.

 To show that $\mathscr F(z)$ is closed, consider a sequence $\zeta_n \in \mathscr F(z)$ such that $\zeta_n \to \zeta \in L^2(0,T;L^2(\Omega))$. Using the invertibility of the linear operator $[c_0\mathbf I+B]$ on $L^2(0,T;L^2(\Omega))$, we have that $p_n \to (c_0\mathbf I+B)^{-1}\zeta \in L^2(0,T;L^2(\Omega))$, so we let ~$p =  (c_0\mathbf I+B)^{-1}\zeta \in L^2(0,T;L^2(\Omega))$. Since the pair $(p_n,\zeta_n)$ satisfies the estimates \eqref{preest1}--\eqref{preest3}, we know that $p_n$ is uniformly-in-$n$ bounded in $L^2(0,T;V)$. Therefore $p_n$ has a weakly convergent subsequence, whose limit is identified with $p$ by uniqueness of limits. This yields that $p$ lies in $L^2(0,T;V)$.
 
Passing to the limit, then, in the weak form in Definition \ref{WeakSol} is immediate, since the problem is linear in $k(z)$. 
The estimates in \eqref{preest1}--\eqref{preest3} on $p(z)$ and $\zeta(z)=[c_0\mathbf I+B]p(z)$ follow from weak lower semicontinuity of norms, since each $(p_n,\zeta_n)$ satisfies them by hypothesis. Lastly, obtaining the initial condition is immediate, since each $\zeta_n$ corresponds to a solution with the same initial condition $\zeta_n(t=0)=d_0$. The estimates on solutions in \eqref{preest1}--\eqref{preest3} ensure that $\zeta_n, \zeta \in H^1(0,T;V')$, and hence we have that $\zeta(t=0)=d_0$ as the limit point of $\zeta_n(0)$ (in the sense of $V'$).

 \vskip.2cm
 \noindent \underline{Step II.} Next, we show the sequential criterion for UHC of the correspondence $\mathscr F$, as in Theorem \ref{sequentialcrit}.  To that end, let $\{(z_n, \zeta_n)\} \subseteq \mathscr G(\mathscr F)$. Suppose further that $z_n \to z \in L^2(0,T;L^2(\Omega))$.  We want to conclude that $\zeta_n$ has a (strong) limit point $\zeta \in \mathscr F(z)$. 
 
 First, by Assumption \ref{Assumpk}, the function $k(\cdot)$ considered as Nemytskii operator, has the property that $k(z_n) \to k(z) \in L^2(0,T;L^2(\Omega))$. Now, since $\zeta_n \in \mathscr F(z_n)$, for the unique $p_n = [c_0\mathbf I+B]^{-1}\zeta_n$~
 we have by definition of $\mathscr F$ the estimate \eqref{preest1} and that the weak form of the equation is satisfied (as in Definition \ref{WeakSol}). The estimate \eqref{preest1}  (with fixed RHS in terms of data) yields a uniform-in-$n$ bound on 
 $$||p_n||_{L^2(0,T;V)},~~||p_n||_{L^{\infty}(0,T;L^2(\Omega))},~~||B^{1/2}p_n||_{L^{\infty}(0,T;L^2(\Omega))}.$$
 From the bound on $p_n$ in $L^2(0,T;V)$ we extract a weak subsequential limit point, i.e., $p_{n_k} \rightharpoonup p \in L^2(0,T;V).$ From this and the continuity of $[c_0\mathbf I+B] \in \mathscr L(L^2(0,T;L^2(\Omega))),$ we obtain immediately that ~$\zeta_{n_k} =[c_0\mathbf I+B]p_{n_k} \rightharpoonup [c_0\mathbf I+B]p.$ We define this latter quantity as $\zeta \equiv [c_0\mathbf I+B]p,$ and hence $\zeta_{n_k} \rightharpoonup \zeta.$ In addition, the estimate \eqref{preest3} in the definition of $\mathscr F$ and the uniqueness of limits ensure that (perhaps passing to a further subsequence with the same label)~
 $\zeta_{n_k} \rightharpoonup \zeta \in H^1(0,T; V').$
  We want to show that $\zeta \in \mathscr F(z)$, and this is accomplished by passing with the limit on the subsequence $n_k$ in the weak formulation \eqref{varform1}. To that end, let us again consider the weak form evaluated on $n_k$, and restrict our spatial test functions to $q \in L^2(0,T;V) \cap L^{\infty}(0,T;W^{1,\infty}(\Omega))$:
\begin{equation}\label{thisoneagain}
\int_0^T \big\langle \zeta_{n_k}'(t), q(t) \big \rangle \ dt + \int_0^T A[p_{n_k}(t), q(t);z_{n_k}(t)]\ dt = \int_0^T \langle S(t), q(t) \rangle \ dt
\end{equation}
  Limit passage on the first term on the LHS is immediate identifying weak limits in this weak form. For the second term, more care must be taken. Consider:
  \begin{equation}\label{duh}\int_0^T\big(k(z_{n_k})\nabla p_{n_k},\nabla q(t)\big)dt = \int_0^T\big([k(z_{n_k})-k(z)]\nabla p_{n_k},\nabla q(t)\big)dt +\int_0^T(k(z)\nabla p_{n_k},\nabla q(t)) dt.\end{equation}
  The first term on the RHS is handled through the Nemytskii property of $k(\cdot)$:
  \begin{align*} \int_0^T([k(z_{n_k})-k(z)]\nabla p_{n_k},q(t)) dt \le &~ C(||q||_{L^{\infty}(0,T;W^{1,\infty}(\Omega))})||k(z_{n_k})-k(z)||_{L^2(0,T;L^2(\Omega))}|| p_{n_k}||_{L^2(0,T;V)}\\
  \le & ~C(q,||p||_{L^2(0,T;V)})||k(z_{n_k})-k(z)||_{L^2(0,T;L^2(\Omega))} \to 0,
  \end{align*}
  by the uniform bound on $p_{n_k}$ in $L^2(0,T;V)$. 
  Convergence of the second term in \eqref{duh} is immediate, since by the boundedness of $k$ we have $k(z)\nabla q \in L^2(0,T; L^2(\Omega))$; thence, $\nabla p_{n_k} \rightharpoonup \nabla p \in L^2(0,T;L^2(\Omega))$. 
  
  Thus, we have shown that for $q \in L^2(0,T;V) \cap L^{\infty}(0,T;W^{1,\infty}(\Omega))$ 
 $$ \int_0^T(k(z_{n_k})\nabla p_{n_k},\nabla q(t)) dt \to \int_0^T(k(z)\nabla p,\nabla q(t)) dt,$$
 and hence, passing to the limit as $k\to \infty$ in \eqref{thisoneagain} we obtain for $\zeta=\zeta(z)$ the identity
   \begin{equation}\int_0^T\langle\zeta_t,q\rangle dt +\int_0^T(k(z)\nabla p,\nabla q(t))dt = \int_0^T\langle S,q(t)\rangle dt \end{equation}
   for all $q \in L^2(0,T;V) \cap L^{\infty}(0,T;W^{1,\infty}(\Omega))$, the latter being dense in $L^2(0,T;V)$. Thus we have shown that $(\zeta(z),p(z))$ satisfies the weak form of the pressure equation and hence we have constructed a weak solution $(\zeta(z),p(z))$ for $z \in L^2(0,T;L^2(\Omega))$. The requisite estimates in the definition of $\mathscr F$ (\eqref{placeholder1}--\eqref{placeholder3}) hold immediately on the weak subsequential limit points $(\zeta(z),p(z))$ (in the relevant topologies) by weak lower semicontinuity of norms. Obtaining the initial condition is also immediate from the definition of $\mathscr F$. Hence $\zeta_n$ has a {\em weak subsequential limit point $\zeta \in \mathscr F(z)$}. 
   
   To conclude the UHC property of the correspondence $\mathscr F$, we must improve the convergence of $\zeta_{n_k} \to \zeta$ to be strongly in $L^2(\Omega)$. This is done via the Lions-Aubin compactness theorem (see, for instance, \cite{showmono}). In addition to the estimate \eqref{preest1} for the sequence $p_{n_k}$, we obtain two additional uniform-in-$k$ estimates from continuity of $B: V \to H^1(\Omega)$ and from satisfying the weak form of the pressure equation, namely:
   \begin{align}
   ||\zeta_{n_k}||_{L^2(0,T;H^1(\Omega))}^2 =&~ ||c_0p_{n_k}+Bp_{n_k}||_{L^2(0,T;H^1(\Omega))}^2 \lesssim ||p||_{L^2(0,T;V)}^2\\
   \|[\zeta_{n_k}]'\|_{L^2(0,T;V')} =&~  \|[(c_0\mathbf I  + B)p_{n_k}]'\|_{L^2(0,T;V')}^2 \lesssim   \|p\|_{L^2(0,T;V)}^2 + \|S\|_{L^2(0,T;V')}^2
\end{align}
By possibly passing to a further subsequence $n_{k_m}$ (not affecting the previous steps in establishing the weak solution or associated estimates), we improve the convergence of $\zeta_{n_{k_m}} \to \zeta \in L^2(0,T;L^2(\Omega))$. Applying Theorem \ref{sequentialcrit}, we obtain that $\mathscr F: L^2(0,T;L^2(\Omega)) \twoheadrightarrow L^2(0,T;L^2(\Omega))$ is UHC as well as compact-valued. Subsequently, from Theorem \ref{theoneweuse}, we have that $\mathscr F$ is a closed correspondence. 
   
   \vskip.2cm
\noindent \underline{Step III.} Lastly, to invoke the Bohnenblust-Karlin Fixed Point Theorem, we must show that the range of $\mathscr F$ is relatively compact in $L^2(0,T;L^2(\Omega))$. But, as in the previous step, this will follow from the Lions-Aubin compactness criterion. Indeed,  for any $\zeta \in \mathscr R(\mathscr F)$, $\zeta$ corresponds to a weak solution satisfying the estimate \eqref{preest1}, and subsequently \eqref{preest2}--\eqref{preest3}. In particular, we obtain for any such $\zeta$ there is an associated $p=[c_0\mathbf I+B]^{-1}\zeta$ such that:
\begin{align}
||\zeta||^2_{L^2(0,T;H^1(\Omega))} \le C||p||^2_{L^2(0,T;V)} \le C\big[||d_0||^2_{L^2(\Omega)}+||S||_{L^2(0,T;V')}^2] \\
\|\zeta'\|_{L^2(0,T;V')} \le C\big[  \|p\|_{L^2(0,T;V)} + \|S\|_{L^2(0,T;V')}\big] \le C\big[||d_0||^2_{L^2(\Omega)}+||S||_{L^2(0,T;V')}^2\big]
\end{align}
A subset of $L^2(0,T;L^2(\Omega))$ which is bounded as in the previous two estimates is relatively compact by the Lions-Aubin criterion, and hence $\zeta \in \mathscr R(\mathscr F)$ lies in a compact set. This is the final hypothesis to be satisfied for applying the fixed point result, Theorem \ref{BK}. 

Applying the fixed point theorem yields the existence of a function $z \in L^2(0,T;H^1(\Omega)) \cap H^1(0,T;V')$ and an associated weak solution $(\zeta(z),p(z))$ for which $z \in \mathscr F(z)$.
\end{proof}

\begin{remark} We again note that, owing the presence of the nonlinearity, regularity of the solution $\zeta$---in particular of $\nabla \cdot \bu$---needs to be better than $L^2(0,T;L^2(\Omega))$. This is because we must obtain compactness in $\zeta$ to utilize the Nemytskii property of $k(\cdot)$. Moreover, if $d_0 \in V'$ only, this would preclude our ability to obtain such regularity, as this would seem to lower the evolution of $Bp=\nabla \cdot \bu$ to the regularity of $V'$.
\end{remark}

Now we proceed to obtain a weak solution as in Definition \ref{zerosol} to the original problem, which is stated in strong form in Section \ref{translation} as \eqref{abstractform**}. By applying the fixed point result from the previous section, Theorem \ref{th:supporting}, with appropriately scaled/modified data $S, d_0, k(\cdot)$ we will finally prove the main result Theorem \ref{th:main}.

\begin{proof}[Proof of Theorem \ref{th:main}] Considering the strong form of the nonlinear dynamics in \eqref{abstractform**}, we apply the translation as in Section \ref{translation}. For the new variable $\bw=\bu-\bu_{\bF}$, where $\bu_{\bF}$ is defined by \eqref{ellipticsolver}, we obtain the system \eqref{abstractform***}, which we restate here for clarity:
\begin{equation}
\begin{cases}
\cE(\bw)=-\nabla p  &~ \in L^2(0,T; \mathbf V')\\
[c_0 p + \nabla \cdot \bw]_t-\nabla \cdot [k_{\bF}(c_0p+\nabla \cdot \bw )\nabla p]=S+\nabla \cdot \bu_{\bF,t}&~ \in L^2(0,T;V')\\
c_0 p(0) + \nabla\cdot\bw(0)=d_0-\nabla \cdot \bu_{\bF}(0)&~ \in L^2(\Omega).
\end{cases}
\end{equation}
Under the regularity assumptions on $\bF$, we have $\bu_{\bF} \in L^2(0,T; \mathbf H^2(\Omega) \cap \bV)\cap H^1(0,T; \bV)$. Hence, in the above strong form, we have $\nabla \cdot \bu_{\bF,t} \in L^2(0,T;L^2(\Omega)) \subset L^2(0,T;V')$. Additionally, $\nabla \cdot \bu_{\bF} \in C([0,T]; L^2(\Omega))$ \cite{evans,showmono}, so we can extract its time trace at $t=0$. Finally, since $\nabla\cdot \bu_{\bF} \in C([0,T];L^2(\Omega))$, we observe $k_{\bF}(\cdot) = k(\cdot + \nabla \cdot \bu_{\bF})$ satisfies the hypotheses of Assumption \ref{Assumpk} since $k(\cdot)$ is assumed to satisfy them. We can then reduce the above system to a version of \eqref{refthisone}, where the corresponding ``data" satisfies the hypotheses of Theorem \ref{th:supporting}, and we can apply the result of the fixed point theorem. 

Doing so, we obtain a function $p \in L^2(0,T;V)$ that satisfies: 
\begin{itemize}
\item For every $q \in L^2(0,T; V)$, 
\begin{equation*}
\int_0^T \big\langle[(c_0\mathbf I  + B)p]', q\big\rangle \ dt + \int_0^T \big(k(c_0p+Bp+\nabla \cdot \bu_{\bF})\nabla p, \nabla q)dt = \int_0^T \langle S, q\rangle dt+\int_0^T (\nabla \cdot \bu_{\bF,t}, q) dt.
\end{equation*}
\item $\zeta=c_0p+Bp$ a.e. $t$ and a.e. $\mathbf x$.
\item $\big[(c_0\mathbf I  + B)p\big](0) = d_0$ in the sense of $V'$. 
\item The following estimates hold:
\begin{align*}
c_0||p(t)||_{L^2(\Omega)}^2+||B^{1/2}p(t)||^2_{L^2(\Omega)}+\|p\|_{L^2(0,T;V)} \lesssim &~ ||d_0||^2_{L^2(\Omega)}+||\nabla \cdot \bu_{\bF}(0)||^2_{L^2(\Omega)}+ \|S\|^2_{L^2(0,T;V')}  \\[.2cm]
\|[(c_0 \mathbf I + B)p]'\|_{L^2(0,T;V')} \lesssim &~  \|p\|_{L^2(0,T;V)} + \|S\|_{L^2(0,T;V')}+||\nabla \cdot \bu_{\bF,t}||_{L^2(0,T;V')} \\
\|(c_0\mathbf I  + B)p\|_{L^2(0,T;H^1(\Om))} \lesssim &~ \|p\|_{L^2(0,T;V)}. 
\end{align*}
\end{itemize}

From this we can obtain a $\bw \in L^2(0,T; \mathbf  H^2(\Omega) \cap \bV)$ and then a $\bu=\bw+\bu_{\bF}$, resulting in  
$$\bu = \cE^{-1}(-\nabla p + \bF).$$ This $\bu$ has the necessary property that it can be identified via the relation 
$$\nabla \cdot \bu = Bp+\nabla \cdot \bu_{\bF}$$
in a point-wise sense. The above is sufficient to conclude the weak form of the elasticity equation, \eqref{weakform}. Using the fact that 
$||\bu_{\bF}||_{\bV} \lesssim ||\bF||_{\bV'}$ and the embedding $H^1(0,T;L^2(\Omega)) \hookrightarrow C([0,T];L^2(\Omega))$, we re-interpret the estimates above as
\begin{align}\label{final1}
c_0||p||_{L^{\infty}(0,T;L^2(\Omega))}^2+\|p\|_{L^2(0,T;V)}^2 \lesssim &~ ||d_0||^2_{L^2(\Omega)}+DATA\big|_0^T  \\[.2cm]
||\bu||_{L^2(0,T;\mathbf H^2\cap \bV)}^2 \lesssim&~||p||_{L^2(0,T;V)}^2+||\bF||^2_{L^2(0,T;\mathbf L^2(\Omega))}\\
\|[c_0p+\nabla \cdot \bu]'\|_{L^2(0,T;V')} \lesssim &~  \|p\|_{L^2(0,T;V)} + DATA\big|_0^T \label{final3}\\
\|c_0p+\nabla \cdot \bu\|_{L^2(0,T;H^1(\Om))} \lesssim &~ \|p\|_{L^2(0,T;V)}. \label{final4}
\end{align}
(Note that in these final estimates for the original problem we have omitted references to the $B$ operator, in doing so, discarding the information on $B^{1/2}p \in L^{\infty}(0,T;L^2(\Omega)).$)

This finally concludes the proof of Theorem \ref{th:main}.
\end{proof}

\subsection{Proof of Theorem \ref{th:main} - Incompressible Constituents, $c_0=0$}
Consider a sequence or real numbers $\{c_0^m\}_1^{\infty}$ such that $c_0^m <1$, for all $m \geq 1$, and  $c_0^m \searrow 0$ as $m \to \infty$. To each $c_0^m$ we attribute a particular weak solution $(p^m,\bu^m) \in L^2(0,T;V) \times L^2(0,T;\bV)$ to \eqref{abstractform**} (i.e., a solution in the sense of Definition \ref{zerosol}) with a fixed initial condition $d_0 \in L^2(\Omega)$ for all $m$ (the intended initial condition when $c_0=0$). Such a solution has, by construction in the proof of Theorem \ref{th:main},  the energy estimate:
$${c_0^m}||p^m||_{L^{\infty}(0,T;L^2(\Omega))}^2 +k_{1}\int_0^T ||\nabla p^m||^2d\tau \lesssim ||d_0||^2+ DATA\big|_0^T.$$
Note, the bound on the RHS is uniform in $m$. This provides a weak-* subsequential limit point for $\sqrt{c_0^m}p^m \in L^{\infty}(0,T;L^2(\Omega))$), and a weak subsequential limit point labeled $p$ for the sequence $p^m \in L^2(0,T;V)$. From the elasticity equation, we infer that the associated $\bu^m \in L^2(0,T;\mathbf H^2(\Omega)\cap \bV)$ has the bound
$$\int_0^T||\bu^m||^2_{\mathbf H^2(\Omega)\cap \bV}dt \lesssim \int_0^T||\cE(\bu^m)||^2dt \lesssim \int_0^T||\nabla p^m||^2dt+\int_0^T||\bF||_{\mathbf L^2(\Omega)}^2dt,$$ and taken in conjunction with the estimate above, leads to a uniform-in-$m$ estimate. To the sequence $\bu^m$ we also associate a weak subsequential limit point (with the same subsequence associated to $p$, perhaps upon re-indexing):
$$\bu^m \rightharpoonup \bu \in L^2(0,T;\bV).$$

From the construction of the weak solution, we have that $p^m$ satisfies for all $q \in L^2(0,T;V)$:
\begin{equation}\label{thisweak} \int_0^T\langle \zeta_t^m(t), q(t) \rangle dt + \int_0^T(k(\zeta^m(t))\nabla p^m(t),\nabla q(t))dt = \int_0^T\langle S(t),  q(t)\rangle dt,\end{equation}
with $\zeta^m = c_0^mp^m+\nabla \cdot \bu^m=[c_0^m\mathbf I + B]p^m+\nabla \cdot \bu_{\bF} \in L^2(0,T;L^2(\Omega))$ and the associated bound
\begin{align} ||\zeta^m||_{L^2(0,T;L^2(\Omega))} =&~ ||c_0^mp^m+\nabla \cdot \bu^m||_{L^2(0,T;L^2(\Omega))} \\ \nonumber
\le &~ c_0^m||p^m||_{L^2(0,T;L^2(\Omega))}+||\nabla \cdot \bu^m||_{L^2(0,T;L^2(\Omega))} \\\nonumber
\le & ~C\big[||p||_{L^2(0,T;V)} + ||\bu||_{L^2(0,T,\bV)}\big]\\
\lesssim & ~||d_0||^2+DATA\big|_{0}^T+||\bF||^2_{L^2(0,T;\mathbf L^2(\Omega))}
\end{align}
by lower weak semicontinuity of the norm, Poincar\'e, and the boundedness of the sequence $\{c_0^m\}$. Hence there is $\zeta \in L^2(0,T; L^2(\Omega))$ so that 
$$\zeta^m \rightharpoonup \zeta \in L^2(0,T;L^2(\Omega)).$$

As  in the construction of the solution, satisfying the equation \eqref{thisweak} is also sufficient to deduce that $\zeta^m_t \in L^2(0,T;V')$ (with associated uniform-in-$m$ bound as above in \eqref{final3}).  Again, by such a uniform-in-$m$ bound on $\zeta_t^m \in L^2(0,T;V')$ we can extract a weak subsequential limit in that space, and identify in the standard way for the distributional derivative \cite{kesavan} $$\zeta_t^m \rightharpoonup \zeta_t \in L^2(0,T;V').$$
Additionally, the sequence $p^m$ itself is bounded as
$$||p^m||_{L^2(0,T;L^2(\Omega))} \le C||p^m||_{L^2(0,T;V)} \lesssim ||p||_{L^2(0,T;V)}.$$
This implies that the sequence $[c_0^mp^m] \to 0 \in L^2(0,T;L^2(\Omega))$, as well as, for any $\phi \in V$ and $f \in \mathscr D(0,T)$,
$$c_0^m\int_0^T(p^m(t),\phi)_{\Omega}f'(t) dt \to 0,$$ as $c_0^m \to 0$.
From which we deduce by the definition of the distributional derivative in $t$ (see Remark \ref{equivvarforms}) that 
\begin{equation}\label{thiszero}c_0^m\int_0^T\langle p_t^m(t), q(t)\rangle_{V'\times V} dt \to 0,~~\forall q \in L^2(0,T;V).\end{equation}

Now, it is immediate from the previously established convergences that $\zeta^m = c_0^mp^m+\nabla \cdot \bu^m \rightharpoonup \nabla \cdot \bu \in L^2(0,T;L^2(\Omega))$, and by uniqueness of limits, we must have that $\zeta=\nabla \cdot \bu \in L^2(0,T;L^2(\Omega))$, possibly passing to a subsequence. Moreover,  since $\zeta_t^m \in L^2(0,T;V')$ with $c_0^mp^m_t \rightharpoonup 0$, we obtain that 
$$\int_0^T\langle \zeta_t^m,q \rangle dt \to \int_0^T \langle \zeta_t,q\rangle,~~~~\int_0^T\langle \zeta_t^m,q \rangle dt  = \int_0^T \big[\langle c_0^mp^m_t,q\rangle + \langle \nabla \cdot \bu_t^m, q\rangle\big]dt.$$
But we know, again by uniqueness of limits, since we have \eqref{thiszero}, that
$$\int_0^T\langle \zeta^m_t,q\rangle \to  \int_0^T\langle  \zeta_t,q\rangle dt = \int_0^T\langle \nabla \cdot \bu_t,q\rangle dt.$$

Lastly, we need to show the nonlinear term exhibits convergence; namely, we want to show
\begin{equation}\label{finalconv} \int_0^T(k(\zeta^m)\nabla p^m , \nabla q)dt \to \int_0^T(k(\nabla \cdot \bu)\nabla p , \nabla q)dt.\end{equation}
This will follow immediately as in Step II in the proof of Theorem \ref{th:supporting}, using the Nemytskii property of $k(\cdot)$. Indeed, we first improve the bounds on $\zeta^m$:
\begin{align} \nonumber ||\zeta^m||_{L^2(0,T;H^1(\Omega))} =&~ ||c_0^mp^m+\nabla \cdot \bu^m||_{L^2(0,T;H^1(\Omega))} \\ \nonumber
\lesssim&~c_0^m||p^m||_{L^2(0,T;V)}+||\bu^m||^2_{\mathbf H^2(\Omega)}\nonumber \\
\lesssim&~||p||_{L^2(0,T;V)}+||\bu||_{L^2(0,T;\bV\cap \mathbf H^2(\Omega))}
\end{align}
This again results in uniform-in-$m$ boundedness, and we note that (perhaps on a subsequence), again by The Lions-Aubin criterion and the uniform boundedness of $\{\zeta^m\} \subset L^2(0,T;\mathbf H^1(\Omega))$ and $\{\zeta^m_t\} \subset L^2(0,T;V')$, we can improve the convergence of $\zeta^m \to \zeta \in L^2(0,T; L^2(\Omega))$ to {\em strong} convergence. At this point, $k(\zeta^m) \to k(\zeta) \in L^2(0,T;L^2(\Omega))$, and we invoke the identification $\zeta=\nabla \cdot \bu$. From this, \eqref{finalconv} follows.

Hence, upon taking  the limit as $c_0^m \to 0$ in \eqref{thisweak}, we obtain that the subsequential limit point $(p,\bu) \in L^2(0,T; V \times \bV)$ satisfies:
\begin{equation}\label{thisweak0} \int_0^T\langle \nabla\cdot \bu_t(t),q \rangle dt + \int_0^T\big(k(\zeta(t))\nabla p(t),\nabla q\big)f(t)dt = \int_0^T\langle S(t),  q\rangle  f(t) dt,\end{equation}
for any $q \in L^2(0,T;V)$. 

As in the $c_0>0$ case, we recover the elasticity equation faithfully, so
$$\cE(\bu)=-\nabla p+\mathbf F,~~\text{a.e.}~\mathbf x, t,$$
which of course yields the weak form in Definition \eqref{WeakSol}.

Finally, by the equation, we again have that $\nabla \cdot \bu \in H^1(0,T;V') \cap L^2(0,T;H^1(\Omega))$, so $\nabla \cdot \bu \in C([0,T]; V')$, which permits the initial condition for the quantity $\nabla \cdot \bu (0)=d_0$ in the $V'$ sense, though $d_0 \in L^2(\Omega)$. This ensures that $(p,\bu) \in L^2(0,T; V \times \bV)$ is in fact a weak solution with $c_0=0$, in the sense of Definition \ref{zerosol} (see Remark \ref{dataremark2}).

\section{Uniqueness -  Proof of Theorem \ref{th:main2}}\label{Uniqueness}
In this section we divide our considerations into two approaches, depending on which terms have point-wise-in-$t$ control: (i) the first one considers a weak solution to the full dynamical system in both dependent variables $(\bu,p)$; (ii) the second approach considers the reduced system phrased in terms of $p$ and $Bp$. There are subtle differences in the approaches and in the requisite hypotheses to obtain unique solutions. We point out that---since we do not construct strong solutions in this paper---we phrase our results below as: {\em If weak solutions exhibit  additional regularity properties, then among such weak solutions there is uniqueness}. We recall the space of weak solutions (for given data $d_0,S,\bF,k(\cdot)$) with the additional regularity $p_t \in L^2(0,T;L^2(\Omega))$ denoted by $\mathcal W_T$---see \eqref{W_T}. In particular, our result says: {\em If one weak solution in $\mathcal W_T$ has additional spatial regularity, then all weak solutions in $\mathcal W_T$ are equal to it.}

Let us introduce both types of formal energy identities utilized later in the proof of Theorem \ref{th:main2} for  $c_0\ge 0$. 
Consider $(\bu^1,p^1)$ and $(\bu^2,p^2)$ two weak solutions coming from $\mathcal W_T$. Then, we subtract the weak forms of the equations as in Definition \ref{WeakSol} and test the pressure equation with $\overline p = p^1-p^2$ and the elasticity equation with $\overline{\bu}_t=\bu^1_t-\bu^2_t$. The latter is justified, as $\bu^i_t = \mathcal E^{-1}\big(-\nabla p^i_t+\bF_t\big) \in L^2(0,T;\bV)$ since $\bF_t \in L^2(0,T;\bV')$ by our regularity hypotheses on $\bF$ and $\nabla p^i_t \in L^2(0,T;\bV')$ since $p^i_t \in L^2(0,T;L^2(\Omega))$.
 
 This yields the unsimplified identities:
\begin{align}\label{formalfull1}
\int_0^T\langle \cE(\overline\bu),\overline \bu_t\rangle_{\bV'\times \bV} dt-\int_0^T( \overline p, \nabla \cdot \overline \bu_t)_{\Omega} dt = 0 \\ \label{formalfull2}
\int_0^T \big[c_0( \overline p_t, \overline p )_{\Omega} +( \nabla \cdot \overline\bu_t, \overline p)_{\Omega} \big]dt +\int_0^T\big(k(\zeta^1)\nabla p^1-k(\zeta^2)\nabla p^2, \nabla \overline p\big)_{\Omega}dt = 0,
\end{align}
where we denote $\zeta^i=c_0p^i+\nabla \cdot \bu^i$. 

Now, let us consider the formal energy relation for the (partial, omitting the equation for $\bu$) reduced formulation making use of the $B$ operator, again, with no simplifications:
\begin{align}\label{reduced}
\big( [c_0\mathbf I+B]\overline p_t,\overline p \big )_{\Omega}+\big(k(\zeta^1)\nabla p^1-k(\zeta^2)\nabla  p^2, \nabla \overline p)_{\Omega} = 0.\end{align}

We have replaced the $V'\times V$ duality pairings in the pressure equations above through the assumption that $p^i_t \in L^2(0,T;L^2(\Omega))$. Both approaches to uniqueness hinge on the analysis of the nonlinear term. The goal is to apply  a version of Gr\"onwall to the formal estimates. 

Let us begin by estimating directly this nonlinear term above.
\begin{align}
\int_0^T \left([k ^1(t)\nabla p ^1(t)-k ^2(t)\nabla p ^2(t)],\nabla \overline p\right)dt = &~\int_0^T\big([k ^1(t)-k ^2(t)]\nabla p^1 (t),\nabla \overline p(t)\big)dt \\
&+\int_0^T\big(k^2 (t)\nabla \overline p (t),\nabla \overline p (t)\big)dt, \nonumber
\end{align}
where we have used the shorthand $k^i(t) \equiv k(c_0p^i+\nabla \cdot \bu^i)$.
Using the lower bound on the permeability function $0<k_1\le k(\cdot)$ from Assumption \ref{Assumpk} the second term above will serve as dissipation to help with further estimation
$$k_1|| \overline p||^2_{L^2(0,T; V)} \le \int_0^T\big(k^2 (t)\nabla \overline p (t),\nabla \overline p (t)\big)dt.$$

The remaining nonlinear term on the RHS can be estimated in two ways, yielding the two distinct hypotheses. With the supplemental hypothesis that $k \in Lip(\mathbb R)$ with $L_k>0$ the global Lipschitz constant, we have
$$|k^1(t)-k^2(t)| = \big|k(\zeta^1)-k(\zeta^2)| \le L_k|\overline \zeta|,$$
again where $\overline \zeta = \zeta^1-\zeta^2$, and $\zeta^i = c_0p^i+\nabla \cdot \bu^i.$

Using Cauchy-Schwartz, then, we have
$$\int_0^T\big([k^1(t)-k^2(t)]\nabla p^1(t),\nabla \overline p(t)\big)dt  \le L_k\int_0^T||\nabla p^1||_{L^{\infty}(\Omega)} ||\overline \zeta ||_{L^2(\Omega)}||\nabla \overline p||_{L^2(\Omega)}dt.$$

We proceed straightforwardly, retaining the supremum term under the integration:
\begin{align}\nonumber \int_0^T\big([k ^1(t)-k ^2(t)]\nabla p^1 (t),\nabla \overline p (t)\big)dt  \le&~ \int_0^T\big[L_k||\nabla p ^1||_{L^{\infty}(\Omega )}\big] ||\overline\zeta(t)||_{L^2(\Omega )}||\nabla \overline p ||_{L^2(\Omega )}dt  \\\label{way2}
\le &\int_0^T\dfrac{L_k^2}{4\epsilon}||\nabla \overline p (t)||^2_{L^{\infty}(\Omega )}\big[||\overline \zeta(t)||^2_{L^2(\Omega)}\big]dt \\ \nonumber
&+ \epsilon \int_0^T||\nabla \overline p ||^2_{L^2(\Omega )}dt,~~\forall~\epsilon>0.
\end{align}
\begin{remark}
One can also pull the supremum term outside the integral; this is akin to the approach taken in \cite{cao}.
\begin{align}\nonumber \int_0^T\big([k^1(t)-k^2(t)]\nabla p^1(t),\nabla \overline p(t)\big)dt  
\label{way1}
\le &~ \dfrac{L_k^2}{4\epsilon}||\nabla p^1 ||^2_{L^{\infty}(0,T;L^{\infty}(\Omega ))}\int_0^T||\overline \zeta(t)||^2_{L^2(\Omega )}dt \\\nonumber 
&+ \epsilon \int_0^T||\nabla \overline p ||^2_{L^2(\Omega )}dt,~~\forall~\epsilon>0.
\end{align}
\end{remark}

Anticipating the use of Gr\"onwall below, we note that $\nabla p^1 \in L^{\infty}(\Omega)$ is obtained through the Sobolev embeddings in 3-D if for instance, if $p^1 \in H^3(\Omega)$ (or any Sobolev index above $2.5$).

\subsection{Uniqueness for the Full System; $c_0 \ge 0$}
In working with the full system, we can exploit cancellation in the structure of the Biot system to obtain more explicit energy estimates to be used for uniqueness. In this framework, as we shall see, we need to specify the initial displacement $\bu_0 \in \bV$ independently of $c_0p_0$, recalling that $\zeta(0) = d_0 = [c_0p+\nabla \cdot \bu](0).$

Then, we consider \eqref{formalfull1}--\eqref{formalfull2} and add the two equations, cancelling cross-terms on the RHS and simplifying by integration by parts. This yields:
\begin{equation}
a(\overline \bu(t),\overline \bu(t)) +c_0||\overline p(t)||^2+ 2\int_0^t(k(\zeta^1)\nabla p^1-k(\zeta^2)\nabla p^2, \nabla \overline p) dt= e(\overline \bu_0,\overline \bu_0)+c_0||\overline p_0||^2.
\end{equation}
We recall the estimate on a single trajectory:
\begin{equation}
||\bu||^2_{L^{\infty}(0,T;\bV)}+c_0||p||^2_{L^{\infty}(0,T;L^2(\Omega))}+k_1||p||^2_{L^2(0,T;V)} \le C(\bu_0,p_0)+DATA\big|_0^T.
\end{equation}
 The resulting estimate on $(\overline \bu, \overline p)$ as above in \eqref{way2} is
\begin{align}
||\overline{\bu}||^2_{L^{\infty}(0,T;\bV)}+c_0||\overline p||^2_{L^{\infty}(0,T;L^2(\Omega))}+k_1||\overline p||^2_{L^2(0,T;V)}  
\lesssim&~C(\overline{\bu_0},\overline p_0)+\int_0^T\big( [k^1(t)-k^2(t)]\nabla p^1 ,\nabla \overline p\big)dt, \nonumber \\\nonumber
\lesssim&~\dfrac{L^2_k}{4\epsilon}\int_0^T||\nabla \overline p (t)||^2_{L^{\infty}(\Omega )}\big[||\overline \zeta(t)||^2_{L^2(\Omega)}\big]dt \\
&+ \epsilon \int_0^T||\nabla \overline p ||^2_{L^2(\Omega )}dt
\end{align}
We then note that $\overline \zeta = c_0\overline p +\nabla \cdot \overline \bu$, and hence
$$||\overline \zeta||^2 \lesssim c_0^2||\overline p||^2+||\overline \bu||_{\bV}^2.$$
Absorbing on the RHS by choosing, e.g., $\epsilon = \frac{k_1}{2}$, we obtain:
\begin{equation}
||\overline{\bu}||^2_{L^{\infty}(0,T;\bV)}+c_0||\overline p||^2_{L^{\infty}(0,T;L^2(\Omega))} \lesssim \dfrac{L^2_k}{k_1}\int_0^T||\nabla \overline p (t)||^2_{L^{\infty}(\Omega )}\big[c_0^2||\overline p||^2+||\overline \bu||_{\bV}^2\big]dt.\end{equation}
If $c_0<1$, then we have
\begin{equation}
||\overline{\bu}||^2_{L^{\infty}(0,T;\bV)}+c_0||\overline p||^2_{L^{\infty}(0,T;L^2(\Omega))} \lesssim \dfrac{L^2_k}{k_1}\int_0^T||\nabla \overline p (t)||^2_{L^{\infty}(\Omega )}\big[c_0||\overline p||^2+||\overline \bu||_{\bV}^2\big]dt\end{equation}
If $c_0>1$, then we have
\begin{equation}
||\overline{\bu}||^2_{L^{\infty}(0,T;\bV)}+c_0||\overline p||^2_{L^{\infty}(0,T;L^2(\Omega))} \lesssim \dfrac{L^2_kc_0}{2k_1}\int_0^T||\nabla \overline p (t)||^2_{L^{\infty}(\Omega )}\big[c_0||\overline p||^2+||\overline \bu||_{\bV}^2\big]dt\end{equation}
From here, we may invoke $L^2$-kernel version of Gr\"onwall as in \cite[Theorem 9]{dragonmir}, and uniqueness of solutions is deduced in the standard way.

\subsection{Reduced Equation Uniqueness; $c_0>0$}

In this section we consider working with the reduced equation directly. We assume only that $d_0 \in L^2(\Omega)$, forgoing any assumptions on $\bu(t=0)$. As we will see, we need to assume $c_0>0$ as well.

So, given $\bF$ and $S$ as above, let us consider two weak solutions $p^i(t) \in L^2(0,T;V)\cap H^1(0,T;L^2(\Omega))$ (this follows, for instance, if $(\bu,p) \in \mathcal W_T$ and the problem is reduced through the $B$ operator) to
$$[c_0I+B]p_t-\nabla \cdot k(\zeta)\nabla p = S+\nabla \cdot \bu_{\bF,t} \in L^2(0,T;V'),$$
using the notation from Section \ref{translation}.
We will denote $\zeta=c_0p+Bp+\nabla \cdot \bu_{\bF}$ here for the fluid content.

\begin{remark}[Weakening hypotheses] Here, the main regularity we need is to be able to interpret the pairing 
$\langle [c_0\mathbf I+B]p_t,p\rangle$ in some sense. The challenge is that the properties of $B$ in both $V$ and $V'$ are not clear (e.g., self-adjointness), and for $p \in L^2(0,T;V)$, it is not clear that $Bp\in L^2(0,T;V)$.
\end{remark}

Let $\overline p=p^1-p^2$ as before, and hence $\overline \zeta=c_0\overline p+B\overline p$. Then the straightforward energy relation in \eqref{reduced} simplifies to
\begin{align*}
\frac{1}{2}\dfrac{d}{dt}\big[c_0||\overline p||^2+(B\overline p,\overline p)\big]+(k(\zeta^1)\nabla p^1-k(\zeta^2)\nabla p^2, \nabla \overline p) =0.
\end{align*}
Add and subtract, anticipating using the Lipschitz property of $k$:
\begin{align*}
c_0||\overline p(t)||^2+||B^{1/2}\overline p(t)||^2+2\int_0^t(\nabla p_1[k(\zeta^1)-k(\zeta^2)],\nabla \overline p)+(k(\zeta^2)\nabla \overline p, \nabla \overline p)dt =(\overline{d_0},\overline p(0))\end{align*}
Since $c_0>0$, we can recover $p(0)=p_0=[c_0\mathbf I+B]^{-1}d_0$. 
Estimating as in the previous section and invoking the assumptions on $k(\cdot)$, we obtain
\begin{align*}
c_0||\overline p||_{L^{\infty}(0,T;L^2(\Omega))}^2+||B^{1/2}\overline p||_{L^{\infty}(0,T;L^2(\Omega))}^2&+k_1||\overline p||_{L^2(0,T;V)}^2 \\
\lesssim&~ ||\overline{d_0}||^2+\dfrac{L^2_k}{k_1}\int_0^T|| \nabla p^1 ||_{L^{\infty}(\Omega)}^2~ ||\overline \zeta ||_{L^2(\Omega)}^2 d\tau.
\end{align*}
To proceed as before with Gr\"onwall, it is imperative here that $c_0>0$ since we do not know that $B$ or $B^{1/2}$ is coercive. We then estimate $\overline \zeta$ carefully:
$$||\overline \zeta|| = ||c_0\overline p+B\overline p|| \le C||p||,$$ where all norms are taken in the $L^2(\Omega)$ sense. Simplifying the above inequality, and invoking this estimate, we obtain
\begin{equation}
c_0||\overline p||^2_{L^{\infty}(0,T;L^2(\Omega)} \lesssim ||\overline{d_0}||_{L^2(\Omega)}^2+\dfrac{L_k^2}{k_1}\int_0^T|| \nabla p^1 ||_{L^{\infty}(\Omega)}^2~ ||\overline p ||_{L^2(\Omega)}^2 dt.
\end{equation}
Since $p \in H^1(0,T;L^2(\Omega))$ in this case, $p \in C([0,T];L^2(\Omega))$ and 
at this point, Gr\"onwall can be applied as before to obtain uniqueness in $p$ a.e. $t$ and $\mathbf x$, which can then be transferred through the elasticity isomorphism $\cE$ to $p$. This results in uniqueness of the weak solution $(\bu,p) \in \mathcal W_T$.

\section{Appendix A: Multivalued Fixed Point}\label{app1}
We begin with a handful of definitions and straightforward theorems that will be relevant to the fixed point we are using in the construction of weak solutions. All of these considerations are taken from \cite{guide}.

The basic setting considers $\phi: X \twoheadrightarrow Y$ as a correspondence, where, for each $x \in X$, $\phi(x)$ represents a subset of $Y$. (We do not use the equivalent point of view that $\phi: X \to 2^Y$.) The $\twoheadrightarrow$ notation indicates that $\phi$ need not be a function, but is thought of as a ``multi-valued function."

\begin{definition}[Notions of Closedness and Compactness]
A correspondence $\phi: X \twoheadrightarrow Y$ between topological spaces  is {\em closed-valued} if $\phi(x)$ is a closed set for each $x \in X$. The analogous definition is used for a {\em compact-valued} correspondence. 

A correspondence $\phi: X \twoheadrightarrow Y$ between topological spaces is {\em closed} (or {\em has a closed graph}) if 
$$\mathscr G(\phi) \equiv \{ (x,y) \in X \times Y ~:~ y \in \phi(x)\}$$ 
is closed as a subset of $X \times Y$. 
\end{definition}

\begin{definition}
A correspondence $\phi: X \twoheadrightarrow Y$ between topological spaces is called {\em upper hemicontinous} (or UHC) at the point $x \in X$ if for every neighborhood $U \ni x$ there is a neighborhood $V \ni x$ such that
$$z \in V \implies \phi(z) \subseteq U.$$ We say that $\phi$ is UHC on $X$ if it is UHC at each $x \in X$. 
\end{definition}

The next theorem provides the relationship between graph closedness and UHC. (We do not explicitly use this version in the body of the paper.)
\begin{theorem}
Suppose $\phi: X \twoheadrightarrow Y$ is closed-valued. If $\phi$ is UHC at $x$, then for all $x_n \in X$, $y \in Y$, and $y_n \in \phi(x_n)$
$$x_n \to x~\text{ and }~y_n \to y~~\implies ~~y \in \phi(x).$$
If $\phi$ is closed-valued and the range of $\phi$ is compact, then the converse holds. 
\end{theorem}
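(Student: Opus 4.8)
The plan is to treat the two implications separately, each by contradiction, exploiting the interplay between closed-valuedness and the standard formulation of upper hemicontinuity: $\phi$ is UHC at $x$ precisely when, for every open $U \subseteq Y$ with $\phi(x) \subseteq U$, there is a neighborhood $V$ of $x$ with $\phi(V) \subseteq U$. Since all of our applications take place with $X = Y = L^2(0,T;L^2(\Omega))$, I would work in the metric setting throughout; write $d$ for the metric on $Y$ and $\mathrm{dist}(z,C) = \inf_{w\in C} d(z,w)$ for the distance to a set $C \subseteq Y$. This makes first countability and sequential arguments available.

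\emph{Forward implication.} Assume $\phi$ is closed-valued and UHC at $x$; let $x_n \to x$ and $y_n \to y$ with $y_n \in \phi(x_n)$, and suppose toward a contradiction that $y \notin \phi(x)$. Since $\phi(x)$ is closed, $\delta := \mathrm{dist}(y,\phi(x)) > 0$. Put $U = \{z : \mathrm{dist}(z,\phi(x)) < \delta/2\}$, an open set with $\phi(x) \subseteq U$, and $W = \{z : d(z,y) < \delta/2\}$, so that $U \cap W = \emptyset$. By UHC at $x$ there is a neighborhood $V$ of $x$ with $\phi(V) \subseteq U$; since $x_n \to x$, eventually $x_n \in V$, so $y_n \in \phi(x_n) \subseteq U$, while $y_n \to y$ gives $y_n \in W$ for all large $n$ --- contradicting $U \cap W = \emptyset$. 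Hence $y \in \phi(x)$.

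\emph{Converse.} Assume $\phi$ is closed-valued, its range is (relatively) compact, and the displayed sequential property holds; I want $\phi$ UHC at $x$. If not, fix an open $U \supseteq \phi(x)$ witnessing the failure, so that every neighborhood of $x$ contains a point whose image is not contained in $U$. Using a countable neighborhood base at $x$, select $x_n \to x$ and $y_n \in \phi(x_n) \setminus U$. By (relative) compactness of the range, pass to a subsequence with $y_{n_k} \to y$; since $Y \setminus U$ is closed, $y \notin U$. But $x_{n_k} \to x$, $y_{n_k} \to y$, and $y_{n_k} \in \phi(x_{n_k})$, so the sequential property forces $y \in \phi(x) \subseteq U$ --- a contradiction. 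Therefore $\phi$ is UHC at $x$.

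\emph{Main obstacle.} This is a soft, general-topology fact, so the real care lies in the topological bookkeeping rather than in any hard estimate. In the forward direction, closed-valuedness is used precisely to separate the closed set $\phi(x)$ from the point $y \notin \phi(x)$ by disjoint open sets; in an arbitrary topological space this needs a regularity-type axiom, but in the normed-space context here it is immediate from $\mathrm{dist}(y,\phi(x)) > 0$. In the converse, replacing ``for every neighborhood of $x$'' by a sequence $x_n \to x$ uses first countability, and extracting a convergent subsequence of $(y_n)$ uses that the range is relatively compact --- exactly the property verified for $\mathscr F$ in Step III of the proof of Theorem~\ref{th:supporting}. Finally, I would flag that the ``neighborhood $U \ni x$'' in the preceding definition of UHC should be read as ``an open set $U \subseteq Y$ with $\phi(x) \subseteq U$,'' which is the reading the argument above uses.
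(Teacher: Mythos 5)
Your proof is correct. Note that the paper does not actually prove this theorem: it is stated in Appendix A as background quoted from the cited reference (Aliprantis and Border), so there is no in-paper argument to compare against; your proof is the standard one, correctly specialized to the metric setting ($X=Y=L^2(0,T;L^2(\Omega))$) in which the paper uses these facts. Both directions check out: the separation of the closed set $\phi(x)$ from the point $y$ by the two open sets $U$ and $W$ is valid (the triangle inequality gives $U\cap W=\emptyset$), and in the converse the extraction of a convergent subsequence from $(y_n)$ is exactly where (relative) compactness of the range enters, with first countability of $X$ justifying the reduction of ``every neighborhood of $x$'' to a sequence. You are also right to flag that the paper's displayed definition of UHC contains a slip --- the set $U$ there must be read as an open subset of $Y$ containing $\phi(x)$, not a neighborhood of $x$ --- and your argument uses the correct reading.
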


Alternatively, the following is the criteria we invoke in the proof of our main result:
\begin{theorem}\label{theoneweuse}
Suppose $\phi: X \twoheadrightarrow Y$ is an UHC correspondence. If $\phi$ is closed-valued (and $Y$ is regular) OR $\phi$ is compact-valued (and $Y$ is Hausdorff), then $\phi$ is closed. 
\end{theorem}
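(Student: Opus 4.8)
The plan is to prove Theorem \ref{theoneweuse} by a direct topological argument, using nets (generalized sequences) to establish closedness of the graph $\mathscr{G}(\phi)$. Since the ambient spaces need not be metrizable, sequences alone are insufficient, so the natural tool is convergence of nets; a net-based argument is what underlies the classical result (see \cite{guide}).

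First I would take a net $(x_\alpha, y_\alpha)_{\alpha \in D} \subseteq \mathscr{G}(\phi)$ converging to a point $(x,y) \in X \times Y$, and aim to show $(x,y) \in \mathscr{G}(\phi)$, i.e.\ $y \in \phi(x)$. Suppose for contradiction that $y \notin \phi(x)$. In the \emph{closed-valued, $Y$ regular} case, $\phi(x)$ is a closed set not containing $y$, so by regularity of $Y$ there exist disjoint open sets $U \supseteq \phi(x)$ and $W \ni y$. In the \emph{compact-valued, $Y$ Hausdorff} case, $\phi(x)$ is compact and $y \notin \phi(x)$; using the Hausdorff property one separates the compact set $\phi(x)$ from the point $y$ by disjoint open sets $U \supseteq \phi(x)$ and $W \ni y$ (the standard point-versus-compact-set separation lemma). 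In either case we have produced an open $U \supseteq \phi(x)$ with $y$ lying in an open set $W$ disjoint from $U$.

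Next I would invoke the UHC property of $\phi$ at the point $x$: since $U$ is a neighborhood of $\phi(x)$, there is a neighborhood $V \ni x$ such that $z \in V \implies \phi(z) \subseteq U$. Because $x_\alpha \to x$, the net is eventually in $V$, so for all sufficiently large $\alpha$ we have $\phi(x_\alpha) \subseteq U$, and in particular $y_\alpha \in \phi(x_\alpha) \subseteq U$. But $y_\alpha \to y$ and $W$ is a neighborhood of $y$, so $y_\alpha$ is eventually in $W$. For large enough $\alpha$ both hold, giving $y_\alpha \in U \cap W = \emptyset$, a contradiction. Hence $y \in \phi(x)$, and since the net was arbitrary, $\mathscr{G}(\phi)$ is closed.

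The main obstacle — or rather the main point requiring care — is the separation step: one must correctly distinguish the two hypotheses and apply the appropriate separation axiom. In the closed-valued case one needs the regularity of $Y$ (separating a closed set from an exterior point), whereas in the compact-valued case one only needs $Y$ Hausdorff but must first establish the auxiliary fact that in a Hausdorff space a compact set and a disjoint point can be separated by disjoint open neighborhoods. Both are standard, but getting the hypotheses matched to the correct separation property is the crux; the remainder is a routine net-chasing argument exploiting that a convergent net is eventually inside every neighborhood of its limit.
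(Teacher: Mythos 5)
Your proof is correct. Note that the paper does not actually prove this statement: Appendix A simply records it as a known result from Aliprantis--Border (the reference \cite{guide}), so there is no in-paper argument to compare against. Your net-chasing contradiction is a valid proof; the separation step is matched correctly to the hypotheses (regularity for closed values, point-versus-compact-set separation in a Hausdorff space for compact values), and the UHC step correctly uses the intended reading of the paper's Definition (the displayed definition contains a typo, ``$U \ni x$'' where it should say that $U$ is a neighborhood of the \emph{set} $\phi(x)$, which is the reading you adopt). The only stylistic remark worth making is that the textbook proof dispenses with nets entirely: given $(x,y) \notin \mathscr G(\phi)$, the same separation plus UHC produces a basic open neighborhood $V \times W$ of $(x,y)$ disjoint from the graph, exhibiting the complement of $\mathscr G(\phi)$ as open directly. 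That version is marginally cleaner, but your argument buys nothing less and loses nothing, since nets characterize closure in arbitrary topological spaces. One degenerate case you pass over silently is $\phi(x)=\emptyset$, where the separation axioms as usually stated do not literally apply; there one takes $U=\emptyset$ and $W=Y$ and the contradiction is immediate, and in any event the correspondence used in the body of the paper is nonempty-valued.
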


The next theorem is a subtle variation on the previous sequential criteria for upper-hemicontinuity. 
\begin{theorem}\label{sequentialcrit}
Assume that a topological space $X$ is first countable and  $Y$ is metrizable. Then for a correspondence $\phi: X \twoheadrightarrow Y$  and a point $x \in X$ TFAE:
\begin{itemize}
\item $\phi$ is UHC at $x$ and $\phi(x) \subset \subset Y$.
\item If a sequence $\{(x_n,y_n)\}$ in $\mathscr G(\phi)$ satisfies $x_n \to x$ then $\{y_n\}$ has a limit point in $\phi(x)$. 
\end{itemize}
\end{theorem}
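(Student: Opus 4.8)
The plan is to prove both implications by elementary point‑set topology, using only a metric $d$ that induces the topology of $Y$ together with a countable (and, after passing to finite intersections, decreasing) neighborhood base at $x$ in $X$. This is the sequential specialization of the net‑theoretic characterization of upper hemicontinuity from \cite{guide}; throughout I will use that, since $Y$ is metrizable, ``limit point of a sequence'' and ``limit of a subsequence'' coincide.

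For the first bullet $\Rightarrow$ the second: fix such a metric $d$, and for $\varepsilon>0$ set $U_\varepsilon=\{y\in Y:\operatorname{dist}(y,\phi(x))<\varepsilon\}$, which is open and contains the compact set $\phi(x)$. Upper hemicontinuity at $x$ yields a neighborhood $V$ of $x$ with $\phi(z)\subseteq U_\varepsilon$ for all $z\in V$; since $x_n\to x$, eventually $x_n\in V$, hence eventually $y_n\in\phi(x_n)\subseteq U_\varepsilon$, so $\operatorname{dist}(y_n,\phi(x))\to 0$. Choose $w_n\in\phi(x)$ realizing (or nearly realizing, up to $1/n$) this distance; compactness of $\phi(x)$ then gives a subsequence $w_{n_k}\to w\in\phi(x)$, and the triangle inequality forces $y_{n_k}\to w$. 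Thus $\{y_n\}$ has a limit point in $\phi(x)$.

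For the converse, the second bullet $\Rightarrow$ the first: first, $\phi(x)$ is compact, since any sequence in $\phi(x)$, paired with the constant sequence $x_n\equiv x$, satisfies the hypothesis and hence has a convergent subsequence with limit in $\phi(x)$, so $\phi(x)$ is sequentially compact, hence compact as a subset of a metric space. To obtain UHC at $x$, argue by contradiction: if it fails, there is an open $U\supseteq\phi(x)$ with $\phi(V)\not\subseteq U$ for every neighborhood $V$ of $x$. By first countability, take a decreasing neighborhood base $V_1\supseteq V_2\supseteq\cdots$ at $x$, and for each $n$ pick $x_n\in V_n$ and $y_n\in\phi(x_n)$ with $y_n\notin U$. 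Then $x_n\to x$, so by hypothesis $\{y_n\}$ has a limit point $w\in\phi(x)\subseteq U$; but $Y\setminus U$ is closed and contains every $y_n$, hence contains $w$, a contradiction. So $\phi$ is UHC at $x$ with $\phi(x)$ compact, which is the content of $\phi(x)\subset\subset Y$.

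Every step is routine; the only mild care‑points are arranging the neighborhood base to be decreasing (immediate from any countable base by finite intersections), the legitimacy of the (approximate) nearest‑point selection $w_n$ (guaranteed by compactness of $\phi(x)$, with the $1/n$ slack as a fallback), and the repeated identification of cluster points with subsequential limits, valid since $Y$ is metrizable. I do not expect a genuine obstacle here: the statement is a standard repackaging of the general correspondence theory in the first‑countable/metrizable setting, where sequences suffice in place of nets.
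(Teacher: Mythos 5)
Your proof is correct and complete; note that the paper does not prove this statement at all --- it is quoted verbatim from \cite{guide} in Appendix A --- and your argument is precisely the standard textbook proof (approximate nearest-point selection plus compactness of $\phi(x)$ in one direction, a decreasing neighborhood base and a closed complement in the other). The only caveat is that you correctly use the standard definition of upper hemicontinuity (``for every open $U\supseteq\phi(x)$ there is a neighborhood $V$ of $x$ with $\phi(V)\subseteq U$''), whereas the paper's stated definition contains a typo ($U\ni x$ instead of $U\supseteq\phi(x)$); your reading is the intended one, without which the theorem would be false.
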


Finally, we are in a position to state the multi-valued fixed point theorem employed in our constructions above, the {\em Bohnenblust-Karlin} theorem. Historically, this theorem has been considered as the multi-valued version of the Schauder fixed point theorem. Let us note that fixed point for a correspondence $\phi: X \twoheadrightarrow X$ is simply a point $x \in X$ so that $x \in \phi(x)$.

\begin{theorem}[Bohnenblust-Karlin]\label{BK}
Let $X$ be a nonempty closed convex subset of a locally Hausdorff space, and let $\varphi: X \twoheadrightarrow X$ be a correspondence with closed graph and nonempty convex values. If the range of $\varphi$ is relatively compact (or equivalently, if it is included in a compact set), then the set of fixed points of $\varphi$ is nonempty and compact. 
\end{theorem}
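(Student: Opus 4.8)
The plan is to follow the classical route to Kakutani-type fixed point theorems in a locally convex space: first reduce the domain to a compact convex set, then manufacture approximate fixed points by pushing $\varphi$ through finite-dimensional ``approximate identities'' and invoking Brouwer, and finally extract a genuine fixed point using compactness together with the closed-graph hypothesis. \emph{Step 1 (reduction to a compact convex domain).} Since $\varphi(X)$ is relatively compact and $X$ is closed and convex, the set $K := \overline{\mathrm{co}}\,\varphi(X)$ is a closed convex subset of $X$ which is moreover compact (the closed convex hull of a compact set is compact in the ambient space here). Because $\varphi(K)\subseteq\varphi(X)\subseteq K$, the restriction $\varphi|_K:K\twoheadrightarrow K$ still has closed graph and nonempty convex values, and every fixed point of $\varphi|_K$ is a fixed point of $\varphi$; so we may assume $X=K$ is nonempty, compact and convex. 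Then $\mathscr G(\varphi)\subseteq K\times K$ is compact, each $\varphi(x)$ is a closed — hence compact — subset of $K$, and $\varphi$ is UHC: if $O$ is open with $\varphi(x)\subseteq O$, the first-factor projection of the compact set $\mathscr G(\varphi)\cap\big(K\times(K\setminus O)\big)$ is closed and misses $x$, and its complement is the required neighborhood. Thus $\varphi$ is a UHC correspondence with nonempty compact convex values.

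\emph{Step 2 (approximate fixed points via Brouwer).} Let $\mathcal N$ be a base at $0$ of convex symmetric open neighborhoods, directed by reverse inclusion. Fix $U\in\mathcal N$. By compactness pick $x_1,\dots,x_n\in K$ with $K\subseteq\bigcup_i(x_i+U)$, a continuous partition of unity $\{\beta_i\}$ subordinate to this cover, and put $K_U:=\mathrm{co}\{x_1,\dots,x_n\}\subseteq K$, a finite-dimensional compact convex polytope. The affine map $p_U:K\to K_U$, $p_U(x):=\sum_i\beta_i(x)\,x_i$, satisfies $p_U(x)-x\in U$ for all $x\in K$ (for active $i$ one has $x_i-x\in -U=U$, and $U$ is convex). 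Consider $\varphi_U:=p_U\circ\varphi:K_U\twoheadrightarrow K_U$: it is UHC (composition of UHC and continuous) with nonempty convex values, since $p_U$ is affine and $\varphi$ has such values. Applying the finite-dimensional Kakutani theorem to $\varphi_U$ on $K_U$ — equivalently, approximating $\varphi_U$ by continuous single-valued maps on a fine simplicial subdivision and invoking Brouwer's theorem — yields $z_U\in K_U$ with $z_U\in\varphi_U(z_U)=p_U(\varphi(z_U))$, i.e. $z_U=p_U(w_U)$ for some $w_U\in\varphi(z_U)$, whence $z_U-w_U=p_U(w_U)-w_U\in U$.

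\emph{Step 3 (limit and the fixed-point set).} The net $(z_U)_{U\in\mathcal N}$ lies in the compact set $K$, so a subnet converges to some $z\in K$; along this subnet $z_U-w_U\to 0$ (eventually $z_U-w_U\in V$ for any $V\in\mathcal N$), hence $w_U\to z$ as well. Since $(z_U,w_U)\in\mathscr G(\varphi)$ and $\mathscr G(\varphi)$ is closed, $(z,z)\in\mathscr G(\varphi)$, i.e. $z\in\varphi(z)$; thus the fixed-point set is nonempty. Finally, with $\delta:X\to X\times X$, $\delta(x)=(x,x)$ continuous, the fixed-point set equals $\delta^{-1}(\mathscr G(\varphi))$, hence is closed; being contained in $\varphi(X)$, which is relatively compact, it is a closed subset of a compact set, so it is compact.

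\emph{Main obstacle.} The technical heart is the finite-dimensional step invoked in Step 2: producing, for a UHC convex-valued correspondence on a finite-dimensional compact convex set, a continuous single-valued approximation whose Brouwer fixed point is ``almost'' fixed for the correspondence — this is precisely where upper hemicontinuity, convexity of the values, and finite dimension are used jointly. A secondary point is the claim in Step 1 that $\overline{\mathrm{co}}\,\varphi(X)$ is compact: automatic in the Banach (here Hilbert) setting of the application, but in a general locally convex Hausdorff space this needs quasi-completeness of the ambient space.
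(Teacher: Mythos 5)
The paper does not prove this theorem at all: it is quoted verbatim (up to a typo --- ``locally Hausdorff'' should read ``locally convex Hausdorff'') from Aliprantis--Border \cite{guide} and used as a black box in the proof of Theorem \ref{th:supporting}. Your proposal supplies the standard Fan--Glicksberg-style proof, and it is essentially correct: the reduction to $K=\overline{\mathrm{co}}\,\varphi(X)$ is legitimate ($K\subseteq X$ by closedness and convexity of $X$, and $\varphi(K)\subseteq\varphi(X)\subseteq K$); the derivation of upper hemicontinuity from closed graph plus compact range via the projection of $\mathscr G(\varphi)\cap\big(K\times(K\setminus O)\big)$ is the right argument; the Schauder-type projection $p_U$ with $p_U(x)-x\in U$ is correctly constructed; and the closed-graph limit argument in Step~3, together with the observation that the fixed-point set is $\delta^{-1}(\mathscr G(\varphi))$ intersected with a compact set, correctly yields nonemptiness and compactness. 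The two caveats you flag yourself are the genuine ones. First, compactness of $\overline{\mathrm{co}}$ of a compact set requires quasi-completeness of the ambient locally convex space; this is automatic in the paper's application, where the ambient space is the Hilbert space $L^2(0,T;L^2(\Omega))$, and it is also implicitly assumed in the textbook statement. Second, you invoke the finite-dimensional Kakutani theorem rather than proving it; since that is exactly the same kind of citation the paper itself makes for the whole theorem, this is acceptable, though a fully self-contained argument would need the simplicial-approximation-plus-Brouwer step spelled out. In short: correct proof of a result the paper only cites, following the classical route one would find in \cite{guide}.
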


\section{Appendix B: Galerkin Construction for Linear Problem}\label{app2}
Proof of Lemma \ref{F_welldefined}:
\begin{proof} 
Due to Assumption 1.1 on the permeability operator $k$, the following Proposition is immediate.
\begin{proposition}\label{AssumpA} The bilinear form $A[\cdot, \cdot;z(t)]$ satisfies the following properties:
\begin{enumerate}
\item Continuity: $\exists M > 0$ s.t.
$|A(w_1, w_2;z(t))| \leq M \|w_1\|_{V} \|w_2\|_{V}, \ \ \forall w_1, w_2 \in V$, \ a.e. in\  \ $[0,T]$.
\item Coercivity: $A(w,w; z(t))  \geq k_1 \|w\|^2_{V}$, for all $w \in V$. 
\end{enumerate}
\end{proposition}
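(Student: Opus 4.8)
Both assertions are direct consequences of the two-sided bound in Assumption \ref{Assumpk} together with the fact (stated in Section 2.1) that the norm on $V$ is the gradient norm, $\|v\|_V = \|\nabla v\|_{L^2(\Omega)}$, so no appeal to Poincar\'e is even needed here. The only preliminary point to record is that, by the superposition-operator theory invoked after Assumption \ref{Assumpk}, for $z \in L^2(0,T;L^2(\Omega))$ the function $\mathbf x \mapsto k(z(\mathbf x,t))$ is a well-defined measurable function for a.e.\ $t \in [0,T]$, and since $k$ is bounded above and below \emph{everywhere} on $\mathbb R$, we in fact have $0 < k_1 \le k(z(\mathbf x,t)) \le k_2$ for a.e.\ $\mathbf x$ and a.e.\ $t$ --- there are no exceptional null sets beyond the one on which $z(t)$ itself is undefined. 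Thus all estimates below hold pointwise in $\Omega$ and for a.e.\ $t$.

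For continuity, fix such a $t$ and let $w_1,w_2 \in V$. Then, writing the Frobenius inner product and applying the pointwise upper bound followed by the Cauchy--Schwarz inequality in $L^2(\Omega)$,
\begin{equation*}
|A(w_1,w_2;z(t))| = \left| \int_\Omega k(z(t))\, \nabla w_1 \cdot \nabla w_2 \, d\Omega \right| \le k_2 \int_\Omega |\nabla w_1|\,|\nabla w_2| \, d\Omega \le k_2 \, \|\nabla w_1\|_{L^2(\Omega)} \|\nabla w_2\|_{L^2(\Omega)} = k_2\, \|w_1\|_V \|w_2\|_V,
\end{equation*}
so property (1) holds with $M = k_2$. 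For coercivity, take $w \in V$ and use the pointwise lower bound $k(z(t)) \ge k_1$:
\begin{equation*}
A(w,w;z(t)) = \int_\Omega k(z(t))\, |\nabla w|^2 \, d\Omega \ge k_1 \int_\Omega |\nabla w|^2 \, d\Omega = k_1 \|\nabla w\|_{L^2(\Omega)}^2 = k_1 \|w\|_V^2,
\end{equation*}
which is property (2). This completes the proof.

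\textbf{On the main obstacle.} There is essentially no technical difficulty: the statement is a bookkeeping consequence of Assumption \ref{Assumpk}. The only subtlety worth a sentence --- and the one I would be careful to state explicitly --- is the measurability/a.e.-in-time issue, i.e.\ that $k(z(\cdot,t))$ makes sense as an $L^\infty(\Omega)$ function for a.e.\ $t$ when $z$ is merely in $L^2(0,T;L^2(\Omega))$, which is exactly what the cited Nemytskii (superposition) operator theory \cite{RR,Tr} provides, and which is already recorded in the remark following Assumption \ref{Assumpk}. Everything else is Cauchy--Schwarz and the pointwise bounds on $k$.
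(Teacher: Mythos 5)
Your proof is correct and is exactly the argument the paper has in mind: the paper simply declares the proposition ``immediate'' from Assumption \ref{Assumpk}, and your write-up supplies the standard details (pointwise bounds on $k$ plus Cauchy--Schwarz, with $M=k_2$ and coercivity constant $k_1$). The remark on the Nemytskii operator making $k(z(\cdot,t))$ a legitimate $L^\infty(\Omega)$ multiplier for a.e.\ $t$ is a reasonable point to make explicit, though the paper leaves it implicit.
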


\noindent \underline{Construction of Approximate Solution:}
We use Galerkin approximations. 
Let $\{w_k(x)\}_{k=1}^{\infty}$ be an orthogonal basis of $V$, and an orthonormal basis in $L^2(\Omega)$. (For example, we can take $\{w_k(x)\}_{k=1}^{\infty}$ to be the complete set of appropriately normalized eigenfunctions for $-\Delta$ in $V$.) 
Let $V_n = \text{span}\{w_1, ... w_n\}$. Note that $V_n$ satisfies the conditions $V_n \subset V_{n+1}$ and $\overline{\cup V_n} = V$. 
We look for solutions of the form:
\begin{equation}\label{pninVn}
p_n(t) = \sum_{k=1}^n d^k_n(t) w_k,
\end{equation}
 where the coefficients $d^k_n(t) \in H^1(0, T)$ for $k = 1, ..., n$. Thus we consider the following finite dimensional problem on $V_n$: 
 
 Determine $p_n \in H^1(0, T; V)$ such that for every $k = {1,2,..,n}$, 
\begin{equation}
\begin{cases}
([c_0\mathbf I  + B]p'_n, w_k)_{L^2(\Omega)} + A[p_n, w_k; z(\cdot)] = \langle S, w_k\rangle, \ \ \text{a.e. in} \ (0,T), \label{Galerkin1}\\
d^k_n(0) = ([c_0\mathbf I  + B]^{-1}d_0, w_k)_{\Omega}, k = 1,2,...,n.
\end{cases}
\end{equation}
If the differential equation in \eqref{Galerkin1} holds for each element of the basis $w_k$, with $ k = 1,2,...,n$, then it also holds for every $w \in V_n$. Moreover, since $(c_0 \mathbf I  + B)p'_n \in L^2(0,T; L^2(\Omega))$, we have from Remark \ref{equivvarforms} 
$$((c_0 \mathbf I + B)p'_n(t), w)_{\Omega} = \langle (c_0\mathbf I  + B)p'_n(t), w\rangle$$
Upon expanding $p_n$,  (\ref{Galerkin1}) becomes 
\begin{equation}
\begin{cases}\ds
M (d^k_n(t))' + \sum_{k=1}^n A[w_l, w_k; z(t)] d^l_n (t) = S^k(t), \    \label{Galerkin2}\\
d^k_n(0) = ([c_0\mathbf I+B]^{-1}d_0, w_k), ~~k = 1,2,...,n,
\end{cases}
\end{equation}
where $$M = ([c_0\mathbf I  + B]w_k, w_k)_{\Omega}, ~\text{ and }~ S^k(t) =  \langle S(t), w_k\rangle,~~  k = 1,2,...,n.$$

Since $(c_0 \mathbf I + B)$ is invertible on $L^2(\Omega)$, we have that $\{[c_0\mathbf I+B]w_k\}_{k=1}^{\infty}$ is linearly independent in $L^2(\Omega)$. Therefore we can find a permutation $\alpha(i)$ of the basis $\{w_k\}$ such that for all $m \in \mN$, the matrix $\ds \big\{\big([c_0\mathbf I+B]w_j, w_{\alpha(i)}\big)_{\Omega}\big\}_{i,j=1}^m$ is nonsingular (see Lemma 2.3 in \cite{owc}).
\begin{remark} We note here that in order to construct solutions invoking ODE theory and obtain the subsequent energy estimates below, we {\em require} the initial condition $d_0 \in L^2(\Omega)$; if $d_0 \in V'$, additional information about the continuity, adjoint, and invertibility of $B$ on $V'$ would be needed.\end{remark}

By standard existence theory for ordinary differential equations, there exists a unique, absolutely continuous function $d_n(t) = [ d^k_n(t)]_{k=1}^n$ that solves (\ref{Galerkin2}). Therefore  $p_n(t) \in H^1(0, T; V)$ defined in (\ref{pninVn}) is a solution for (\ref{Galerkin1}) for a.e. $t \in [0,T]$. \\

\noindent \underline{Energy Estimates:} We can interpret \eqref{varform1} a.e. $s \in [0,T]$ and let $ q = p_n \in H^1(0, T; V)$ in (\ref{varform1}) to obtain  
$$\langle[c_0\mathbf I  + B]p'_n(s), p_n(s)\rangle+  A[p_n(s), p_n(s);z(s)] = \langle S(s), p_n(s)\rangle$$
Due to the fact that $B$ is self-adjoint on $L^2(\Omega)$ and $p_n(t) \in H^1(0, T; V)$, we have that 
$$\langle[c_0 \mathbf I  + B]p'_n(s), p_n(s) \rangle = \frac{1}{2}\frac{d}{ds}\big([c_0\mathbf I  + B]p_n(s),p_n(s)\big)_{\Omega}$$
Moreover, with $k_1$ as the lower bound on $k$, i.e., the coercivity parameter for $A$ in Remark (\ref{AssumpA}), we have
$$| \langle S(s), p_n(s)\rangle | \leq \frac{1}{2k_1}\|S(s)\|^2_{V'} + \frac{k_1}{2} \|p_n(s)\|^2_V$$
Thus, with the coercivity assumed in Remark (\ref{AssumpA}), we obtain
$$\frac{1}{2}\frac{d}{ds}\big([c_0\mathbf I  + B]p_n(s),p_n(s)\big)_{\Omega} + \frac{k_1}{2}\|p_n(s)\|^2_V \leq \frac{1}{2k_1}\|S(s)\|^2_{V'}$$
We integrate over $(0,t)$ and obtain
$$\big([c_0 \mathbf I + B]p_n(t),p_n(t)\big)_{\Omega} + k_1 \int_0^t \|p_n(s)\|^2_V \ ds \leq  ([c_0\mathbf I  + B]p_n(0),p_n(0))_{L^2(\Omega)} + \frac{1}{k_1}\int_0^t \|S(s)\|^2_{V'} \ ds$$

Using the properties of the operator $B$ and $B^{1/2}$ (as in Lemma \ref{posop} and the discussion following it), we obtain from this estimates point wise (in time) control of $||p_n(t)||_{L^2(\Omega)}$ and $||B^{1/2}p_n(t)||_{L^2(\Omega)}$ for each $t \in [0,T]$, as well as 
$$p_n,\ B^{1/2}p_n,\ (c_0  + B)^{1/2}p_n \in L^{\infty}(0,T;L^2(\Omega)), \ \ p_n \in L^2(0,T;V).$$ Thus 
$$\|p_n\|_{L^{\infty}(0,T;L^2(\Omega))}^2+||B^{1/2}p||_{L^{\infty}(0,T;L^2(\Omega))}^2 \leq (d_0, ([c_0\mathbf I  + B]^{-1}d_0)_{\Omega} + \frac{1}{k_1}\int_0^t \|S(s)\|^2_{V'} \ ds$$
and 
$$\|p_n\|_{L^2(0,T;V)}^2 \leq (d_0, [c_0\mathbf I  + B]^{-1}d_0)_{\Omega} + \frac{1}{k_1}\int_0^t \|S(s)\|^2_{V'} \ ds$$
Since $B$ is continuous from $V$ into $H^1(\Om)$, we obtain that $Bp_n \in L^2(0,T;H^1(\Om))$, and thus we have  
\begin{equation}\label{estimatezetan}
\|(c_0\mathbf I  + B)p_n(t)\|_{L^2(0,T;H^1(\Om))} \leq C \|p_n(t)\|_{L^2(0,T;V)}
\end{equation}

Now, directly from the (\ref{varform1'}), using the characterization of the norm in $V'=H^{-1}(\Omega)$, we obtain
$$\|[(c_0 \mathbf I + B)p_n]'(s)\|_{V'} \leq M \|p_n(s)\|_{V} + \|S(s)\|_{V'},$$
which implies that
$$[(c_0 \mathbf I  + B)p_n]' \in  L^2(0,T;V')$$ with   
$$\int_0^t \|[(c_0 \mathbf I  + B)p_n]'(s)\|^2_{V'} \leq 2M^2  \int_0^t \|p_n(s)\|^2_{V} + \int_0^t  \|S(s)\|^2_{V'} \lesssim (d, [c_0\mathbf I  + B]^{-1}d)_{\Omega} + \int_0^t  \|S(s)\|^2_{V'} $$

\medskip

\noindent \underline{Existence:} 
Since $\{p_n\}$ is bounded in $L^2(0,T;V)$, we can extract a weakly convergent subsequence $p_{n_k}$. If we call the weak limit $p$,  then we have that
\begin{equation}\label{weakconvgpn}
p_{n_k} \rightharpoonup p \ \ \text{in}\ \ L^2(0,T;V)
\end{equation}
Using the continuity of the operator $B: V \to H^1(\Omega)$, we obtain that 
\begin{equation}\label{weakconvgBpn}
(c_0 \mathbf I + B)p_{n_k} \rightharpoonup (c_0 \mathbf I + B)p \ \ \text{in}\ \ L^2(0,T;H^1(\Om))
\end{equation}
According to the energy estimates above, we have that the subsequence $\{[(c_0+B)p_{n_k}]'\}$ is bounded in $L^2(0,T;V')$. Consequently, we obtain on a new subsequence (retaining the subscript $n_k$) that
\begin{equation}\label{weakconvgBpnprime}
[(c_0+B)p_{n_k}]'  \rightharpoonup [(c_0 + B)p]' \ \ \text{in}\ \  L^2(0,T;V') 
\end{equation}
Now invoking \eqref{varform1} we can write 
\begin{equation}\label{varformforsubseq}
\int_0^T \langle [(c_0\mathbf I+B)p_{n_k}]'(t), q(t) \rangle \ dt + \int_0^T A[p_{n_k}(t), q(t);z(t)]\ dt = \int_0^T \langle S, q \rangle  \ dt
\end{equation}
for every $q \in L^2(0,T; V_{n_k})$. Choose $N$ such that $N \leq n_k$. In \eqref{varformforsubseq},  let $q = w\varphi$, with 
$w \in V_N$ and $\varphi \in \cD(0,T)$, and let $n_k \to \infty$. Thanks to \eqref{weakconvgpn} and \eqref{weakconvgBpnprime} and the continuity  of the bilinear form $A$  we infer that 
\begin{equation}\label{varformforlimitofsubseq}
\int_0^T \Big\{\big\langle[(c_0\mathbf I + B)p]'(t), w\big\rangle + A[p(t), w;z(t)] - \langle S(t), w\rangle\Big\}\varphi(t) \ dt = 0
\end{equation}
Letting $N \to \infty$ and using the fact that $\varphi$ is arbitrary, we obtain that 
$$\big\langle [(c_0\mathbf I + B)p]' (t), w\big\rangle + A[p(t), w;z(t)] = \langle S(t), w\rangle , \ \text{for a.e.}\ t \in (0,T), \ \text{and for all} \ w \in V,$$ from which \eqref{varform1} follows. 

It remains to check that $p$ satisfies the initial condition $[c_0\mathbf I+B]p(0) = d_0$. We use \eqref{varformforlimitofsubseq} with $\varphi \in C^1([0,T])$ that satisfies $\varphi(0) = 1$ and $\varphi(T) =0$, and integrate by parts in the fist term. We obtain
\begin{equation}\label{recoverIC1}
\int_0^T\Big \{-\big([c_0\mathbf I + B)p(t), w\big)_{\Omega}\varphi'(t) + A[p(t), w;z(t)]\varphi(t) - \langle S(t), w\rangle \varphi(t)\Big\} dt = ([c_0\mathbf I+ B]p(0), w)_{\Omega}\end{equation}
Similarly, we use $q(t) = \varphi(t) w$ with $w \in V_n$ in \eqref{varformforsubseq}, and integrate by parts in the first term. We obtain 
\begin{equation}\label{recoverIC2}
\int_0^T\Big \{-\big([c_0\mathbf I+B]p_{n_k}(t), w)_{\Omega}\varphi'(t) + A[p_{n_k}(t), w;z(t)]\varphi(t) - \langle S(t), w\rangle \varphi(t)\Big\} dt = ([c_0\mathbf I+B]p_{n_k}(0), w)_{\Omega}\end{equation}

If we let $n_k \to \infty$ in \eqref{recoverIC2}, the LHS converges to the LHS of \eqref{recoverIC1} due to \eqref{weakconvgBpn}, and the RHS $((c_0\mathbf I+B)p_{n_k}(0), w)_{\Omega} \to (d_0,w)_{\Omega}$. Therefore we obtain that $((c_0\mathbf I+B)p(0), w)_{\Omega} = 
(d_0,w)_{\Omega}$, and using the density of $V$ into $\Omega$ we have that $[c_0\mathbf I+B]p(0) = d_0$ as desired. \\

Finally, we also note that from  \eqref{weakconvgBpn} and \eqref{weakconvgBpnprime} we obtain that 
\begin{equation*}\label{weakconvgBpn*}
(c_0\mathbf I  + B)p_{n_k} \to (c_0\mathbf I  + B)p \ \ \text{in}\ \ L^2(0,T;L^2(\Om))
\end{equation*}
\end{proof}
Note that, through the limit point construction, we obtain the estimates in \eqref{estsfollow} on the constructed solutions by the weak lower semicontinuity of the norm.

\small 

\end{document}